\newcommand{\supp}{\mathop{\mathrm{supp}}}
\newcommand{\diag}{\mathop{\mathrm{diag}}}
\theoremstyle{plain}% default
\newtheorem{thm}{Theorem}[section]
\newtheorem{prop}[thm]{Proposition}
\theoremstyle{definition}
\newtheorem{defn}{Definition}[section]
\newtheorem{exmp}{Example}[section]
\theoremstyle{remark}
\newtheorem{rem}[thm]{Remark}
\begin{document}

\title{
%Characterization of H\"older-Zygmund spaces\\ via non-shift-invariant tight frames
Optimal H\"older-Zygmund exponent of semi-regular refinable functions
}

\author[univie]{Maria Charina}
\ead{maria.charina@univie.ac.at}

\author[unifi]{Costanza Conti}
\ead{costanza.conti@unifi.it}

\author[unimib]{Lucia Romani}
\ead{lucia.romani@unimib.it}

\author[TU]{Joachim St\"ockler}
\ead{joachim.stoeckler@math.tu-dortmund.de}

\author[unimib]{Alberto Viscardi}
\ead{alberto.viscardi@unimib.it}

\address[univie]{Fakult\"at f\"ur Mathematik, Universit\"at Wien, Oskar-Morgenstern-Platz 1, 1090 Wien, Austria}
\address[unifi]{Dipartimento di Ingegneria Industriale, Universit\`a degli Studi di Firenze, viale Morgagni 40/44, 50134 Firenze, Italy}
\address[unimib]{Dipartimento di Matematica e Applicazioni, Universit\`a degli Studi di Milano-Bicocca, via Roberto Cozzi 55, 20126 Milano, Italy}
\address[TU]{Institut f\"ur Angewandte Mathematik, TU Dortmund, Vogelpothsweg 87, D-44227 Dortmund, Germany}

\date{}

\begin{abstract}
The regularity of refinable functions has been investigated deeply in the past 25 years using Fourier analysis, wavelet analysis, 
restricted and joint spectral radii techniques. However the shift-invariance of the underlying regular setting is crucial for these
approaches. We propose an efficient method based on wavelet tight frame decomposition techniques for estimating 
H\"older-Zygmund regularity of univariate semi-regular refinable functions generated, e.g., by subdivision schemes defined on semi-regular meshes
$\mathbf{t}\;=\;-h_\ell\mathbb{N}\cup\{0\}\cup h_r\mathbb{N}$, $h_\ell,h_r \in (0,\infty)$.
To ensure the optimality of this method, we provide a new characterization of H\"older-Zygmund spaces based on suitable irregular 
wavelet tight frames. Furthermore, we present proper tools for computing the corresponding frame coefficients in the semi-regular 
setting. We also propose a new numerical approach for estimating the optimal H\"older-Zygmund exponent of refinable functions which is more
efficient than the linear regression method. We illustrate our results with
several examples of known and new semi-regular subdivision schemes with a potential use in blending curve design. 

\begin{keyword}
wavelet tight frames \sep semi-regular refinement \sep Dubuc-Deslauriers frames \sep H\"older-Zygmund regularity
\end{keyword}

\bigskip
\noindent{\bf Classification (MSCS):  42C40, 42C15, 65D17}

\end{abstract}

\maketitle

%%%%%%%%%%%%%%%%%%%%%%%%%%%%%%%%%%%%%%%%%%%%%%%%%%%%%%%%%%%%%%%%%%%%%%%%%%%%
\section{Introduction and notation} \label{sec:intro}
%%%%%%%%%%%%%%%%%%%%%%%%%%%%%%%%%%%%%%%%%%%%%%%%%%%%%%%%%%%%%%%%%%%%%%%%%%%%

This paper presents a fast and efficient method for computing the optimal (critical) H\"older-Zygmund regularity of a certain class of non-shift-invariant univariate 
refinable functions, the so-called semi-regular refinable functions generated e.g. by binary subdivision
\cite{MR1687779,MR1356994} defined on the meshes
\begin{equation}\label{eq:mesh}
  {\mathbf t}\;=\;-h_\ell\mathbb{N}\cup\{0\}\cup h_r\mathbb{N},\quad h_\ell,h_r \in (0,\infty).
\end{equation}
It is well known that a family $\{\phi_{k}\,:\,k\in\mathbb{Z}\}$ of refinable functions $\phi_{k} \in L_2(\mathbb{R})$ assembled in 
a bi-infinite column vector $\Phi=[\phi_{k}\,:\,k\in\mathbb{Z}]$ satisfies the refinement equation
\begin{equation} \label{eq:ref}
   \Phi = \mathbf{P}\, \Phi(2 \cdot)
\end{equation}
with a real-valued bi-infinite matrix $\mathbf{P}$. In the semi-regular case, finitely many (corresponding to a certain 
neighborhood of the origin) of the elements in $\Phi$ can not be expressed as integer shifts of any other function in 
$\Phi$ and \eqref{eq:ref} reduces to finitely many different scalar-valued refinement equations.
The assumption \eqref{eq:mesh} on the mesh becomes vital only in Section \ref{sec:num}.

Our method relies on a new characterization of H\"older-Zygmund spaces.  It generalizes successful wavelet frame methods 
\cite{MR3495345, MR1150048, MR2683008, MR1162107, MR1083586, MR1228209, MR2948962, MR1289147} from the regular to the semi-regular and even to the irregular setting and is the first step towards a better understanding of regularity at extraordinary vertices 
\cite{MR2415757, Warren:2001:SMG:580358} in the bivariate case. In comparison to the method in \cite{MR1687779},  our approach yields numerical estimates for the optimal H\"older-Zygmund regularity of a refinable function without requiring any ad hoc regularity estimates for the corresponding subdivision scheme. Our numerical estimates turn out to be optimal in all considered cases and require fewer computational steps than the standard linear regression method.

In the regular case, the wavelet frame methods rely on the characterization of Besov spaces
$B^{r}_{p,q}(\mathbb{R})$ provided by Lemari\'e and Meyer \cite{MR864650} in their follow-up on the results by Frazier and Jawerth \cite{MR808825}.

\begin{thm}[\cite{MR1228209}, Section 6.10] \label{thm:wave_char}
Let $s>0$ and $1\leq p,q\leq\infty$.
Assume
$$
 \big\{\phi_k=\phi_0(\cdot-k) \ : \ k\in\mathbb{Z} \big\} \cup \big\{ \psi_{j,k}=2^{(j-1)/2}\psi_{1,0}(2^{j-1}\cdot-k) \ : \ k\in\mathbb{Z}, j\in\mathbb{N}\big\} \subset \mathcal{C}^s(\mathbb{R})
$$
is a compactly supported orthogonal wavelet system with $v$ vanishing moments.  Then, for $r \in (0, \min(s,v))$,
\[
 B^{r}_{p,q}(\mathbb{R}) \;=\; \left\{\; \sum_{k\in\mathbb{Z}}a_k\phi_k+\sum_{j\in\mathbb{N}}\sum_{k\in\mathbb{Z}}b_{j,k}\psi_{j,k}\;: \; \{a_k\}_{k\in\mathbb{Z}} \in \ell_p(\mathbb{Z}), \ \left\{2^{j\left(r+\frac{1}{2}-\frac{1}{p}\right)}\left\|\{b_{j,k}\}_{k\in\mathbb{Z}}\right\|_{\ell_p} \;\right\}_{j\in\mathbb{N}} \in \ell_q(\mathbb{Z}) \right\}.
\]
\end{thm}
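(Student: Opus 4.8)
The plan is to recast the claimed identity as the simultaneous boundedness of the wavelet \emph{analysis} map $f\mapsto\big(\langle f,\phi_k\rangle,\langle f,\psi_{j,k}\rangle\big)$ and the wavelet \emph{synthesis} map $\big((a_k),(b_{j,k})\big)\mapsto\sum_k a_k\phi_k+\sum_{j\geq1}\sum_k b_{j,k}\psi_{j,k}$ between $B^{r}_{p,q}(\mathbb{R})$ and the discrete Besov space
\[
  b^{r}_{p,q}\;=\;\Big\{\big((a_k),(b_{j,k})\big)\ :\ (a_k)_k\in\ell_p(\mathbb{Z}),\ \big\{2^{j(r+\frac12-\frac1p)}\|(b_{j,k})_k\|_{\ell_p}\big\}_{j\in\mathbb{N}}\in\ell_q\Big\}.
\]
Since $\{\phi_k\}\cup\{\psi_{j,k}\}$ is an orthonormal basis of $L_2(\mathbb{R})$, the two maps are mutually inverse on the dense subspace of finite expansions, so once both are shown to be bounded the asserted set equality and the norm equivalence follow. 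Throughout I would work with the Littlewood--Paley description of the Besov norm (equivalent to the modulus-of-smoothness definition): fix a smooth dyadic resolution of unity $(\Delta_{j})_{j\geq0}$ on the Fourier side and use $\|f\|_{B^{r}_{p,q}}\asymp\big\|\{2^{jr}\|\Delta_j f\|_{L_p}\}_{j\geq0}\big\|_{\ell_q}$.

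The heart of the matter is a single \emph{almost-diagonal} estimate: for all $j,j'\geq0$ and all $k$,
\[
  \|\Delta_{j'}\psi_{j,k}\|_{L_\infty}\;\lesssim\;2^{j/2}\,2^{-|j-j'|\,\sigma},\qquad \sigma:=\min(s,v),
\]
together with an analogous bound for $\Delta_{j'}\phi_k$ that is relevant only for $j'$ bounded. When $j'\geq j$ this uses that $\psi_{j,k}\in\mathcal{C}^{s}$ has rapidly decaying high-frequency content, so the band-$2^{j'}$ projection $\Delta_{j'}$ captures only the $O(2^{-(j'-j)s})$ tail; when $j'\leq j$ it uses the $v$ vanishing moments of $\psi$ together with the smoothness of the convolution kernel of $\Delta_{j'}$ (Taylor-expand the kernel around the centre of $\supp\psi_{j,k}$ and cancel the low-order terms), which produces the factor $O(2^{-(j-j')v})$. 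The compact support of $\psi_{1,0}$, hence of every $\psi_{j,k}$, makes the sum over $k$ locally finite, so these pointwise bounds upgrade to $\|\Delta_{j'}f_j\|_{L_p}\lesssim 2^{-|j-j'|\sigma}\,2^{j(\frac12-\frac1p)}\|(b_{j,k})_k\|_{\ell_p}$ for $f_j=\sum_k b_{j,k}\psi_{j,k}$.

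Given this, the synthesis direction follows by writing $f=\sum_j f_j$ (with the $\phi$-part absorbed into low frequencies), estimating $\|\Delta_{j'}f\|_{L_p}\leq\sum_j\|\Delta_{j'}f_j\|_{L_p}$, and applying a discrete convolution (Young/Schur) inequality in the scale index; the geometric weights $2^{-|j-j'|\sigma}$ beat the Besov weights $2^{(j'-j)r}$ precisely because $r<\sigma$, which yields $f\in B^{r}_{p,q}$ with the required norm bound. The analysis direction is dual: for $f\in B^{r}_{p,q}$ write $f=\sum_{j'}\Delta_{j'}f$, estimate $|\langle f,\psi_{j,k}\rangle|\leq\sum_{j'}|\langle \Delta_{j'}f,\psi_{j,k}\rangle|=\sum_{j'}|\langle f,\Delta_{j'}\psi_{j,k}\rangle|$ via the same almost-diagonal bound, sum over $j'$ using H\"older's inequality in the $\ell_q$--$\ell_{q'}$ duality and over $k$ using bounded overlap of the supports, and conclude $\big((\langle f,\phi_k\rangle),(\langle f,\psi_{j,k}\rangle)\big)\in b^{r}_{p,q}$. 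The endpoint cases $p=\infty$ or $q=\infty$ need only the usual cosmetic changes (suprema instead of $\ell_p$-sums, and a sequence-valued maximal inequality in place of the raw convolution estimate). I expect the main obstacle to be the uniform derivation of the almost-diagonal estimate with the exact normalization powers of $2$ and with honest control of the possibly non-integer H\"older exponent $s$; the subsequent assembly into Besov norms is then standard Frazier--Jawerth / Lemari\'e--Meyer bookkeeping.
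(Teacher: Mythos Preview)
The paper does not prove this theorem; it is quoted from Meyer's book \cite{MR1228209}, Section~6.10, and used as a black box (most crucially inside the proof of Theorem~\ref{thm:Zygmund}). So there is no ``paper's own proof'' to compare against for this particular statement.

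Your outline is correct and is precisely the standard Frazier--Jawerth/Lemari\'e--Meyer argument that underlies the cited reference: reduce everything to an almost-diagonal estimate $\|\Delta_{j'}\psi_{j,k}\|_{L_\infty}\lesssim 2^{j/2}2^{-|j-j'|\sigma}$ with $\sigma=\min(s,v)$, then push it through a Schur/Young inequality in the scale index. The one place to be careful is exactly the one you flag: getting the honest exponent $\sigma$ when $s$ is non-integer requires a H\"older (rather than Taylor) remainder on the Fourier side, and the constants must be uniform in $k$ via the shift-invariance of the system.

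It may interest you that the paper's proof of its \emph{own} result, Theorem~\ref{thm:main} (the case $p=q=\infty$ for non-shift-invariant tight frames), proceeds rather differently. There the authors avoid the Littlewood--Paley machinery altogether: for synthesis they work directly with first and second differences of $g=\sum_{j,k}b_{j,k}\psi_{j,k}$, splitting the sum over $j$ at the scale $J$ determined by $|h|\asymp 2^{-J}$; for analysis they use Taylor expansion against the vanishing moments (cf.\ \eqref{eq:vm_trick} and \eqref{(18a)}). This is more elementary and survives the loss of shift-invariance, but it does not extend cleanly to general $p,q$. For the delicate integer case $r\in\mathbb{N}$ in Theorem~\ref{thm:Zygmund}, the paper in fact \emph{invokes} the very Theorem~\ref{thm:wave_char} you are proving, via an auxiliary Daubechies system, rather than reproving it.
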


To be able to apply Theorem \ref{thm:wave_char}, i.e. to extract the regularity of a given function $f$ from the decay of its coefficients $\{a_{k}=\langle f,\phi_{k} \rangle\ \,:\, k\in\mathbb{Z}\}$ and $\{b_{j,k}=\langle f,\psi_{j,k}\rangle\,:\,j\in\mathbb{N},\ k\in\mathbb{Z}\}$, one must first compute these inner products. In the context of subdivision, the analytic expressions neither of the analyzed function $f$ nor of the refinable functions $\phi_k$
are usually known. However, in the regular (shift-invariant) setting, the desired inner products can be computed explicitly (or numerically) using results of \cite{MR1302255}.  In the general  non-shift-invariant case, the task becomes
overwhelming and is far from being understood.  In the semi-regular case, however,  both the suitable wavelet
tight frames exist, e.g. the ones generated by B-splines \cite{MR2082157,MR2110512} or by Dubuc-Deslauriers refinable
functions \cite{first_paper}, and, similarly to \cite{MR2692268,first_paper}, the corresponding frame coefficients
can be computed.

However, Theorem \ref{thm:wave_char} does not cover the case of semi-regular wavelet tight frames, since we lose the orthogonality and, most importantly, the shift-invariance. This paper provides a generalization of Theorem \ref{thm:wave_char} for function systems
\begin{equation} \label{def:F}
 \mathcal{F}=\big\{\phi_k \, : \, k\in\mathbb{Z} \big\} \cup \big\{ \psi_{j,k} \, : \, j\in\mathbb{N}, \ k\in\mathbb{Z}\big\}
\end{equation}
with the following properties
\begin{itemize}
\item[\color{black}(I)] $\mathcal{F}$ forms a (Parseval/normalized) tight frame for $L^2(\mathbb{R})$, i.e.
	\begin{equation}\label{eq:TF}
     f\;=\;\sum\limits_{k\in\mathbb{Z}}\;\langle f,\phi_k\rangle \phi_k\;+\;\sum\limits_{j\in\mathbb{N}}\;\sum\limits_{k\in\mathbb{Z}}\;\langle f,\psi_{j,k}\rangle\;\psi_{j,k}, \quad f\in L^2(\mathbb{R});
  \end{equation}
%%%%%%%%%%%%%
\item[\color{black}(II)] there exists a constant $C_{supp}>0$ such that
		\begin{equation}\label{eq:bound_supp} \sup_{k\in\mathbb{Z}}\{|\supp(\phi_k)|\}\;\leq\;C_{supp}\quad\textrm{and}       \quad\sup_{k\in\mathbb{Z}}\{|\supp(\psi_{j,k})|\}\;\leq\;C_{supp}\;2^{-j},\quad  j\in\mathbb{N};
		\end{equation}
%%%%%%%%%%%%%%%%%
\item[\color{black}(III)] there exists a constant $C_\Gamma>0$ such that for every bounded interval $K \subset \mathbb{R}$  the sets
		\begin{eqnarray*}
		 \Gamma_0(K)\;=\;\{\;k\in\mathbb{Z}\;:\;\supp(\phi_k)\cap K\;\not=\;\emptyset\;\}
		\quad \hbox{and} \quad
			\Gamma_j(K)\;=\;\{\;k\in\mathbb{Z}\;:\;\supp(\psi_{j,k})\cap K\;\not=\;\emptyset\;\},\quad j\in\mathbb{N},
		\end{eqnarray*}
		satisfy
		\begin{equation}\label{eq:bound_Gam}
		   |\Gamma_j(K)|\;\leq\;C_\Gamma(2^j|K|+1),\quad  j\geq 0;
		\end{equation}					
%%%%%%%%%%%%%%%%%%%
\item[\color{black}(IV)] $\mathcal{F}$ has $v\in\mathbb{N}$ vanishing moments, i.e.
	\begin{equation}\label{eq:vm}
	   \int_\mathbb{R}\;x^n\;\psi_{j,k}(x)\;dx\;=\; 0,\quad n\in\{0,\dots,v-1\}, \quad  j\in\mathbb{N},
		\quad k \in\mathbb{Z},
	\end{equation}
 and there exists a sequence of points $\{x_{j,k}\,:\, j\in\mathbb{N},\ k\in\mathbb{Z}\}$ such that, 
for every $0\leq r\leq v$, there exists a constant $C_{vm,r}>0$ such that
		\begin{equation}\label{eq:bound_vm}
		   \sup_{k\in\mathbb{Z}}\;\int_{\mathbb{R}}|x|^r|\psi_{j,k}(x+x_{j,k})|dx\;\leq\;C_{vm,r}\;
			 2^{-j\left(r+\frac{1}{   2}\right)};
			\end{equation}		
%%%%%%%%%%%%%%%%%
\item[\color{black}(V)] $\mathcal{F} \subset \mathcal{C}^s(\mathbb{R})$, $s>0$, and for every $0 \leq r\leq s$ there exists a constant $C_{sm,r}>0$ such that
		\begin{equation}\label{eq:bound_norm}
		 \sup_{k\in\mathbb{Z}}\{\|\phi_k\|_{\mathcal{C}^r}\}\;\leq\;C_{sm,r}\quad\textrm{and}\quad\sup_{k\in\mathbb{Z}}     \{\|\psi_{j,k}\|_{\mathcal{C}^r}\}\;\leq\;C_{sm,r}\;2^{j\left(r+\frac{1}{2}\right)},\quad j\in\mathbb{N}.
		\end{equation}
\end{itemize}

%%%%%%%%%%%%%%%%%%%%%%%%%%%%%%%%%%%%%
Estimate \eqref{eq:bound_vm} expresses localization condition for the framelets $\psi_{j,k}$. Note that \eqref{eq:bound_vm} is  implied by conditions (II) and (V) for $0\leq r\leq s$. Indeed, choosing $x_{j,k}$ to be the midpoint of $\supp(\psi_{j,k})$, $j\in\mathbb{N}$, $k\in\mathbb{Z}$, we get
\[
\int_\mathbb{R}\;|x|^r\;|\psi_{j,k}(x+x_{j,k})|dx \;\leq\; C_{sm,0}\;\left(\frac{|\supp(\psi_{j,k})|}{2}\right)^r\;2^{j/2}\;|\supp(\psi_{j,k})|\;\leq\;\frac{C_{supp}^{r+1}\;C_{sm,0}}{2^r}\;2^{-j\left(r+\frac{1}{2}\right)}.
\]
We state the assumptions (II) and (V) separately to emphasize their duality, which becomes even more
evident in the statements of Propositions \ref{prop:Holder_suff} and \ref{prop:Holder_nec} in Section
\ref{sec:Hol}. Indeed, Theorems \ref{thm:wave_char} and \ref{thm:main} require $0<r<\min(s,v)$, where
the value of $s$ or $v$ affects only one of the inclusions in either Proposition \ref{prop:Holder_suff}
or in Proposition \ref{prop:Holder_nec}. Moreover, stating (IV) and (V) separately, we can easily generalize our results to the case 
of dual frames with the analysis frame satisfying (II) and (IV) and
the synthesis frame satisfying (III) and (V).  In the regular case, a natural choice in \eqref{eq:bound_vm} is $x_{j,k}=2^{-j} k$.  In general, even if the system $\mathcal{F}$ is non-shift-invariant, the points $x_{j,k}$
ensure the quasi-uniform (standard concept in the context of spline and finite element methods) behavior of the framelets over $\mathbb{R}$  and act as the center for every element
$\psi_{j,k}$. The other assumptions also manifest the quasi-uniform behavior of the framelets over $\mathbb{R}$.

The setting described by assumptions (I)-(V) includes some cases not addressed in the results of Frazier and Jawerth \cite{MR808825} or of Cordero and 
Gr\"ochenig in \cite{MR2067914}. The results of \cite{MR808825} require that the elements of $\mathcal{F}$ in the decomposition of $B^{r}_{p,q}(\mathbb{R})$ are linked to dyadic intervals. The results in \cite{MR2067914} impose the so-called localization property which implies that the system $\mathcal{F}$ is semi-orthogonal (in particular, non-redundant).

On the other hand, one could view the quite natural and application oriented assumptions (I)-(V) to be somewhat restrictive, since these assumptions were designed to fit wavelet tight frames $\mathcal{F}$ constructed using results of \cite{MR2110512}. For such function families $\mathcal{F}$, there exists a sequence of bi-infinite matrices $\{\mathbf{Q}_j \,:\,j\in\mathbb{N}\}$ such that the column vectors
$$
 \Psi_j=[\psi_{j,k}\,:\,k\in\mathbb{Z}], \quad j \in \mathbb{N}, \quad \hbox{and} \quad
 \Phi=[\phi_{k}\,:\,k\in\mathbb{Z}]
$$
satisfy
\begin{equation} \label{eq:WTF}
 \Psi_j\;=\;2^{j/2}\;\mathbf{Q}_j^T \;\Phi(2^{j}\cdot),\quad j\in\mathbb{N}.
\end{equation}
Such bi-infinite matrices $\{\mathbf{Q}_j \,:\,j\in\mathbb{N}\}$ are e.g the ones constructed in
\cite{first_paper} for the family of Dubuc-Deslauriers subdivision schemes \cite{ MR2805717, MR2855428, MR982724}.
For function families $\mathcal{F}$ satisfying \eqref{def:F}, assumptions (II)-(V) reflect the properties of the matrices $\{\mathbf{Q}_j \,:\,j\in\mathbb{N}\}$: (II) controls the support of the columns of the $\mathbf{Q}_j$s, (III) controls the slantedness of the $\mathbf{Q}_j$s and (IV) and (V) are linked to eigenproperties of the $\mathbf{Q}_j$s.

Nevertheless, the spirit of assumptions (I)-(V) merges with the spirit of atoms and molecules in \cite{MR808825} and compactly supported orthogonal wavelet systems, for which (I)-(V) are also satisfied. These similarities are also visible in the structure of the proofs of Propositions \ref{prop:Holder_suff} and \ref{prop:Holder_nec}.

For the sake of completeness, we point out that our setting includes some of the wavelet frames considered in \cite{MR2534255} for which a characterization of the spaces $B^r_{2,2}(\mathbf{R})$, $r\in\mathbb{R}$, is given. However, those frames are shift-invariant, i.e \eqref{def:F} holds with block $2$-slanted $\{\mathbf{Q}_j \,:\,j\in\mathbb{N}\}$. The approach in \cite{MR2534255} applies Fourier techniques that are not feasible in our case, due to the lack of shift-invariance. The lack of shift-invariance makes also the techniques in \cite{MR2094588} inapplicable in our case. In \cite{MR2094588}, the authors characterize Lebesgue and Sobolev spaces via shift-invariant wavelet tight frames.

We concentrate on the case $p=q=\infty$, the one most relevant for subdivision. Our main result,
Theorem \ref{thm:main} whose proof is given in Sections \ref{sec:Hol} and \ref{sec:Zyg}, reads as follows.

\begin{thm} \label{thm:main}
	Let $s>0$ and $v\in\mathbb{N}$. Assume $\mathcal{F} \subset \mathcal{C}^s(\mathbb{R})$
	satisfies assumptions (I)-(V) with $v$ vanishing moments.
	Then, for $r \in (0,\min(s,v))$,
	\[
 B^{r}_{\infty,\infty}(\mathbb{R}) \;=\; \left\{\; \sum_{k\in\mathbb{Z}}a_k\phi_k+\sum_{j\in\mathbb{N}}\sum_{k\in\mathbb{Z}}b_{j,k}\psi_{j,k}\;: \; \{a_k\}_{k\in\mathbb{Z}} \in \ell_\infty(\mathbb{Z}), \ \left\{2^{j\left(r+\frac{1}{2}\right)}\left\|\{b_{j,k}\}_{k\in\mathbb{Z}}\right\|_{\ell_\infty} \;\right\}_{j\in\mathbb{N}} \in \ell_\infty(\mathbb{Z}) \right\}.
\]
\end{thm}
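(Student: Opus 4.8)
\emph{Reduction.} The plan is to prove the asserted equality of sets by establishing the two inclusions separately, after recalling the standard fact that for $r>0$ the space $B^r_{\infty,\infty}(\mathbb{R})$ is the classical H\"older space $\mathcal{C}^r(\mathbb{R})$ when $r\notin\mathbb{N}$ and the Zygmund class when $r\in\mathbb{N}$, and that in either case membership is equivalent to a bound $\|\Delta_h^m f\|_{L^\infty}\le C\,|h|^r$ on the difference of order $m=\lfloor r\rfloor+1>r$. The estimates for $r\notin\mathbb{N}$ are isolated in Propositions \ref{prop:Holder_suff} (the synthesis inclusion ``$\supseteq$'') and \ref{prop:Holder_nec} (the analysis inclusion ``$\subseteq$'') in Section \ref{sec:Hol}; the modifications needed when $r\in\mathbb{N}$ are carried out in Section \ref{sec:Zyg}.

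\emph{Synthesis inclusion.} Suppose $f=\sum_{k}a_k\phi_k+\sum_{j\in\mathbb{N}}g_j$ with $g_j=\sum_k b_{j,k}\psi_{j,k}$, $\{a_k\}_k\in\ell_\infty$, and $\|\{b_{j,k}\}_k\|_{\ell_\infty}\le A\,2^{-j(r+1/2)}$ for all $j$. Since by (III) only $O(1)$ of the supports of the $\phi_k$ (resp., for fixed $j$, of the $\psi_{j,k}$) meet any bounded set, (V) with exponent $0$ gives $\|g_j\|_{L^\infty}\lesssim A\,2^{-jr}$ and (with exponent $r\le s$) $\|\sum_k a_k\phi_k\|_{\mathcal{C}^r}\lesssim\|\{a_k\}_k\|_{\ell_\infty}$, so the series converges locally uniformly and $f$ is well defined. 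Fixing $r'$ with $r<r'\le\min(s,\lfloor r\rfloor+1)$ (possible since $r<s$), the same overlap bound and (V) with exponent $r'$ give the Bernstein-type estimate $\|g_j\|_{\mathcal{C}^{r'}}\lesssim A\,2^{j(r'-r)}$. Inserting these two bounds into the classical interpolation argument --- for step $h$, split $\sum_j g_j$ at the scale $2^{-j}\approx|h|$, controlling the low part by its $\mathcal{C}^{r'}$-norm and the high part by its $L^\infty$-norm --- yields $\|\Delta_h^m\sum_j g_j\|_{L^\infty}\lesssim A\,|h|^r$, while the coarse part is even smoother; hence $f\in B^r_{\infty,\infty}(\mathbb{R})$. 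For $r\in\mathbb{N}$ the relevant difference is the second-order Zygmund difference of $f^{(r-1)}$, which is where Section \ref{sec:Zyg} enters.

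\emph{Analysis inclusion.} Let $f\in B^r_{\infty,\infty}(\mathbb{R})$. The pairings $a_k=\langle f,\phi_k\rangle$, $b_{j,k}=\langle f,\psi_{j,k}\rangle$ are well defined (continuous $f$ against compactly supported integrable framelets), and \eqref{eq:TF} extends from $L^2(\mathbb{R})$ to $f$ by testing against $L^2$-functions once the coefficient decay below is established; so it suffices to prove that decay. By (II) and (V), $\|\phi_k\|_{L^1}\le\|\phi_k\|_{L^\infty}|\supp(\phi_k)|\lesssim1$, hence $|a_k|\le\|f\|_{L^\infty}\|\phi_k\|_{L^1}\lesssim\|f\|_{B^r_{\infty,\infty}}$ and $\{a_k\}_k\in\ell_\infty$. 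For $b_{j,k}$ the key is the $v$ vanishing moments (IV): since $\lfloor r\rfloor\le v-1$, subtracting from $f$ any polynomial $P_{j,k}$ of degree at most $\lfloor r\rfloor$ leaves the pairing unchanged, so $b_{j,k}=\int_{\mathbb{R}}(f-P_{j,k})\,\psi_{j,k}$. When $r\notin\mathbb{N}$, take $P_{j,k}$ the Taylor polynomial of $f$ of degree $\lfloor r\rfloor$ centered at $x_{j,k}$; the H\"older bound on $f^{(\lfloor r\rfloor)}$ gives $|f(x)-P_{j,k}(x)|\lesssim\|f\|_{B^r_{\infty,\infty}}|x-x_{j,k}|^r$, and \eqref{eq:bound_vm} yields
\[
|b_{j,k}|\;\le\;\int_{\mathbb{R}}|f(x)-P_{j,k}(x)|\,|\psi_{j,k}(x)|\,dx\;\lesssim\;\|f\|_{B^r_{\infty,\infty}}\int_{\mathbb{R}}|y|^r\,|\psi_{j,k}(y+x_{j,k})|\,dy\;\lesssim\;\|f\|_{B^r_{\infty,\infty}}\,2^{-j(r+1/2)}.
\]
When $r\in\mathbb{N}$, $f$ need not be $r$ times differentiable, so (Section \ref{sec:Zyg}) one instead takes $P_{j,k}$ a near-best polynomial approximant of degree $\le r$ to $f$ on $\supp(\psi_{j,k})$: Whitney's inequality and the Zygmund condition give $\|f-P_{j,k}\|_{L^\infty(\supp(\psi_{j,k}))}\lesssim\|f\|_{B^r_{\infty,\infty}}|\supp(\psi_{j,k})|^r\lesssim 2^{-jr}$, whence $|b_{j,k}|\le\|f-P_{j,k}\|_{L^\infty(\supp(\psi_{j,k}))}\|\psi_{j,k}\|_{L^1}\lesssim\|f\|_{B^r_{\infty,\infty}}2^{-j(r+1/2)}$ by (II), (V). In all cases $\sup_j 2^{j(r+1/2)}\|\{b_{j,k}\}_k\|_{\ell_\infty}\lesssim\|f\|_{B^r_{\infty,\infty}}$, which is the required decay.

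\emph{Main difficulty.} Apart from the integer-exponent case the argument is bookkeeping around the overlap bound (III) and the support/smoothness bounds (II), (V). The real obstacle is $r\in\mathbb{N}$: point values of derivatives of $f$ are unavailable, so on the synthesis side one must argue via the second-order Zygmund difference of $f^{(r-1)}$, and on the analysis side the Taylor polynomial centered at $x_{j,k}$ must be replaced by a scale-dependent near-best polynomial approximant; making these adjustments while keeping all bounds uniform in $j$ is precisely the content of Section \ref{sec:Zyg}.
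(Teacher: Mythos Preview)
Your synthesis inclusion and the non-integer analysis inclusion coincide with the paper's arguments (Propositions~\ref{prop:Holder_suff}, \ref{prop:Holder_nec} and the $1^{st}$ step in Theorem~\ref{thm:Zygmund}): the scale-splitting at $2^{-J}\approx|h|$, the overlap bound from (III), the Bernstein bound from (V), and the Taylor-remainder trick with (IV) are exactly what the paper does, only phrased more compactly.

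The genuine difference is in the analysis inclusion for $r\in\mathbb{N}$. You replace the unavailable Taylor polynomial by a Whitney near-best polynomial approximant of degree $\le r$ on $\supp(\psi_{j,k})$: Whitney's inequality together with $\omega^{r+1}_\infty(f,t)\lesssim t^r$ gives a local error $\lesssim 2^{-jr}$, and then $\|\psi_{j,k}\|_{L^1}\lesssim 2^{-j/2}$ from (II) and (V) finishes the bound. This is correct and self-contained. The paper takes a quite different route: it introduces an \emph{auxiliary} compactly supported orthonormal wavelet system $\widetilde{\Phi}\cup\{\widetilde{\Psi}_\ell\}$ (e.g.\ Daubechies), expands $f$ in that system, invokes Theorem~\ref{thm:wave_char} to obtain $|\langle f,\widetilde{\psi}_{\ell,m}\rangle|\lesssim 2^{-3\ell/2}$, and then estimates the cross inner products $|\langle\widetilde{\psi}_{\ell,m},\psi_{j,k}\rangle|$ via the already-proved non-integer case (Theorem~\ref{thm:Holder_gen}) and an almost-orthogonality splitting in $\ell<j$ versus $\ell\ge j$. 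Your Whitney argument is shorter, uses only (II), (IV), (V) on $\mathcal{F}$ itself, and avoids importing an external wavelet basis and its characterization theorem; the paper's approach, by contrast, stays entirely within frame/wavelet machinery and makes the duality between the two frame systems explicit, which may be what the authors wanted to highlight. Both reach the same estimate $|b_{j,k}|\lesssim 2^{-j(r+1/2)}$.
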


The paper is organized as follows. In Subsection \ref{subsec:spaces}, we define the function and sequence spaces that we consider. In Section \ref{sec:Hol}, Theorem \ref{thm:Holder_gen} gives the proof of Theorem \ref{thm:main} in the case $r\in(0,\infty)\setminus\mathbb{N}$ and, in Section \ref{sec:Zyg}, Theorem \ref{thm:Zygmund} provides the proof for $r \in\mathbb{N}$. 
We would like to emphasize that the results in Sections \ref{sec:Hol} and \ref{sec:Zyg} are true in regular, semi-regular and irregular
cases. Theorem \ref{thm:main} implies the norm equivalence between Besov spaces $B_{\infty,\infty}^r(\mathbb{R})$ and the 
sequence spaces $\ell_{\infty, \infty}^r$, $r \in (0, \infty)$, see Remark \ref{rem:norm_eq}. 
The proofs in Sections \ref{sec:Hol} and \ref{sec:Zyg} are reminiscent of the continuous wavelet transform techniques in 
\cite{MR1162107, MR1228209} and references therein. In Section \ref{sec:num},  we illustrate our results with
several examples. There the structure of the 
mesh in \eqref{eq:mesh} becomes important. In particular, we use  wavelet tight frames constructed in \cite{first_paper}, to approximate the H\"older-Zygmund regularity of semi-regular subdivision schemes based on B-splines, the family of Dubuc-Deslauriers subdivision schemes and interpolatory radial basis functions (RBFs) based subdivision. Semi-regular B-spline and Dubuc-Deslauriers schemes were introduced, e.g in \cite{MR1687779,MR1356994, Warren:2001:SMG:580358}. The construction of semi-regular RBFs based schemes is
our generalization of \cite{MR2231695,MR2578850} to the semi-regular case. We would like to point out that such semi-regular schemes 
can be used for blending curve pieces with different properties.  

%%%%%%%%%%%%%%%%%%%%%%%%%%%%%%%%%%%%%%%%%%%%%%%%%%%%%%%%%%%%%%%%%%%%%%%%%%%
\subsection{Function and sequence spaces: notation} \label{subsec:spaces}
%%%%%%%%%%%%%%%%%%%%%%%%%%%%%%%%%%%%%%%%%%%%%%%%%%%%%%%%%%%%%%%%%%%%%%%%%%

We use the standard notation for the  function spaces $\mathcal{C}^s(\mathbb{R})$, $s \in \mathbb{N}_0$, the H\"older spaces
\[
 \mathcal{C}^s(\mathbb{R})\;=\;\left\{\;f\in\mathcal{C}^\ell(\mathbb{R})\;:\;\sup_{x,h\in\mathbb{R}} \;\frac{|f^{(\ell)}(x+h)-f^{(\ell)}(x)|}{|h|^\alpha}\;<\;\infty\;\right\}, \quad s=\ell+\alpha, \quad
\ell \in \mathbb{N}_0, \quad \alpha \in (0,1),
\]
with $f^{(\ell)}$ denoting the $\ell$-th derivative of $f$, the \emph{Zygmund class}
\begin{equation} \label{eq:Zyg}
 \Lambda(\mathbb{R})\;=\;\left\{\;f:\mathbb{R}\rightarrow\mathbb{R}\;:\;\sup_{x,h\in\mathbb{R}}\frac{|f(x+h)-2f(x)+f(x-h)|}{|h|}\;<\;\infty\;\right\},
\end{equation}
the Lebesque spaces
$L^p(\mathbb{R})$, $1 \le p < \infty$, and for sequence spaces $\ell_p(\mathbb{Z})$, $1 \le p \le \infty$.

Besov spaces $B^r_{p,q}(\mathbb{R})$, e.g in \cite{MR1228209}, are defined by
\begin{equation}\label{eq:Besov}
B^r_{p,q}(\mathbb{R})\;:=\;\left\{\;f\in L^p(\mathbb{R})\;:\;\|f\|_{B^r_{p,q}}=\left\|\{2^{j r}\omega^{[r]+1}_p(f,2^{-j})\}_{j\in\mathbb{N}}\right\|_{\ell^q}<\infty\;\right\},\quad 1\leq p,q\leq\infty,\quad r \in (0,\infty)
\end{equation}
with the \emph{$p$-th modulus of continuity of order $n\in\mathbb{N}$}
\[\omega^n_p(f,x)\;=\;\sup_{|h|\leq x}\|\Delta^{n}_h(f,\cdot)\|_{L^p}\]
and the \emph{difference operator of order $n \in \mathbb{N}$ and step $h>0$}
\[\Delta^n_h(f,x)\;=\;\sum_{\ell=0}^{n} {{n}\choose{\ell}} (-1)^{n-\ell}f(x+\ell h).\]
The special case $p=q=\infty$ reduces to
\begin{equation}\label{eq:Hol-Zyg}
 B^r_{\infty,\infty}(\mathbb{R})\;=\;\left\{\begin{array}{cl}
	\mathcal{C}^r(\mathbb{R})\cap L^\infty(\mathbb{R}), & \textrm{if } r\in(0,\infty)\setminus\mathbb{N}, \\ \\
	\{\;f\in \mathcal{C}^{r-1}(\mathbb{R})\cap L^\infty(\mathbb{R})\;:\;f^{(r-1)}\in\Lambda(\mathbb{R})\;\}, & \textrm{if } r\in\mathbb{N}.
\end{array}\right.
\end{equation}
The corresponding sequence spaces $\ell_{p,q}^r$, $r \in (0,\infty)$, are defined, for $1 \le p \le \infty$ and  $1 \le q <\infty$,  by
$$
 \ell_{p,q}^r=\Big\{ (a,b) \in \mathbb{Z} \times (\mathbb{N} \times \mathbb{Z}) \,:\, \|(a,b)\|_{\ell_{p,q}^r}=
 \Big(\|a\|_{\ell_p}^q  + \sum_{j=1}^\infty 2^{j(r+\frac{1}{2}-\frac{1}{p})q} \|\{b_{j,k}\}_{k\in\mathbb{Z}}\|_{\ell_p}^q\Big)^{1/q} \Big\}
$$
and, for $1 \le p \le \infty$ and $q=\infty$, by
$$
 \ell_{p,\infty}^r=\Big\{ (a,b) \in \mathbb{Z} \times (\mathbb{N} \times \mathbb{Z}) \,:\, \|(a,b)\|_{\ell_{p,\infty}^r}=
 \max\big\{ \|a\|_{\ell_p},  \sup_{j \in \mathbb{N}} 2^{j(r+\frac{1}{2}-\frac{1}{p})} \|\{b_{j,k}\}_{k\in\mathbb{Z}}\|_{\ell_p} \big\} \Big\} \,.
$$

%%%%%%%%%%%%%%%%%%%%%%%%%%%%%%%%%%%%%%%%%%%%%%%%%%%%%%%%%%%%%%%%%%%%%%%%%%%%
\section{Characterization of H\"older spaces $B^r_{\infty,\infty}(\mathbb{R})$, $r\in(0,\infty)\setminus\mathbb{N}$} \label{sec:Hol}
%%%%%%%%%%%%%%%%%%%%%%%%%%%%%%%%%%%%%%%%%%%%%%%%%%%%%%%%%%%%%%%%%%%%%%%%%%%%

In this section, in Theorem~\ref{thm:Holder_gen} we characterize the H\"older spaces
$B^r_{\infty,\infty}(\mathbb{R})=\mathcal{C}^r(\mathbb{R})\cap L^\infty(\mathbb{R})$ for
$r\in(0,\infty)\setminus\mathbb{N}$ in terms of the function system $\mathcal{F}$ in
\eqref{def:F}. The proof of Theorem~\ref{thm:Holder_gen} follows after
Propositions \ref{prop:Holder_suff} and \ref{prop:Holder_nec} that stress the duality between conditions (IV) and (V). Proposition \ref{prop:Holder_suff}, provides the inclusion ''$\supseteq$``
under assumptions (III), (V) and $r \in(0,\min(s,1))$. Whereas Proposition \ref{prop:Holder_nec}
yields the other inclusion ''$\subseteq$`` under assumptions
(I), (II), (IV) and $r \in (0,1)$. The proof of Theorem \ref{thm:Holder_gen} then extends the argument of Propositions \ref{prop:Holder_suff} and \ref{prop:Holder_nec} to the case $r>1$, $r\not\in\mathbb{N}$.  Our results show that the continuous wavelet transform techniques from
\cite{MR1162107, MR1228209} and references therein are almost directly applicable in the irregular setting.

\begin{thm} \label{thm:Holder_gen} Let $s>0$ and $v\in\mathbb{N}$. Assume $\mathcal{F}$ satisfies (I)-(V) with $v$ vanishing moments. Then, for $r \in(0,\min(s,v))\setminus\mathbb{N}$,
\begin{equation*}
 B^{r}_{\infty,\infty}(\mathbb{R}) \;=\; \left\{\; \sum_{k\in\mathbb{Z}}a_k\phi_k+\sum_{j\in\mathbb{N}}\sum_{k\in\mathbb{Z}}b_{j,k}\psi_{j,k}\;: \; 
 (a,b) \in \ell_{\infty,\infty}^r \ \hbox{with} \ \ a=\{a_k\}_{k \in \mathbb{Z}}, \ b=\{b_{j,k}\}_{j\in\mathbb{N}, k \in \mathbb{Z}} \right\}.
\end{equation*}
\end{thm}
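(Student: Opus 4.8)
The plan is to prove the two inclusions separately, following the ``atoms and molecules'' philosophy of Frazier--Jawerth as adapted to the frame setting through the continuous wavelet transform techniques of \cite{MR1162107,MR1228209}. For the inclusion ``$\supseteq$'', the key point is that any function of the form $f=\sum_k a_k\phi_k+\sum_{j,k}b_{j,k}\psi_{j,k}$ with $(a,b)\in\ell^r_{\infty,\infty}$ lies in $\mathcal{C}^r\cap L^\infty$. I would first establish the case $r\in(0,\min(s,1))$ via Proposition~\ref{prop:Holder_suff}: using the bound $\|\psi_{j,k}\|_{\mathcal{C}^r}\le C_{sm,r}2^{j(r+1/2)}$ from (V), the localization/slantedness bound (III) on how many terms at level $j$ hit a given interval, and the geometric summability of $2^{j(r+1/2)}\cdot2^{-j(r+1/2)}$ against the coefficient decay, one controls $\sup_x|f(x)|$ and the H\"older seminorm $\sup_{x,h}|f(x+h)-f(x)|/|h|^r$ by splitting the sum over $j$ at the scale $2^{-j}\approx|h|$: for coarse scales one uses the smoothness bound on $\psi_{j,k}$ directly (estimating $|\psi_{j,k}(x+h)-\psi_{j,k}(x)|\le \|\psi_{j,k}\|_{\mathcal{C}^1}|h|\le\|\psi_{j,k}\|_{\mathcal{C}^r}^{?}$, or more precisely interpolating), while for fine scales one uses that only $O(2^j|h|+1)$ framelets are relevant and bounds each by its sup-norm $\le C_{sm,0}2^{j/2}$.

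For the inclusion ``$\subseteq$'', I would invoke the tight-frame reproducing identity (I): for $f\in\mathcal{C}^r\cap L^\infty$ we have $a_k=\langle f,\phi_k\rangle$ and $b_{j,k}=\langle f,\psi_{j,k}\rangle$, and the task is to show $(a,b)\in\ell^r_{\infty,\infty}$. Boundedness of $\{a_k\}$ follows from (II) (uniformly bounded supports) and $f\in L^\infty$. The essential estimate is $|b_{j,k}|\le C\,2^{-j(r+1/2)}\|f\|_{\mathcal{C}^r}$, which is where the vanishing moments (IV) enter: writing $b_{j,k}=\int\psi_{j,k}(x)f(x)\,dx$ and subtracting the Taylor polynomial of $f$ at the center point $x_{j,k}$ of degree $\lfloor r\rfloor$ (here $\lfloor r\rfloor=0$ when $r<1$, and $\le v-1$ in general since $r<v$), the vanishing moments kill the polynomial part, so $b_{j,k}=\int\psi_{j,k}(x)\big(f(x)-T_{x_{j,k}}f(x)\big)\,dx$; the H\"older bound on the remainder $|f(x)-T_{x_{j,k}}f(x)|\le C\|f\|_{\mathcal{C}^r}|x-x_{j,k}|^{r}$ combined with the moment-localization bound \eqref{eq:bound_vm} with exponent $r$ (using $\lfloor r\rfloor\le r\le v$) gives exactly the desired decay. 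This is Proposition~\ref{prop:Holder_nec} for $r<1$.

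The remaining step is to bootstrap from $r\in(0,1)$ to general $r\in(0,\min(s,v))\setminus\mathbb{N}$. Here I would use the standard lifting trick: differentiate. Writing $r=\ell+\alpha$ with $\ell\in\mathbb{N}$, $\alpha\in(0,1)$, one relates the frame analysis/synthesis at level $r$ to that at level $\alpha$ by passing $\ell$ derivatives onto the framelets. For the ``$\subseteq$'' direction one integrates by parts $\ell$ times in $b_{j,k}=\langle f,\psi_{j,k}\rangle=(-1)^\ell\langle f^{(\ell)},\Psi_{j,k}\rangle$ where $\Psi_{j,k}$ is an $\ell$-fold antiderivative of $\psi_{j,k}$, checking that $\{\Psi_{j,k}\}$ still satisfies the relevant localization and decay with the $2^{j\ell}$ rescaling absorbed, and then applies the $r=\alpha$ estimate to $f^{(\ell)}\in\mathcal{C}^\alpha$. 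For the ``$\supseteq$'' direction one differentiates the synthesis sum term by term $\ell$ times and applies the $\alpha$-level bound; the condition $r<s$ guarantees enough smoothness of the $\psi_{j,k}$ for this to make sense, and $r<v$ guarantees that the antiderivatives $\Psi_{j,k}$ (which have $v-\ell\ge1$ vanishing moments) still do their job.

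I expect the main obstacle to be the bookkeeping in the lifting step: one must verify that the antiderivative framelets $\Psi_{j,k}$, which are \emph{not} simply dyadic rescalings of a single function in our non-shift-invariant setting, inherit scaled versions of (II)--(V) --- in particular that taking $\ell$ antiderivatives of a function with $v$ vanishing moments and controlled localization yields a function with $v-\ell$ vanishing moments and the expected $2^{-j(\alpha+\ell+1/2)}$-type localization uniformly in $k$. This is routine in the regular case because everything is a rescaling, but here it requires re-deriving the bounds from (II)--(V) directly. Everything else is the now-classical Calder\'on-type splitting of the scale sum at $2^{-j}\approx|h|$ together with geometric series estimates.
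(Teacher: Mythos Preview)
Your proposal is correct, and for $r\in(0,1)$ as well as for the ``$\supseteq$'' direction in general (differentiating the synthesis term by term and applying the $r<1$ argument to $g^{(n)}$) it matches the paper exactly.

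The one genuine divergence is in the ``$\subseteq$'' direction for $r=n+\alpha>1$. You actually describe the right approach in your second paragraph --- subtract the Taylor polynomial of $f$ of degree $\lfloor r\rfloor=n$ at $x_{j,k}$ (this uses $n+1\le v$ vanishing moments), bound the remainder by $C\|f^{(n)}\|_{\mathcal{C}^\alpha}|x-x_{j,k}|^{r}$, and apply \eqref{eq:bound_vm} with exponent $r$ --- and this is precisely what the paper does (phrased via the Lagrange form: subtract degree $n-1$, write the Lagrange remainder $f^{(n)}(\xi(x))(x-x_{j,k})^n/n!$, then subtract the extra $(n{+}1)$-th moment term $f^{(n)}(x_{j,k})(x-x_{j,k})^n/n!$). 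That argument already works for all non-integer $r<\min(s,v)$ in one stroke; no bootstrap is needed.

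Your alternative lifting in paragraph~3 via integration by parts, $\langle f,\psi_{j,k}\rangle=(-1)^n\langle f^{(n)},\Psi_{j,k}\rangle$ with $\Psi_{j,k}$ an $n$-fold antiderivative, is also valid: compact support of $\Psi_{j,k}$ follows from the first $n$ vanishing moments, the remaining $v-n\ge1$ moments survive, and the scaling bounds are recovered from (II) and (V) since each antiderivative gains a factor $\le C_{supp}2^{-j}$ in $L^\infty$. It reduces cleanly to the $r<1$ case. The trade-off is exactly the one you identify: you must re-derive scaled versions of (II)--(V) for the $\Psi_{j,k}$ uniformly in $k$ in the non-shift-invariant setting, whereas the paper's direct Taylor argument works with the original framelets and sidesteps this bookkeeping entirely.
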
	

We start by proving the following result.

\begin{prop} \label{prop:Holder_suff}
	Let $s>0$. Assume $\mathcal{F}$ satisfies (III) and (V). Then, for $r \in (0,\min(s,1))$,
\[
 B^{r}_{\infty,\infty}(\mathbb{R}) \;\supseteq\; \left\{\; \sum_{k\in\mathbb{Z}}a_k\phi_k+\sum_{j\in\mathbb{N}}\sum_{k\in\mathbb{Z}}b_{j,k}\psi_{j,k}\;: \; (a,b) \in \ell_{\infty,\infty}^r \ \hbox{with} \ \ a=\{a_k\}_{k \in \mathbb{Z}}, \ b=\{b_{j,k}\}_{j\in\mathbb{N}, k \in \mathbb{Z}} \right\}.
\]
\end{prop}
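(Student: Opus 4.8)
The plan is to show that any function $f=\sum_{k}a_k\phi_k+\sum_{j\in\mathbb{N}}\sum_k b_{j,k}\psi_{j,k}$ with $(a,b)\in\ell^r_{\infty,\infty}$ lies in $\mathcal{C}^r(\mathbb{R})\cap L^\infty(\mathbb{R})$, i.e.\ that it is bounded and satisfies the H\"older estimate $|f(x+h)-f(x)|\le C|h|^r$ uniformly in $x$ and $h$ (recall $0<r<1$). The natural device is to split the framelet sum at a scale depending on $|h|$: fix $x,h\in\mathbb{R}$ with, say, $0<|h|\le 1$, and let $J=J(h)\in\mathbb{N}$ be determined by $2^{-J}\le |h|<2^{-J+1}$. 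Write
\[
 f(x+h)-f(x)\;=\;\Big(\sum_{k}a_k+\sum_{j=1}^{J}\sum_k b_{j,k}\,\psi_{j,k}\Big)\text{-part}\;+\;\sum_{j>J}\sum_k b_{j,k}\big(\psi_{j,k}(x+h)-\psi_{j,k}(x)\big).
\]
For the \emph{coarse} part (scales $j\le J$ together with the $\phi_k$ term) I would use the smoothness bound (V) with $r\le 1$: by the mean value theorem $|\phi_k(x+h)-\phi_k(x)|\le \|\phi_k\|_{\mathcal{C}^1}|h|$ and $|\psi_{j,k}(x+h)-\psi_{j,k}(x)|\le \|\psi_{j,k}\|_{\mathcal{C}^1}|h|\le C_{sm,1}2^{3j/2}|h|$; more care is needed because $r<1$, so instead I would interpolate, using $|\psi_{j,k}(x+h)-\psi_{j,k}(x)|\le \min\{2\|\psi_{j,k}\|_{\mathcal C^0},\,\|\psi_{j,k}\|_{\mathcal C^1}|h|\}$, and for the H\"older seminorm of order $r$ simply bound the increment by $\|\psi_{j,k}\|_{\mathcal{C}^r}|h|^r$ is not available since $r<s$ may force $r<1<s$... the cleanest route is: on scales $j\le J$ bound $|\psi_{j,k}(x+h)-\psi_{j,k}(x)|$ by $C_{sm,1}2^{3j/2}|h|\cdot(2^{-j})^{1-r}\cdot 2^{j(1-r)}$—rather, bound it by $(\|\psi_{j,k}\|_{\mathcal C^0})^{1-r}(\|\psi_{j,k}\|_{\mathcal C^1}|h|)^{r}$, which gives a factor $\lesssim 2^{j(1/2)(1-r)}2^{j(3/2)r}|h|^r=2^{j(1/2+r)}|h|^r$. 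Summing over $k\in\Gamma_j(\{x,x+h\})$, which by (III) has $O(1)$ many indices since $|\{x,x+h\}|\le 1$, and over $j\le J$, the geometric series gives $\lesssim 2^{J(1/2+r)}|h|^r\sup_j 2^{j(r+1/2)}\|b_{j,\cdot}\|_{\ell_\infty}\cdot 2^{-J(r+1/2)}$—here one must keep careful track so that the $2^{J(r+1/2)}$ from the coefficient norm cancels against $2^{-J(r+1/2)}$, leaving $2^{J(1/2+r)}|h|^{r}\cdot 2^{-J(1/2+r)}\asymp |h|^r$. For the $\phi_k$ contribution, $|h|\le 2^{-J+1}\le 2^{-J}\cdot 2$, so $|h|\le 2\cdot 2^{-J}\le 2|h|^{?}$; since $r<1$ and $|h|\le1$ we have $|h|\le|h|^r$, so that term is $\lesssim\|a\|_{\ell_\infty}|h|\le\|a\|_{\ell_\infty}|h|^r$.

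For the \emph{fine} part (scales $j>J$) I would not use the difference at all but estimate each term by the triangle inequality: $|\psi_{j,k}(x+h)-\psi_{j,k}(x)|\le 2\|\psi_{j,k}\|_{\mathcal C^0}\le 2C_{sm,0}2^{j/2}$. Again only $O(1)$ indices $k$ contribute per scale by (III), so the fine part is $\lesssim\sum_{j>J}2^{j/2}\|b_{j,\cdot}\|_{\ell_\infty}=\sum_{j>J}2^{-jr}\big(2^{j(r+1/2)}\|b_{j,\cdot}\|_{\ell_\infty}\big)\lesssim 2^{-Jr}\sup_j 2^{j(r+1/2)}\|b_{j,\cdot}\|_{\ell_\infty}\asymp|h|^r\|(a,b)\|_{\ell^r_{\infty,\infty}}$, where the geometric series in $j$ converges precisely because $r>0$. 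The case $|h|>1$ is handled by $L^\infty$-boundedness of $f$: $|f(x+h)-f(x)|\le 2\|f\|_\infty\le 2\|f\|_\infty|h|^r$. Boundedness of $f$ itself follows the same template with $J=0$: $\|f\|_\infty\le\sum_k|a_k|\,\|\phi_k\|_{\mathcal C^0}\cdot(\text{local count})+\sum_{j\ge1}(\text{local count})\cdot\|b_{j,\cdot}\|_{\ell_\infty}\cdot C_{sm,0}2^{j/2}\lesssim\|a\|_{\ell_\infty}+\sum_{j\ge1}2^{-jr}\big(2^{j(r+1/2)}\|b_{j,\cdot}\|_{\ell_\infty}\big)\lesssim\|(a,b)\|_{\ell^r_{\infty,\infty}}$, using (II) and (III) to bound the number of nonzero terms overlapping a fixed point, and $r>0$ for convergence. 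A preliminary remark is also needed to justify that the series defining $f$ converges (uniformly, say), which the same majorant argument supplies.

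The main obstacle I anticipate is the bookkeeping in the coarse-scale estimate: one must interpolate between the $\mathcal C^0$ and $\mathcal C^1$ bounds of (V) so that the exponent of $2^{j}$ that appears after summing the geometric series in $j\le J$ matches exactly the scaling $2^{-J(r+1/2)}$ hidden in the $\ell^r_{\infty,\infty}$-norm of $b$, and similarly ensure the $\phi_k$-term and the $|h|>1$ regime do not spoil the uniform constant. Everything else---the $O(1)$ overlap counts from (III), the convergence of the tail from $r>0$---is routine. No vanishing moments (IV) and no lower bound on $s$ beyond $r<\min(s,1)$ are used here, which is consistent with the stated hypotheses; the duality remark in the text explains why (IV) will instead be the workhorse in Proposition \ref{prop:Holder_nec}.
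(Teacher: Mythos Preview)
Your overall architecture---split $f=f_0+g$, choose $J$ with $2^{-J}<|h|\le 2^{-J+1}$, use smoothness of the framelets on scales $j<J$ and the $\mathcal{C}^0$ bound on scales $j\ge J$---is exactly the paper's. The fine-scale estimate and the $L^\infty$ bound are fine. The gap is precisely where you flagged it: the coarse-scale bookkeeping.

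Your interpolation $|\psi_{j,k}(x+h)-\psi_{j,k}(x)|\le(2\|\psi_{j,k}\|_{\mathcal C^0})^{1-r}(\|\psi_{j,k}\|_{\mathcal C^1}|h|)^{r}\lesssim 2^{j(r+1/2)}|h|^r$ is correct as an inequality, but combining it with $|b_{j,k}|\le C_b\,2^{-j(r+1/2)}$ gives a summand that is \emph{constant} in $j$, so $\sum_{j=1}^{J-1}(\cdots)\asymp J\asymp|\log|h||$. The ``geometric series'' you invoke has ratio $1$, and your final estimate is actually $|\log|h||\cdot|h|^r$, not $|h|^r$. In addition, the interpolation uses $\|\psi_{j,k}\|_{\mathcal C^1}$, which assumption~(V) only supplies when $s\ge 1$; the proposition allows $0<r<s<1$.

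Both issues are fixed by the same move, which is what the paper does: pick $\epsilon>0$ with $r+\epsilon<\min(s,1)$ and use the $\mathcal C^{r+\epsilon}$ bound from (V) directly,
\[
|\psi_{j,k}(x+h)-\psi_{j,k}(x)|\;\le\;C_{sm,r+\epsilon}\,2^{j(r+\epsilon+1/2)}\,|h|^{r+\epsilon}.
\]
After multiplying by $|b_{j,k}|\le C_b\,2^{-j(r+1/2)}$ and the $O(1)$ overlap count from (III), the sum over $j\le J-1$ is genuinely geometric with ratio $2^{-\epsilon}$, and since $|h|^{r+\epsilon}2^{J\epsilon}\lesssim|h|^r$ the coarse part is $\lesssim|h|^r$. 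The same remark applies to the $\phi_k$ term: rather than the mean value theorem (which needs $s\ge 1$), use that each $\phi_k\in\mathcal C^s\subseteq\mathcal C^r$ and the sum is locally finite by (III), so $f_0\in\mathcal C^r$ directly.
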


\begin{proof}
We consider $f(x)\;=\;f_0(x)\;+\;g(x)$, $x\in\mathbb{R}$, where
\begin{equation}\label{def:f0_g}
 f_0(x)\;=\;\sum\limits_{k\in\mathbb{Z}}\;a_{k}\;\phi_k(x)\quad\textrm{and}\quad g(x)\;=\;\sum\limits_{j\in\mathbb{N}}\;\sum\limits_{k\in\mathbb{Z}}\;b_{j,k}\;\psi_{j,k}(x),
\end{equation}
with finite
\begin{equation}\label{eq:coeff_bound}
 C_a:=\sup_{k\in\mathbb{Z}}\{|a_k|\} \quad \textrm{and} \quad 
 C_b:=\sup_{j \in \mathbb{N}} 2^{j\left(r+\frac{1}{2}\right)} \sup_{k\in\mathbb{Z}} \{|b_{j,k}|\}.
\end{equation}
Since on every open bounded interval in $\mathbb{R}$ the sum defining $f_0$ is finite due to (III), 
we have $f_0\in \mathcal{C}^s(\mathbb{R})\subseteq \mathcal{C}^r(\mathbb{R})$ due to $r<s$. Moreover, by (III) and (V), we obtain
\begin{equation}\label{es:f0}
 \|f_0\|_{L^\infty}\;\leq\;C_a\;C_\Gamma\;C_{sm,0}\;<\;\infty.
\end{equation}
Analogously, since $r>0$, we have
\begin{equation} \label{es:g}
\|g\|_{L^\infty}\;\leq\;C_b\;C_\Gamma\;C_{sm,0}\;\sum\limits_{j\in\mathbb{N}}\;2^{-jr}\;<\;\infty,
\end{equation}
thus, $f\in L^\infty(\mathbb{R})$.
Let $x,h\in\mathbb{R}$. By \eqref{eq:coeff_bound}, we get
\[\begin{array}{rcl}
\left|\;g(x+h)\;-\;g(x)\;\right| &\leq& \sum\limits_{j\in\mathbb{N}}\;\sum\limits_{k\in\mathbb{Z}}\;\left|\;b_{j,k}\;\right|\;\left|\;\psi_{j,k}(x+h)\;-\;\psi_{j,k}(x)\;\right| \\ \\
&\le& C_b\,|h|^r\;\sum\limits_{j\in\mathbb{N}}\;\frac{2^{-j\left(r+\frac{1}{2}\right)}}{|h|^{r}}\;\sum\limits_{k\in\mathbb{Z}}\;\left|\;\psi_{j,k}(x+h)\;-\;\psi_{j,k}(x)\;\right|\;. \\ \\
\end{array}
\]
Since there exists $J\in\mathbb{Z}$ such that
\[2^{-J}\;<\;|h|\;\leq\;2^{-J+1}\;,\]
we have
\[\begin{array}{rcl}
\left|\;g(x+h)\;-\;g(x)\;\right| &\leq&  C_b\,|h|^r\;\sum\limits_{j\in\mathbb{N}}\;2^{(J-j)r-j/2}\;\sum\limits_{k\in\mathbb{Z}}\;\left|\;\psi_{j,k}(x+h)\;-\;\psi_{j,k}(x)\;\right| \\ \\
&=& C_b\,|h|^r\;\left(\;A\;+\;B\;\right)\;,
\end{array}\]
where
{
\begin{equation} \label{eq:AB1} \begin{array}{c}
A\;=\;\sum\limits_{j=1}^{J-1}\;2^{(J-j)r-j/2}\;\sum\limits_{k\in\mathbb{Z}}\;\left|\;\psi_{j,k}(x+h)\;-\;\psi_{j,k}(x)\;\right|\quad\textrm{and}\\ \\
B\;=\;\sum\limits_{j=J}^{\infty}\;2^{(J-j)r-j/2}\;\sum\limits_{k\in\mathbb{Z}}\;\left|\;\psi_{j,k}(x+h)\;-\;\psi_{j,k}(x)\;\right|.
\end{array}\end{equation}
}
If $J\leq 1$, $A\;=\;0$. Otherwise, for every $\epsilon>0$ with $r<r+\epsilon<\min(s,1)$, due to (V) we have
\[
\left|\;\psi_{j,k}(x+h)\;-\;\psi_{j,k}(x)\;\right|\;\leq\;C_{sm,r+\epsilon}\;2^{j\left(r+\epsilon+\frac{1}{2}\right)}\;|h|^{r+\epsilon}\;\leq\;C_{sm,r+\epsilon}\;2^{(j-J)(r+\epsilon)}\;2^{j/2}\;2^{r+\epsilon}\;,
\]
and, by (III), the sum in $A$ over $k$ has at most
\[
|\Gamma_j(x+h)|\;+\;|\Gamma_j(x)|\;\leq\;2\;C_\Gamma
\]
non-zero elements. Thus,
\begin{equation}\label{eq:A_bound}
A\;\leq\;2^{r+\epsilon+1}\;C_\Gamma\;C_{sm,r+\epsilon}\;\sum\limits_{j=1}^{J-1}\;(2^{-\epsilon})^{(J-j)}\;\leq
\;4\;C_\Gamma\;C_{sm,r+\epsilon}\;\sum\limits_{j=1}^{J-1}\;(2^{-\epsilon})^{j}.
\end{equation}
Therefore, since $\epsilon>0$, $A$ is bounded. To conclude the proof, we observe that, by (V),
\[
B\;\leq\;2\;C_\Gamma\;C_{sm,0}\;\sum\limits_{j=J}^{\infty}\;(2^{-r})^{j-J}\;=\;2\;C_\Gamma\;C_{sm,0}\;\frac{1}{1-2^{-r}}.
\]
Thus $|g(x+h)-g(x)|/|h|^r$ is uniformly bounded in $x$ and $h$, which leads to $g\in B^r_{\infty,\infty}(\mathbb{R})$ and $f\in B^r_{\infty,\infty}(\mathbb{R})$ with $\|f\|_{B_{\infty,\infty}^r} \le C \,\|(a,b)\|_{\ell_{\infty,\infty}^r}$ for some constant $C>0$.
\end{proof}

Next, we give a proof of Proposition \ref{prop:Holder_nec}.

%%%%%%%%%%%%%%%%%%%%%%%%%%%%%%%%%%%%%%
\begin{prop} \label{prop:Holder_nec}
Assume $\mathcal{F} \subset \mathcal{C}^0(\mathbb{R})$ with uniformly bounded
$\{\phi_k\,:\,k\in\mathbb{Z}\}$ satisfies (I), (II) and (IV) with $1$ vanishing moment.
	Then, for $r\in (0,1)$,
\[
 B^{r}_{\infty,\infty}(\mathbb{R}) \;\subseteq\; \left\{\; \sum_{k\in\mathbb{Z}}a_k\phi_k+\sum_{j\in\mathbb{N}}\sum_{k\in\mathbb{Z}}b_{j,k}\psi_{j,k}\;: \; (a,b) \in \ell_{\infty,\infty}^r \ \hbox{with} \ \ a=\{a_k\}_{k \in \mathbb{Z}}, \ b=\{b_{j,k}\}_{j\in\mathbb{N}, k \in \mathbb{Z}} \right\}.
\]
\end{prop}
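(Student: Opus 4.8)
The plan is to prove the inclusion ``$\subseteq$'' by taking an arbitrary $f\in B^r_{\infty,\infty}(\mathbb{R})=\mathcal{C}^r(\mathbb{R})\cap L^\infty(\mathbb{R})$, $r\in(0,1)$, expanding it via the tight frame reconstruction formula \eqref{eq:TF}, and then showing that the resulting coefficient sequences $a_k=\langle f,\phi_k\rangle$ and $b_{j,k}=\langle f,\psi_{j,k}\rangle$ lie in $\ell^r_{\infty,\infty}$. That the frame expansion converges to $f$ with these coefficients is precisely assumption (I), so the only real work is estimating the coefficients.

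First I would bound $\{a_k\}$: since $f\in L^\infty(\mathbb{R})$ and $\{\phi_k\}$ is uniformly bounded in $\mathcal{C}^0$ with $|\supp(\phi_k)|\le C_{supp}$ by (II), we get $|a_k|=|\langle f,\phi_k\rangle|\le \|f\|_{L^\infty}\,C_{sm,0}\,C_{supp}$, so $\{a_k\}\in\ell_\infty(\mathbb{Z})$. The heart of the matter is the estimate on $b_{j,k}$. Here I would exploit the single vanishing moment in (IV): since $\int_\mathbb{R}\psi_{j,k}=0$, for any constant $c$ we have $b_{j,k}=\langle f,\psi_{j,k}\rangle=\langle f-c,\psi_{j,k}\rangle$. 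Choosing $c=f(x_{j,k})$ with the center point $x_{j,k}$ from (IV), and substituting $x\mapsto x+x_{j,k}$,
\[
 |b_{j,k}|\;=\;\left|\int_\mathbb{R}\bigl(f(x+x_{j,k})-f(x_{j,k})\bigr)\,\psi_{j,k}(x+x_{j,k})\,dx\right|
 \;\leq\;\|f\|_{\mathcal{C}^r}\int_\mathbb{R}|x|^r\,|\psi_{j,k}(x+x_{j,k})|\,dx,
\]
where I used the H\"older estimate $|f(x+x_{j,k})-f(x_{j,k})|\le \|f\|_{\mathcal{C}^r}|x|^r$ valid because $r\in(0,1)$. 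Applying the localization bound \eqref{eq:bound_vm} from (IV) with this exact value of $r$ (which is admissible since $0\le r\le 1=v$), the integral is at most $C_{vm,r}\,2^{-j(r+1/2)}$, giving $|b_{j,k}|\le \|f\|_{\mathcal{C}^r}\,C_{vm,r}\,2^{-j(r+1/2)}$ uniformly in $k$. Hence $2^{j(r+1/2)}\|\{b_{j,k}\}_k\|_{\ell_\infty}\le \|f\|_{\mathcal{C}^r}\,C_{vm,r}$, which is bounded in $j$, so $\{b_{j,k}\}\in\ell^r_{\infty,\infty}$ and $(a,b)\in\ell^r_{\infty,\infty}$ with $\|(a,b)\|_{\ell^r_{\infty,\infty}}\le C\,\|f\|_{B^r_{\infty,\infty}}$.

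I expect the conceptual content to be entirely in the vanishing-moment trick above; the main technical care is just making sure the expansion \eqref{eq:TF} is the one we want to use (it is, verbatim, assumption (I)), and keeping track that the sum $\sum_k a_k\phi_k+\sum_{j,k}b_{j,k}\psi_{j,k}$ equals $f$ rather than merely converging in $L^2$ — but since this is exactly the statement of (I), no extra argument is needed. The only mild subtlety is that the representation of $f$ as an element of the right-hand set requires $(a,b)\in\ell^r_{\infty,\infty}$, which is what we just verified; so there is nothing left to check. If one wanted a norm inequality as well, it falls out of the constants above, mirroring the closing remark of the proof of Proposition \ref{prop:Holder_suff}.
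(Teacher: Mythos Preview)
Your argument is essentially the same vanishing-moment trick the paper uses, and the coefficient estimates are correct. However, there is a genuine gap you explicitly dismissed: assumption (I) gives the reconstruction formula \eqref{eq:TF} \emph{only for} $f\in L^2(\mathbb{R})$, not for all $f\in B^r_{\infty,\infty}(\mathbb{R})$. A general bounded H\"older function need not belong to $L^2(\mathbb{R})$ (think of a nonzero constant, or any $f$ with $\liminf_{|x|\to\infty}|f(x)|>0$), so you cannot invoke (I) ``verbatim'' to obtain the representation $f=\sum_k a_k\phi_k+\sum_{j,k}b_{j,k}\psi_{j,k}$. Your remark that ``this is exactly the statement of (I), no extra argument is needed'' is therefore incorrect.

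The paper handles this by first restricting to $f\in B^r_{\infty,\infty}(\mathbb{R})\cap L^2(\mathbb{R})$, where (I) applies directly, and then appealing to a density argument for the general case. You should do the same: prove the estimates for $f\in B^r_{\infty,\infty}\cap L^2$ exactly as you did, and then pass to general $f\in B^r_{\infty,\infty}$ by approximation. A minor additional point: you invoke the constant $C_{sm,0}$ from (V) to bound the $\phi_k$, but (V) is not among the hypotheses of this proposition; use instead the assumed uniform boundedness of $\{\phi_k\}$ directly (this is what the paper does, writing the bound as $C_\phi$).
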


\begin{proof}
Consider $f\in B^r_{\infty,\infty}(\mathbb{R})\cap L^2(\mathbb{R})$. We choose a representative of $f$ in \eqref{eq:TF} with 
coefficients $a_k=\langle f,\phi_k\rangle$ and $b_{j,k}=\langle f,\psi_{j,k}\rangle$.
On one hand, due to (II) and the uniform boundedness of $\Phi$, there exists $C_\phi>0$  such that
\begin{equation}\label{eq:coarse_bound}
|a_k|\;\leq\;C_\phi\;\|f\|_\infty,\quad k\in\mathbb{Z}.
\end{equation}
On the other hand, with $x_{j,k}$ as in (IV), we can exploit the vanishing moment of the tight frame and the regularity of $f$ to get
\begin{equation} \label{eq:vm_trick} \begin{array}{rcl}
\left|b_{j,k}\right| &=& \left|\;\int_{\mathbb{R}}\;f(x)\;\psi_{j,k}(x)\;dx\;\right| \;=\; \left|\;\int_{\mathbb{R}}\;\left(\;f(x)\;-\;f(x_{j,k})\;\right)\;\psi_{j,k}(x)\;dx\;\right| \\ \\
&\leq& \|f\|_{B^r_{\infty,\infty}} \int_{\mathbb{R}}\;|x-x_{j,k}|^r\;\left|\psi_{j,k}(x)\right|\;dx \\ \\
&=& \|f\|_{B^r_{\infty,\infty}} \int_{\mathbb{R}}\;|x|^r\;\left|\psi_{j,k}(x+x_{j,k})\right|\;dx\;\leq\; 2^{-j\left(r+\frac{1}{2}\right)}\;C_{vm,r}\;\|f\|_{B^r_{\infty,\infty}}
\end{array}\end{equation}
For a general $f\in B^r_{\infty,\infty}(\mathbb{R})$ the claim follows by a density argument. Thus, there exists a constant $C>0$ such that
$\|f\|_{B_{\infty, \infty}^r} \ge C \|(a,b)\|_{\ell_{\infty,\infty}^r}$.
\end{proof}

\begin{rem} \label{rem:only_vm}
In Proposition \ref{prop:Holder_nec}, there is no need for the tight frame to be more than continuous - only the vanishing moment matters. The same phenomenon happens for the inclusion $\subseteq$ in Theorem \ref{thm:Holder_gen} - the number of vanishing moments being the key ingredient
for its proof. On the other hand, the regularity of the wavelet tight frame $\mathcal{F}$
plays the key role both in Proposition \ref{prop:Holder_suff} and in the proof of the inclusion $\supseteq$ in Theorem \ref{thm:Holder_gen}. This explains the duality between assumptions (IV) and (V).
\end{rem}

We are now ready to complete the proof of Theorem \ref{thm:Holder_gen}.

\begin{proof}[Proof of Theorem \ref{thm:Holder_gen}]
For the case $r\in(0,1)$ see Propositions \ref{prop:Holder_suff} and \ref{prop:Holder_nec}. Let $r=n+\alpha$, with $n\in\mathbb{N}$ and $\alpha\in(0,1)$.\\

\underline{$1^{st}$ step, proof of ``$\supseteq$"}: similarly to Proposition \ref{prop:Holder_suff}, we define constants $C_a$ and $C_b$ as in 
\eqref{eq:coeff_bound} and make use of the estimates in \eqref{es:f0} and \eqref{es:g} to conclude that $f\in L^\infty(\mathbb{R})$. The next step is to show the existence of the $n$-th derivative  $g^{(n)}$ of $g$ in \eqref{def:f0_g}. This follows by uniform convergence since, for every $x\in\mathbb{R}$ and $0\leq \ell \leq n<r$, by (V), we have
\[
\left|\;\sum\limits_{j\in\mathbb{N}}\;\sum\limits_{k\in\mathbb{Z}}\;b_{j,k}\;\psi^{(\ell)}_{j,k}(x)\right| \;\leq\; \sum\limits_{j\in\mathbb{N}}\;\sum\limits_{k\in\mathbb{Z}}\;|b_{j,k}|\;|\psi^{(\ell)}_{j,k}(x)|\;\leq\; C_b\;C_\Gamma\;C_{sm,\ell}\;\sum\limits_{j\in\mathbb{N}}\; 2^{-j(r-\ell)}\;<\;\infty,
\]
The same argument as in  Proposition \ref{prop:Holder_suff} leads to $g^{(n)}\in\mathcal{C}^\alpha(\mathbb{R})$ and, thus, $f\in\mathcal{C}^r(\mathbb{R})$.

\underline{$2^{nd}$ step, proof of ``$\subseteq$" resembles \cite{MR1083586}}: similarly to Proposition \ref{prop:Holder_nec}, we  consider $f\in B^r_{\infty,\infty}(\mathbb{R})\cap L^2(\mathbb{R})$ and the uniform bound for $|\langle f, \phi_k\rangle|$ is obtained as in \eqref{eq:coarse_bound}. Exploiting the first $n$ vanishing moments of the tight frame we have
\[\begin{array}{rcl}
\left|\langle f,\psi_{j,k}\rangle\right| &=& \left|\;\int_{\mathbb{R}}\;f(x)\;\psi_{j,k}(x)\;dx\;\right| \\ \\
&=& \left|\;\int_{\mathbb{R}}\;\left(\;f(x)-f(x_{j,k})-\sum\limits_{\ell=1}^{n-1}\frac{f^{(\ell)}(x_{j,k})}{\ell!}(x-x_{j,k})^\ell\;\right)\;\psi_{j,k}(x)\;dx\;\right|\;,
\end{array}\]
where the $x_{j,k}$ are as in (IV). Using the property of the Taylor expansion of $f$ centered in $x_{j,k}$ with the Lagrange remainder term, we have that, for every $x\in\mathbb{R}$, there exists a measurable $\xi(x)\in\mathbb{R}$, with $|\xi(x)-x_{j,k}|\leq|x-x_{j,k}|$, such that
\[\left|\langle f,\psi_{j,k}\rangle\right|\;\leq\;\left|\;\int_{\mathbb{R}}\;\frac{f^{(n)}(\xi(x))}{n!}(x-x_{j,k})^n\;\psi_{j,k}(x)\;dx\;\right|\;.\]
Now we can exploit $n+1$ vanishing moments, the H\"older regularity $\alpha$ of $f^{(n)}$ and (IV) to get
\begin{equation}\label{(18a)}
\begin{array}{rcl}
\left|\langle f,\psi_{j,k}\rangle\right| &\leq& \frac{1}{n!}\left|\;\int_{\mathbb{R}}\;\left(\;f^{(n)}(\xi(x))-f^{(n)}(x_{j,k})\;\right)(x-x_{j,k})^n\;\psi_{j,k}(x)\;dx\;\right| \\ \\
&\leq&  \frac{\|f^{(n)}\|_{B^\alpha_{\infty,\infty}}}{n!}\;\int_{\mathbb{R}}\;\left|\xi(x)-x_{j,k}\right|^\alpha\;|x-x_{j,k}|^n\;|\psi_{j,k}(x)|\;dx \\ \\
&\leq&  \frac{\|f^{(n)}\|_{B^\alpha_{\infty,\infty}}}{n!}\;\int_{\mathbb{R}}\;|x-x_{j,k}|^{r}\;|\psi_{j,k}(x)|\;dx \\ \\
&\leq&  \frac{\|f^{(n)}\|_{B^\alpha_{\infty,\infty}}}{n!}\;\int_{\mathbb{R}}\;|y|^{r}\;|\psi_{j,k}(y+x_{j,k})|\;dy\;\leq\; 2^{-j\left(r+\frac{1}{2}\right)}\frac{C_{vm,r}\;\|f^{(n)}\|_{B^\alpha_{\infty,\infty}}}{n!}.
\end{array}
\end{equation}
Thus, the claim follows.
\end{proof}

\begin{rem}
If $s\in\mathbb{N}$ and $v\geq s$ in Theorem \ref{thm:Holder_gen}, then the wavelet tight frame $\mathcal{F}$ does not need to belong to $\mathcal{C}^s(\mathbb{R})$. It suffices to have
$\mathcal{F} \subset \mathcal{C}^{s-1}(\mathbb{R})$ with the $(s-1)$-st derivatives of its elements being Lipschitz-continuous.
\end{rem}

%%%%%%%%%%%%%%%%%%%%%%%%%%%%%%%%%%%%%%%%%%%%%%%%%%%%%%%%%%%%%%%%%%%%%%%%%%%%
\section{Characterization of H\"older-Zygmund spaces $B^r_{\infty,\infty}(\mathbb{R})$, $r\in\mathbb{N}$} \label{sec:Zyg}
%%%%%%%%%%%%%%%%%%%%%%%%%%%%%%%%%%%%%%%%%%%%%%%%%%%%%%%%%%%%%%%%%%%%%%%%%%%%

It is well known \cite{MR1228209} that the H\"older spaces with integer H\"older exponents cannot
be characterized via either a wavelet or a wavelet tight frame system $\mathcal{F}$ .
Indeed, if $r=1$, the estimate \eqref{eq:A_bound} does not follow from \eqref{eq:coeff_bound}.
Thus, similarly to Theorem \ref{thm:wave_char}, the natural spaces in this context are the
H\"older-Zygmund spaces $B^{r}_{\infty,\infty}(\mathbb{R})$ for $r \in \mathbb{N}$.
This section is devoted to the proof of the wavelet tight frame characterization of such spaces, 
see Theorem \ref{thm:Zygmund}. The results of  Theorems \ref{thm:Holder_gen} and \ref{thm:Zygmund} yield Theorem \ref{thm:main}.

\begin{thm} \label{thm:Zygmund} Let $s>0$ and $v\in\mathbb{N}$.
Assume $\mathcal{F} \subset \mathcal{C}^s(\mathbb{R})$ satisfies (I)-(V) with $v$ vanishing moments.
Then, for $0<r<\min(s,v)$, $r\in\mathbb{N}$,
	\[
	B^{r}_{\infty,\infty}(\mathbb{R})\;=\; \Big\{\; \sum_{k\in\mathbb{Z}}a_k\phi_k+\sum_{j\in\mathbb{N}}\sum_{k\in\mathbb{Z}}b_{j,k}\psi_{j,k}\;: \; (a,b) \in \ell_{\infty,\infty}^r \ \hbox{with} \ \ a=\{a_k\}_{k \in \mathbb{Z}}, \ b=\{b_{j,k}\}_{j\in\mathbb{N}, k \in \mathbb{Z}} \Big\}.
\]
\end{thm}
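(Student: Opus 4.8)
The plan is to mirror the structure of the proof of Theorem~\ref{thm:Holder_gen}, splitting the set equality into the two inclusions and handling the integer case $r\in\mathbb{N}$ by replacing the first-order modulus of smoothness (a single difference) with the second-order Zygmund-type modulus (a symmetric second difference), exactly as in the classical continuous-wavelet-transform arguments of \cite{MR1162107,MR1228209}. Write $f=f_0+g$ with $f_0=\sum_k a_k\phi_k$ and $g=\sum_{j\in\mathbb{N}}\sum_k b_{j,k}\psi_{j,k}$, and set $C_a,C_b$ as in \eqref{eq:coeff_bound}. The case $r=1$ is the model case; the case $r\in\mathbb{N}$, $r\geq2$, is obtained afterwards by differentiating $r-1$ times, just as the non-integer case was reduced by differentiating $\lfloor r\rfloor$ times.

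For the inclusion ``$\supseteq$'': as in Proposition~\ref{prop:Holder_suff} and the first step of the proof of Theorem~\ref{thm:Holder_gen}, the estimates \eqref{es:f0} and \eqref{es:g} give $f\in L^\infty(\mathbb{R})$, and, when $r\geq2$, uniform convergence of the termwise-differentiated series (using (V) and $r-1<r$) gives that $g$ has a continuous $(r-1)$-st derivative, with $g^{(r-1)}=\sum_{j\in\mathbb{N}}\sum_k b_{j,k}\,\psi_{j,k}^{(r-1)}$; since $f_0\in\mathcal{C}^s(\mathbb{R})\subseteq\Lambda(\mathbb{R})$ after $r-1$ derivatives (here $r-1<s$), it suffices to show $g^{(r-1)}\in\Lambda(\mathbb{R})$. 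So fix $x,h\in\mathbb{R}$, choose $J\in\mathbb{Z}$ with $2^{-J}<|h|\leq2^{-J+1}$, and estimate
\[
\big|g^{(r-1)}(x+h)-2g^{(r-1)}(x)+g^{(r-1)}(x-h)\big|\;\leq\;\sum_{j\in\mathbb{N}}\sum_{k\in\mathbb{Z}}|b_{j,k}|\,\big|\psi_{j,k}^{(r-1)}(x+h)-2\psi_{j,k}^{(r-1)}(x)+\psi_{j,k}^{(r-1)}(x-h)\big|,
\]
and split the $j$-sum into $A=\sum_{j=1}^{J-1}$ and $B=\sum_{j=J}^\infty$ as in \eqref{eq:AB1}. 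In $A$ (the fine scales relative to $|h|$), bound the symmetric second difference of $\psi_{j,k}^{(r-1)}$ by $\|\psi_{j,k}^{(r)}\|_\infty\,|h|^2$—this is the Zygmund gain of one extra power of $|h|$ that replaces the $\epsilon$-trick of \eqref{eq:A_bound}—use (V) with exponent $r\leq s$ to get $\|\psi_{j,k}^{(r)}\|_\infty\leq C_{sm,r}2^{j(r+1/2)}$, and use (III) to bound the number of nonzero $k$ by $3C_\Gamma$; this yields $A\leq C\,C_\Gamma C_{sm,r}C_b\sum_{j=1}^{J-1}2^{(J-j)}2^{-J}\cdot$(geometric in $2^{j-J}$)$\cdot|h|$-type bound that sums to $O(|h|)$ after accounting for the prefactor $2^{j(r-1+1/2)}$ coming from $g^{(r-1)}$. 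In $B$ (the coarse scales), bound the second difference trivially by $4\|\psi_{j,k}^{(r-1)}\|_\infty\leq 4C_{sm,r-1}2^{j(r-1+1/2)}$ and, since each term already carries $2^{-j(r-1)}$ ... wait—here one must be slightly careful: one bounds $|b_{j,k}|\,|\text{2nd diff of }\psi^{(r-1)}_{j,k}|\leq C_b2^{-j(r+1/2)}\cdot 4C_{sm,r-1}2^{j(r-1+1/2)}=4C_bC_{sm,r-1}2^{-j}$, which sums geometrically from $j=J$ and produces $O(2^{-J})=O(|h|)$. Hence $|g^{(r-1)}(x+h)-2g^{(r-1)}(x)+g^{(r-1)}(x-h)|/|h|$ is uniformly bounded, so $g^{(r-1)}\in\Lambda(\mathbb{R})$ and $f\in B^r_{\infty,\infty}(\mathbb{R})$ with the norm bound $\|f\|_{B^r_{\infty,\infty}}\leq C\|(a,b)\|_{\ell^r_{\infty,\infty}}$.

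For the inclusion ``$\subseteq$'': this requires essentially no change from the second step of the proof of Theorem~\ref{thm:Holder_gen}. Take $f\in B^r_{\infty,\infty}(\mathbb{R})\cap L^2(\mathbb{R})$, use the tight-frame expansion \eqref{eq:TF} with $a_k=\langle f,\phi_k\rangle$, $b_{j,k}=\langle f,\psi_{j,k}\rangle$; the bound $|a_k|\leq C_\phi\|f\|_\infty$ comes as in \eqref{eq:coarse_bound}, and for the $b_{j,k}$ one subtracts the degree-$(r-1)$ Taylor polynomial of $f$ at $x_{j,k}$ (killed by the first $r$ vanishing moments, which we have since $r<v$), writes the remainder via the Lagrange form, and then—because $f^{(r-1)}\in\Lambda(\mathbb{R})$ rather than merely Hölder—applies the standard estimate $|f^{(r-1)}(\xi)-f^{(r-1)}(x_{j,k})|\lesssim\|f^{(r-1)}\|_\Lambda\,|\xi-x_{j,k}|\,(1+\log^+(1/|\xi-x_{j,k}|))$? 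No: more cleanly, for $r\in\mathbb{N}$ one instead uses that $\|\Delta^{r+1}_h f\|_\infty\lesssim\|f\|_{B^r_{\infty,\infty}}|h|^r$ together with $r+1$ vanishing moments, writing $\int f\psi_{j,k}$ as an average of $(r+1)$-th differences of $f$ against $\psi_{j,k}$; this is exactly the argument behind \eqref{eq:Besov} with $[r]+1=r+1$ and avoids pointwise Taylor remainders altogether. Either way one arrives at $|b_{j,k}|\leq C\,2^{-j(r+1/2)}\|f\|_{B^r_{\infty,\infty}}$, hence $\|(a,b)\|_{\ell^r_{\infty,\infty}}\leq C\|f\|_{B^r_{\infty,\infty}}$, and a density argument extends this to all of $B^r_{\infty,\infty}(\mathbb{R})$. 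Finally, combining the two inclusions and invoking Theorem~\ref{thm:Holder_gen} for the non-integer exponents gives Theorem~\ref{thm:main} as a whole.

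The main obstacle is the fine-scale sum $A$ in the ``$\supseteq$'' direction: at $r\in\mathbb{N}$ the plain Hölder estimate used in \eqref{eq:A_bound} fails (the series $\sum 2^{0\cdot j}$ diverges), and one genuinely needs the cancellation in the \emph{symmetric second difference} to extract the extra factor $|h|$ that makes the geometric series converge. Getting the bookkeeping of the three exponents right—the $2^{-j(r+1/2)}$ from the coefficients, the $2^{j(r-1+1/2)}$ or $2^{j(r+1/2)}$ from (V) applied to $\psi^{(r-1)}_{j,k}$ or $\psi^{(r)}_{j,k}$, and the $2^{J}\sim|h|^{-1}$ conversions—so that $A+B=O(|h|)$ uniformly in $x$ is the only place where care is required; everything else is a transcription of the arguments already given for Theorem~\ref{thm:Holder_gen}.
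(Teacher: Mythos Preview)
Your ``$\supseteq$'' argument follows the paper's route but contains one genuine slip: the bound ``symmetric second difference of $\psi_{j,k}^{(r-1)}$ by $\|\psi_{j,k}^{(r)}\|_\infty\,|h|^2$'' is false---from $\|(\psi_{j,k}^{(r-1)})'\|_\infty$ alone the second difference is only $O(|h|)$, which makes $A$ diverge. The paper fixes this exactly as in the H\"older case: apply the mean value theorem once to get $|h|\,|\psi_{j,k}^{(r)}(\xi)-\psi_{j,k}^{(r)}(\eta)|$ and then invoke (V) with exponent $r+\epsilon$ for some $0<\epsilon<s-r$ to obtain $O(|h|^{1+\epsilon})$. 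After that correction your bookkeeping goes through and matches the paper.

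The substantive divergence is in ``$\subseteq$'', where the paper takes a completely different route that you did not anticipate. The paper states explicitly that one \emph{cannot} exploit the vanishing moments as in \eqref{eq:vm_trick} and instead introduces an auxiliary compactly supported \emph{orthonormal} wavelet system $\widetilde\Phi\cup\{\widetilde\Psi_\ell\}$ (e.g.\ Daubechies) in $\mathcal{C}^s(\mathbb{R})$ with $v$ vanishing moments. One expands $f=\widetilde f_0+\widetilde g$ in that system, invokes the classical Theorem~\ref{thm:wave_char} to obtain $|\langle f,\widetilde\psi_{\ell,m}\rangle|\le C\,2^{-3\ell/2}$, and then controls $|\langle f,\psi_{j,k}\rangle|$ by $|\langle\widetilde f_0,\psi_{j,k}\rangle|+\sum_{\ell,m}|\langle f,\widetilde\psi_{\ell,m}\rangle|\,|\langle\widetilde\psi_{\ell,m},\psi_{j,k}\rangle|$. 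The cross terms $|\langle\widetilde\psi_{\ell,m},\psi_{j,k}\rangle|$ are estimated via (II)--(V) together with the already proven Theorem~\ref{thm:Holder_gen} applied at a non-integer $\alpha\in(1,s)$, and the double sum over $(\ell,m)$ is split at $\ell=j$ and shown to be $O(2^{-3j/2})$ with no logarithm.

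Neither of your two sketches for ``$\subseteq$'' is a proof. The first (Taylor remainder in Lagrange form) fails outright because $f^{(r)}$ need not exist for $f\in B^r_{\infty,\infty}(\mathbb{R})$ with integer $r$; using instead the integral remainder in terms of $f^{(r-1)}$ together with the pointwise Zygmund modulus $|f^{(r-1)}(y)-f^{(r-1)}(x_{j,k})|\lesssim |y-x_{j,k}|(1+\log^+(1/|y-x_{j,k}|))$ yields only $|b_{j,k}|\lesssim j\,2^{-j(r+1/2)}$, a logarithmic loss that is fatal for $\ell^r_{\infty,\infty}$. The second sketch (``write $\int f\psi_{j,k}$ as an average of $(r+1)$-th differences'') is in the right spirit but is not an identity you can simply quote: the naive route $\psi_{j,k}=D^{r+1}\Psi_{j,k}$ followed by a difference-quotient limit blows up like $|h|^{-1}$. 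A direct argument \emph{can} be made to work (for $r=1$: subtract the secant-line approximation with slope $c=(2h)^{-1}[f(x_{j,k}+h)-f(x_{j,k}-h)]$, $h\sim 2^{-j}$, and prove by dyadic bisection that the error is $O(2^{-j})$ uniformly on $\supp\psi_{j,k}$), but this requires an additional lemma that your proposal does not supply. The paper's auxiliary-wavelet detour avoids all of this by bootstrapping from the shift-invariant case.
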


The significant case is when $r=1$, since for all other integers one usually argues similarly to the proof of Theorem \ref{thm:Holder_gen}. 
In the case $r=1$, for the inclusion $\supseteq$ we
use the argument similar to the one in Proposition \ref{prop:Holder_suff}. On the other hand, for the inclusion $\subseteq$, we cannot exploit the vanishing moments as done in \eqref{eq:vm_trick}. To circumvent this problem, inspired by \cite{MR1289147}, we consider an auxiliary orthogonal wavelet system which satisfies the assumptions of Theorem \ref{thm:wave_char}. This way we get a convenient expansion for $f\in B^r_{\infty,\infty}(\mathbb{R})$ and make use of the wavelet characterization of $B^{\alpha}_{\infty,\infty}(\mathbb{R})$ for $\alpha\in(r,s)\setminus\mathbb{N}$ in Theorem \ref{thm:Holder_gen}.

\begin{proof}
We only prove the claim for $r=1<\min(s,v)$. In this case $B^r_{\infty,\infty}(\mathbb{R})=\Lambda(\mathbb{R})\cap L^\infty(\mathbb{R})$. The general case follows using an argument similar to the one in the proof of Theorem \ref{thm:Holder_gen}. \\

\underline{$1^{st}$ step, proof of ``$\supseteq$"}: similarly to Proposition \ref{prop:Holder_suff}, we define constants $C_a$ and $C_b$ as in 
\eqref{eq:coeff_bound} and make use of the estimates in \eqref{es:f0} and \eqref{es:g} to conclude that $f\in L^\infty(\mathbb{R})$. 
Let $x\in\mathbb{R}$. It suffice to consider $h>0$. Then, for $r=1$, we obtain
\[\begin{array}{l}
\left|\;g(x+h)\;-\;2\;g(x)\;+\;g(x-h)\;\right| \;\leq\; \\ \\
\qquad \leq\; \sum\limits_{j\in\mathbb{N}}\;\sum\limits_{k\in\mathbb{Z}}\;\left|\;b_{j,k}\;\right|\;\left|\;\psi_{j,k}(x+h)\;-\;2\;\psi_{j,k}(x)\;+\;\psi_{j,k}(x-h)\;\right| \\ \\
\qquad \le\; C_b \, h\;\sum\limits_{j\in\mathbb{N}}\;\frac{2^{-j\frac{3}{2}}}{h}\;\sum\limits_{k\in\mathbb{Z}}\;\left|\;\psi_{j,k}(x+h)\;-\;2\;\psi_{j,k}(x)\;+\;\psi_{j,k}(x-h)\;\right|\;.
\end{array}\]
Since there exists $J\in\mathbb{Z}$ such that
\[2^{-J}\;<\;h\;\leq\;2^{-J+1}\;,\]
we have
\[\begin{array}{l}
\left|\;g(x+h)\;-\;2\;g(x)\;+\;g(x-h)\;\right| \;\leq\; \\ \\
\qquad \leq\; C_bh\;\sum\limits_{j\in\mathbb{N}}\;2^{J-j\frac{3}{2}}\;\sum\limits_{k\in\mathbb{Z}}\;\left|\;\psi_{j,k}(x+h)\;-\;2\;\psi_{j,k}(x)\;+\;\psi_{j,k}(x-h)\;\right|.\\ \\
\end{array}\]
To estimate $\left|\;g(x+h)\;-\;2\;g(x)\;+\;g(x-h)\;\right|$, we consider
\[\begin{array}{c}
A\;=\;\sum\limits_{j=1}^{J-1}\;2^{J-j\frac{3}{2}}\;\sum\limits_{k\in\mathbb{Z}}\;\left|\;\psi_{j,k}(x+h)\;-\;2\;\psi_{j,k}(x)\;+\;\psi_{j,k}(x-h)\;\right|\quad\textrm{and} \\ \\
B\;=\;\sum\limits_{j=J}^{\infty}\;2^{J-j\frac{3}{2}}\;\sum\limits_{k\in\mathbb{Z}}\;\left|\;\psi_{j,k}(x+h)\;-\;2\;\psi_{j,k}(x)\;+\;\psi_{j,k}(x-h)\;\right|.
\end{array}\]
If $J\leq 1$, $A=0$. Otherwise, since the tight frame $\cal{F}$ belongs to $\mathcal{C}^s(\mathbb{R})$, $s>1$, we use the mean value theorem twice for every framelet and find {
$\xi_{j,k}(x)\in[x,x+h]$ and $\eta_{j,k}(x)\in[x-h,x]$ such that
\[\begin{array}{rcl}
\left|\;\psi_{j,k}(x+h)\;-\;2\;\psi_{j,k}(x)\;+\;\psi_{j,k}(x-h)\;\right|&=& \left|\;\psi_{j,k}(x+h)\;-\;\psi_{j,k}(x)\;-\;\left(\;\psi_{j,k}(x)\;-\;\psi_{j,k}(x-h)\;\right)\;\right| \\ \\
&=& h\; \left|\;\psi'_{j,k}(\xi_{j,k}(x))\;-\;\psi'_{j,k}(\eta_{j,k}(x))\;\right|.
\end{array}\]
Now, for $\epsilon>0$ with $r=1<1+\epsilon<s$, using (V) we get
\[\begin{array}{rcl}
\left|\;\psi_{j,k}(x+h)\;-\;2\;\psi_{j,k}(x)\;+\;\psi_{j,k}(x-h)\;\right|&=& C_{sm,1+\epsilon}\;2^{j\left(\epsilon+\frac{3}{2}\right)}\;h\; |\;\xi_{j,k}(x)\;-\;\eta_{j,k}(x)\; |^{\epsilon} \\ \\
&\leq&C_{sm,1+\epsilon}\;2^{j\left(\epsilon+\frac{3}{2}\right)+\epsilon}\;h^{1+\epsilon} \;\leq\; C_{sm,1+\epsilon}\;2^{(j-J)(1+\epsilon)+\frac{j}{2}+1+2\epsilon}.
\end{array}\]
Moreover, the sum in $A$ over $k$ has at most
\[|\Gamma_j(x+h)|\;+\;|\Gamma_j(x-h)|\;+\;|\Gamma_j(x)|\;\leq\;3\;C_\Gamma\]
non-zero summands and, thus, we get
\[A\;\leq\;C_\Gamma\;C_{sm,1+\epsilon}\;2^{1+2\epsilon}\;3\;\sum\limits_{j=1}^{J-1}\;(2^{-\epsilon})^{J-j}.\]
Since $\epsilon>0$, $A$ is bounded. To conclude the proof, we observe that
\[B\;\leq\;C_\Gamma\;C_{sm,0}\;3\;\sum\limits_{j=J}^{\infty}\;2^{J-j}\;=\;6\;C_\Gamma\;C_{sm,0}.\]}
Thus,
$g\in B^1_{\infty,\infty}(\mathbb{R})$ and, therefore, $f\in B^1_{\infty,\infty}(\mathbb{R})$ with $\|f\|_{B_{\infty,\infty}^1} \le C \|(a,b)\|_{\ell_{\infty, \infty}^1}$ for some constant $C>0$.

\underline{$2^{nd}$ step, proof of ``$\subseteq$"}: similarly to Proposition \ref{prop:Holder_nec} we only consider 
$f\in \Lambda(\mathbb{R})\cap L^2(\mathbb{R})$. The uniform bound for $|\langle f, \phi_k\rangle|$ is obtained similarly to \eqref{eq:coarse_bound}. 
To obtain the bound for $|\langle f,\psi_{j,k}\rangle|$, we 
let $\widetilde{\Phi}\cup\{\widetilde{\Psi}_\ell\}_{\ell\in\mathbb{N}}\subseteq \mathcal{C}^s(\mathbb{R})$ be an auxiliary compactly supported orthogonal wavelet system with $v$ vanishing moments (e.g. Daubechies $2n$-tap wavelets \cite{MR1162107} with large enough $n\in\mathbb{N}$). $\widetilde{\Phi}\cup\{\widetilde{\Psi}_\ell\}_{\ell\in\mathbb{N}}$ satisfies the assumptions of Theorem \ref{thm:wave_char} and fulfills (I)-(V), with appropriate
$\widetilde{\Gamma}_j$ and $\widetilde{C}_{supp}>0$, $\widetilde{C}_\Gamma>0$, $\widetilde{C}_{vm,r}>0$ and $\widetilde{C}_{sm,r}>0$.
Then, from \eqref{eq:TF}, we have $f(x)\;=\;\widetilde{f}_0(x)\;+\;\widetilde{g}(x)$, $x\in\mathbb{R}$, where
\[
\widetilde{f}_0(x)\;=\;\sum\limits_{m\in\mathbb{Z}}\;\langle f,\widetilde{\phi}_m\rangle\;\widetilde{\phi}_m(x)\quad\textrm{and}\quad\widetilde{g}(x)\;=\;\sum\limits_{\ell\in\mathbb{N}}\;\sum\limits_{m\in\mathbb{Z}}\;\langle f,\widetilde{\psi}_{\ell,m}\rangle\;\widetilde{\psi}_{\ell,m}(x).
\]
Thus, for every $j\in\mathbb{N}$ and $k\in\mathbb{Z}$, we get
\[
|\langle f,\psi_{j,k}\rangle|\;\leq\;\left|\langle\;\widetilde{f}_0,\;\psi_{j,k}\;\rangle\right|\;+\;\sum\limits_{\ell\in\mathbb{N}}\;\sum\limits_{m\in\mathbb{Z}}\;|\langle f,\widetilde{\psi}_{\ell,m}\rangle|\;|\langle \widetilde{\psi}_{\ell,m}, \psi_{j,k}\rangle|\;.
\]
Let $\alpha\in(1,s)\setminus\mathbb{N}$. Since both tight frames belong to $\mathcal{C}^s(\mathbb{R})\subseteq\mathcal{C}^\alpha(\mathbb{R})$, the function $\widetilde{f}_0$, which is locally the finite sum of $\mathcal{C}^\alpha$-functions,
belongs to $\mathcal{C}^\alpha(\mathbb{R})$, and, by Theorem \ref{thm:Holder_gen}, there exists $C_1>0$, such that
\[
\sup_{k\in\mathbb{Z}}|\langle \widetilde{f}_0,\psi_{j,k}\rangle|\;\leq\;C_1\;2^{-j\left(\alpha+\frac{1}{2}\right)}\;\leq\;C_1\;2^{-j\frac{3}{2}},\quad j\in\mathbb{N}.
\]
Moreover, by Theorem \ref{thm:wave_char} and due to $f \in \Lambda(\mathbb{R})$, there exists $C_2>0$ such that
\[\sup_{m\in\mathbb{Z}}|\langle f,\widetilde{\psi}_{\ell,m}\rangle|\;\leq\;C_2\;2^{-\ell\frac{3}{2}}\;,\quad \ell\in\mathbb{N}.\]
Thus,{
\begin{equation}\label{eq:Zyg1}\begin{array}{rcl}
|\langle f,\psi_{j,k}\rangle|&\leq&C_1\;2^{-j\frac{3}{2}}\;+\;C_2\;\sum\limits_{\ell\in\mathbb{N}}\;2^{-\ell\frac{3}{2}}\;\sum\limits_{m\in\mathbb{Z}}\;|\langle \widetilde{\psi}_{\ell,m}, \psi_{j,k}\rangle|\\ \\
&=& C_1\;2^{-j\frac{3}{2}}\;+\;C_2\left(\sum\limits_{\ell=1}^{j-1}\;2^{-\ell\frac{3}{2}}\;\sum\limits_{m\in\mathbb{Z}}\;|\langle \widetilde{\psi}_{\ell,m}, \psi_{j,k}\rangle|\;+\;\sum\limits_{\ell=j}^{\infty}\;2^{-\ell\frac{3}{2}}\;\sum\limits_{m\in\mathbb{Z}}\;|\langle \widetilde{\psi}_{\ell,m}, \psi_{j,k}\rangle|\right) \\ \\
&=& C_1\;2^{-j\frac{3}{2}}\;+\;C_2\;\left(\;A\;+\;B\;\right).
\end{array}\end{equation}
The sums in \eqref{eq:Zyg1} over $m$ have at most
\[|\widetilde{\Gamma}_\ell(\hbox{supp}(\psi_{j,k}))|\;\leq\;\widetilde{C}_{\Gamma}\;(\;2^\ell\;|\hbox{supp}(\psi_{j,k})|\;+\;1\;)\;\leq\;\widetilde{C}_{\Gamma}\;(\;C_{supp}\;2^{\ell-j}\;+\;1\;)\]
non-zero summands. When $\ell < j$,  by assumption (V) for $\widetilde{\psi}_{\ell,m}$ and \eqref{(18a)} with 
$f=\widetilde{\psi}_{\ell,m}$, due to Theorem \ref{thm:Holder_gen}, we have
\begin{eqnarray*}
|\langle \widetilde{\psi}_{\ell,m},\psi_{j,k}\rangle| \leq  C_{vm, \alpha} \, 2^{-j(\alpha+\frac{1}{2})} \|\widetilde{\psi}_{\ell,m}\|_{\cal{C}^\alpha}
\le \,C_{vm,\alpha} \, \widetilde{C}_{sm,\alpha} \;2^{(\ell-j)\left(\alpha+\frac{1}{2}\right)},
\end{eqnarray*}
uniformly in $m$ and $k$. Thus, substituting  $\ell'=j-\ell$, we obtain
\begin{equation}\label{eq:Zyg3}
\begin{array}{rcl}
A &\leq& C_{vm,\alpha}\;\widetilde{C}_\Gamma\;\widetilde{C}_{sm,\alpha}\;\sum_{\ell=1}^{j-1}\;2^{-\ell\frac{3}{2}}\;2^{(\ell-j)\left(\alpha+\frac{1}{2}\right)}\;(\;C_{supp}\;2^{\ell-j}\;+\;1\;)\\ \\
&=& C_{vm,\alpha}\;\widetilde{C}_\Gamma\;\widetilde{C}_{sm,\alpha}\;2^{-j\frac{3}{2}}\; \sum_{\ell'=1}^{j-1}\;2^{-\ell'\left(\alpha-1\right)}\;(\;C_{supp}\;2^{-\ell'}\;+\;1\;)\\ \\
&\leq&  C_3\;2^{-j\frac{3}{2}},
\end{array}
\end{equation}
for some $C_3>0$, due to the fact that $\alpha>1$. \\
On the other hand, when $\ell\geq j$, using (II) and (V), we get
\[
 |\langle \widetilde{\psi}_{\ell,m},\psi_{j,k}\rangle|\;\leq\;C_{sm,0}\;\widetilde{C}_{sm,0}\;2^{(j+\ell)/2}\;\min\left(C_{supp}2^{-j},\widetilde{C}_{supp}2^{-\ell}\right)\;=\;C_4\;2^{(j-\ell)/2},
\]
uniformly in $m$ and $k$. Thus, after the substitution $\ell'=\ell-j$, we obtain
\begin{equation}\label{eq:Zyg4}
\begin{array}{rcl}
B &\leq& C_4\;\widetilde{C}_\Gamma\;\sum_{\ell=j}^{\infty}\;2^{-\ell\frac{3}{2}}\;(\;C_{supp}\;2^{\ell-j}\;+\;1\;)\;2^{(j-\ell)/2}\\ \\
&=& C_4\;\widetilde{C}_\Gamma\;2^{-j\frac{3}{2}}\; \sum_{m=0}^{\infty}\;2^{-2m}\;(\;C_{supp}\;2^{m}\;+\;1\;) \le  C_5\;2^{-j\frac{3}{2}}
\end{array}
\end{equation}} 
for some constant $C_5>0$. Combining \eqref{eq:Zyg1}, \eqref{eq:Zyg3} and \eqref{eq:Zyg4} we finally get
\[
\sup_{k\in\mathbb{Z}}|\langle f,\psi_{j,k}\rangle|\;\leq\;\left( C_1+C_2 C_3+C_2 C_5\right)\;2^{-j\frac{3}{2}}\;,\quad  j\in\mathbb{N}.
\]
Thus, the claim follows, i.e., there exists a constant $C>0$ such that $\|f\|_{B_{\infty,\infty}^1} \ge C \|(a,b)\|_{\ell_{\infty, \infty}^1}$.
\end{proof}

\begin{rem} \label{rem:norm_eq} The norm equivalence between the Besov norm $\|\cdot\|_{B_{\infty, \infty}^r}$ and $\|\cdot\|_{\ell_{\infty,\infty}^r}$, $r \in (0,\infty)$,
is a consequence of Theorem \ref{thm:main} and the Open Mapping Theorem.
\end{rem}

%%%%%%%%%%%%%%%%%%%%%%%%%%%%%%%%%%%%%%%%%%%%%%%%%%%%%%%%%%%%%%%%%%%%%%%%%%%%
\section{H\"older-Zygmund regularity of semi-regular subdivision} \label{sec:num}
%%%%%%%%%%%%%%%%%%%%%%%%%%%%%%%%%%%%%%%%%%%%%%%%%%%%%%%%%%%%%%%%%%%%%%%%%%%%

In this section, we show how to apply Theorem \ref{thm:main} for estimating  the H\"older-Zygmund regularity of a semi-regular subdivision limit  
from the decay of its inner products (frame coefficients) with respect to a given tight frame $\mathcal{F}$ satisfying (I)-(V) for some $s>0$ and $v\in\mathbb{N}$.
In Subsection \ref{subsec:reg_estimates},  in a 
general irregular setting, we discuss how to obtain such regularity estimates using the result of Theorem \ref{thm:main}. In
Subsection \ref{subsec:coefficients}, we introduce a method for computing the frame coefficients in the semi-regular case. In Subsection \ref{subsec:numerics}, we illustrate our
results with examples of semi-regular B-spline, Dubuc-Deslauriers subdivision and semi-regular interpolatory schemes based on radial basis functions.
The latter example in the regular setting reduces to the construction in  \cite{MR2231695}.

%%%%%%%%%%%%%%%%%%%%%%%%%%%%%%%%%%%%%%%%%%%%%%
\subsection{Optimal H\"older-Zygmund exponent: two methods for its estimation} \label{subsec:reg_estimates}
%%%%%%%%%%%%%%%%%%%%%%%%%%%%%%%%%%%%%%%%%%%%%
\begin{defn} \label{defn:opt_exp}
Let $f\in L^\infty(\mathbb{R})$. We call \emph{optimal H\"older-Zygmund (regularity) exponent of $f$} the real number
\[r(f)\;=\;\sup\{\;r>0\;:\;f\in B^r_{\infty,\infty}(\mathbb{R})\;\}.\]
\end{defn}

Assume that $r(f) \in (0,\min(s,v))$ and that we are given 
\[\gamma_j\;=\;\sup_{k\in\mathbb{Z}}\;|\langle f,\psi_{j,k}\rangle|,\quad j\in\mathbb{N}.\]
By Theorem \ref{thm:main}, for every $\epsilon>0$, there exists a constant $C_\epsilon>0$ such that, for every $j\in\mathbb{N}$,
\begin{equation}\label{eq:lin_reg}
\gamma_j\;\leq\;C_\epsilon\;2^{-j\left(r(f)-\epsilon+\frac{1}{2}\right)},\quad\textrm{i.e.}\quad j\left(r(f)-\epsilon+\frac{1}{2}\right)\;-\;\log_2(C_\epsilon)\;\leq\;-\log_2(\gamma_j).
\end{equation}
From \eqref{eq:lin_reg} we infer that searching for $r(f)$ is equivalent to searching for the largest slope of a line lying under the set of points $\{(j,-\log_2(\gamma_j))\}_{j\in\mathbb{N}}$. With this interpretation in mind, the natural approach (see e.g. \cite{MR1162107}) to approximate $r(f)$ is to compute the real-valued sequence $\{r_n(f)\}_{n\in\mathbb{N}}$, where $r_n(f)-\frac{1}{2}$ is the slope of the regression line for the points $\{(j,-\log_2(\gamma_j))\}_{j=1}^{n+1}$. This method is robust, i.e. for larger $n$  the contributions of the levels $j \ge n$ become less significant, thus, the difference between $r_n(f)$ and $r_{n+1}(f)$ is small and we are able to estimate the overall distribution of $\{(j,-\log_2(\gamma_j))\}_{j\in\mathbb{N}}$. However, 
examples in Subsection \ref{subsec:numerics} illustrate that the convergence of $\{r_n(f)\}_{n\in\mathbb{N}}$ 
towards $r(f)$ is very slow. One of the main reasons for such a behavior is the value of the unknown $C_\epsilon$, which can be significant, e.g when $f\not\in B^{r(f)}_{\infty,\infty}(\mathbb{R})$.

An alternative approach for estimating the H\"older-Zygmund exponent is given by the following Proposition.

\begin{prop} \label{prop:my_test} Let $r(f)$ be the optimal H\"older-Zygmund exponent of $f\in {\cal C}^0(\mathbb{R})$.
If  
$$\; \; 0< r^*(f)\;=\;\lim_{n\rightarrow\infty}\;\log_2\left(\frac{\gamma_n}{\gamma_{n+1}}\right)\;-\;\frac{1}{2}\; < \; \min(s,v)\;,
$$
then $r^*(f)=r(f)$.
\end{prop}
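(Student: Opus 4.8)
The plan is to relate the limit quantity $r^*(f)$ to the decay rate of $\gamma_j = \sup_{k} |\langle f, \psi_{j,k}\rangle|$ and then invoke Theorem~\ref{thm:main} in both directions. The starting observation is that if $\log_2(\gamma_n/\gamma_{n+1}) \to r^*(f) + \frac12$, then for every $\delta>0$ there is $N$ such that, for all $n \ge N$,
\[
  2^{-(r^*(f)+\frac12+\delta)} \;\le\; \frac{\gamma_{n+1}}{\gamma_n} \;\le\; 2^{-(r^*(f)+\frac12-\delta)}.
\]
Telescoping these inequalities from $N$ to $j$ yields two-sided bounds
\[
  c_\delta\, 2^{-j(r^*(f)+\frac12+\delta)} \;\le\; \gamma_j \;\le\; C_\delta\, 2^{-j(r^*(f)+\frac12-\delta)}, \qquad j \ge N,
\]
for positive constants $c_\delta, C_\delta$ depending on $\delta$ (and on $\gamma_N$, $N$). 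The remaining finitely many indices $j < N$ are harmless, since the exponential bounds can always be adjusted by a constant to cover them. Hence $\{2^{j(r+\frac12)}\gamma_j\}_{j\in\mathbb N}$ is bounded precisely for $r \le r^*(f)$ (up to the $\delta$-slack), and unbounded for $r > r^*(f)$.

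Next I would feed these bounds into Theorem~\ref{thm:main}. Fix $r$ with $0 < r < \min(r^*(f), s, v)$; choosing $\delta$ small enough that $r < r^*(f)-\delta$, the upper bound gives $\{2^{j(r+\frac12)}\gamma_j\}_j \in \ell_\infty$. Writing $a_k = \langle f, \phi_k\rangle$ and $b_{j,k} = \langle f, \psi_{j,k}\rangle$, the tight-frame identity \eqref{eq:TF} expresses $f = \sum_k a_k \phi_k + \sum_{j,k} b_{j,k}\psi_{j,k}$, and the $\ell_\infty$ bound on $\{a_k\}$ follows just as in \eqref{eq:coarse_bound} (using $f \in \mathcal C^0(\mathbb R) \subset L^\infty$ locally, together with (II)); one reduces to $f \in L^2$ by the usual density/localization argument or simply works with a fixed compactly supported piece since the claim is local. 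Thus $(a,b) \in \ell^r_{\infty,\infty}$ and Theorem~\ref{thm:main} gives $f \in B^r_{\infty,\infty}(\mathbb R)$, hence $r(f) \ge r$; letting $r \uparrow \min(r^*(f),s,v) = r^*(f)$ (the hypothesis $r^*(f) < \min(s,v)$ is used here) yields $r(f) \ge r^*(f)$. Conversely, suppose $r(f) > r^*(f)$; pick $r$ with $r^*(f) < r < \min(r(f),s,v)$. Then $f \in B^r_{\infty,\infty}(\mathbb R)$, so by Theorem~\ref{thm:main} (the ``$\subseteq$'' inclusion, i.e. Proposition~\ref{prop:Holder_nec} and its higher-order analogue) the coefficients satisfy $2^{j(r+\frac12)}\gamma_j \le C$ for all $j$, i.e. $\gamma_j \le C\,2^{-j(r+\frac12)}$. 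But choosing $\delta$ with $r > r^*(f)+\delta$, the lower bound on $\gamma_j$ forces $c_\delta\,2^{-j(r^*(f)+\frac12+\delta)} \le C\,2^{-j(r+\frac12)}$, i.e. $c_\delta/C \le 2^{-j(r - r^*(f)-\delta)} \to 0$, a contradiction. Hence $r(f) \le r^*(f)$, and combining the two inequalities gives $r(f) = r^*(f)$.

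The main obstacle — and the only place real care is needed — is the lower bound $\gamma_j \ge c_\delta\,2^{-j(r^*(f)+\frac12+\delta)}$, since Theorem~\ref{thm:main} is stated as a bidirectional characterization but its contrapositive (``slow coefficient decay forces $f \notin B^r_{\infty,\infty}$'') must be extracted cleanly: one needs that the synthesis map $(a,b) \mapsto \sum a_k\phi_k + \sum b_{j,k}\psi_{j,k}$ is well-defined on $\ell^r_{\infty,\infty}$ and that the analysis coefficients of $f$ are \emph{the} coefficients appearing there, which is exactly what the tight-frame property \eqref{eq:TF} and the norm equivalence in Remark~\ref{rem:norm_eq} guarantee. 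A secondary subtlety is purely bookkeeping: the limit hypothesis only controls ratios for large $n$, so one must absorb the initial segment $j < N$ into the constants, and one must be slightly careful that $\gamma_j > 0$ for the ratios to make sense — if some $\gamma_j$ vanished, the limit would not exist, so the hypothesis that the limit exists and is positive already rules this out for large $j$. None of these points requires more than the tools already assembled in Sections~\ref{sec:Hol} and~\ref{sec:Zyg}.
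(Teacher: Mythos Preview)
Your argument is correct and follows the same overall strategy as the paper: extract the decay rate of $\gamma_j$ from the limit hypothesis and then invoke Theorem~\ref{thm:main} (equivalently, inequality~\eqref{eq:lin_reg}) in both directions. The only difference is in packaging. You telescope the ratio bounds directly to obtain explicit two-sided estimates
\[
c_\delta\,2^{-j(r^*(f)+\frac12+\delta)} \;\le\; \gamma_j \;\le\; C_\delta\,2^{-j(r^*(f)+\frac12-\delta)}
\]
and then compare the lower bound against the decay forced by Theorem~\ref{thm:main} when $r(f)>r^*(f)$. The paper instead works at the level of the auxiliary series $S(r)=\sum_j 2^{j(r+\frac12)}\gamma_j$: the ratio test shows that $S(r^*(f)-\epsilon)$ converges (yielding the upper bound on $\gamma_j$, hence $r(f)\ge r^*(f)$), and for the reverse inequality it argues that if $r(f)>r^*(f)$ then $S(r^*(f)+\delta/2)$ would simultaneously diverge by the ratio test and be majorized by a convergent geometric series via the coefficient decay from Theorem~\ref{thm:main}. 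Your telescoping version is slightly more elementary and makes the lower bound on $\gamma_j$ explicit, whereas the paper's series formulation hides it inside the divergence statement; the two are interchangeable, since the ratio test is itself proved by telescoping.
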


\begin{proof}
We first prove that $r^*(f)\leq r(f)$ and then, by contradiction, that $r^*(f)=r(f)$. 

\noindent Let $\epsilon>0$. We consider the series
\begin{equation} \label{eq:sum_test}
S(r^*(f)-\epsilon)\;=\;\sum_{j=1}^\infty\;2^{j\left(r^*(f)-\epsilon+\frac{1}{2}\right)}\;\gamma_j.
\end{equation}
By the assumption, we obtain
\begin{equation} \label{eq:rat_test}
\lim_{n\rightarrow \infty}\;\frac{2^{(n+1)\left(r^*(f)-\epsilon+\frac{1}{2}\right)}\;\gamma_{n+1}}{2^{n\left(r^*(f)-\epsilon+\frac{1}{2}\right)}\;\gamma_n}\;
=\; 2^{r^*(f)-\epsilon+\frac{1}{2}}\, \lim_{n\rightarrow \infty}\;\frac{\gamma_{n+1}}{\gamma_n}=2^{-\epsilon}\;.
\end{equation}
Thus, by the ratio test, the series $S(r^*(f)-\epsilon)$ in \eqref{eq:sum_test} converges for every $\epsilon>0$. Consequently,  the non-negative
summands of $S(r^*(f)-\epsilon)$ are uniformly bounded, i.e. there exists $C_\epsilon>0$ such that
\[
\gamma_j\;\leq\;C_\epsilon2^{-j\left(r^*(f)-\epsilon+\frac{1}{2}\right)}, \quad j \in \mathbb{N}.
\]
Therefore, by Definition \ref{defn:opt_exp} and by \eqref{eq:lin_reg}, $r^*(f)\leq r(f)$. 

\noindent On the other hand, by \eqref{eq:lin_reg}, if $r^*(f)<r(f)$, then there exists $\delta>0$ and a cosnstant $C_\delta>0$ such that
\begin{equation*}
  \gamma_j\;\leq\;C_\delta\, 2^{-j\left(r^*(f)+\delta+\frac{1}{2}\right)}, \quad j \in \mathbb{N}.
\end{equation*}
Therefore, similarly to \eqref{eq:rat_test}, we obtain that the series $S(r^*(f)+\delta/2)$ diverges and at the same time is majorized by
\[
  S(r^*(f)+\delta/2)\;=\;\sum_{j=1}^\infty\;2^{j\left(r^*(f)+\frac{\delta}{2}+\frac{1}{2}\right)}\;\gamma_j\;\leq\;C_\delta
	\;\sum_{j=1}^\infty\; 2^{-j\delta/2}\;.
\]
Thus, due to this contradiction, $r^*(f)=r(f)$.
\end{proof}

The advantage of the approach in Proposition \ref{prop:my_test} is that it eliminates the effect of the constant $C_\epsilon$ in
\eqref{eq:lin_reg}. Even though the existence of $r^*(f)$ is not guaranteed and the elements of the sequence
\[r^*_n(f)\;=\;\log_2\left(\frac{\gamma_n}{\gamma_{n+1}}\right)\;-\;\frac{1}{2},\quad n\in\mathbb{N},\]
can oscillate wildly, our numerical experiments in Subsection \ref{subsec:numerics} provide examples 
which illustrate the cases when $\{r^*_n(f)\}_{n \in \mathbb{N}}$ converges to $r(f)$ rapidly.
The convergence in these examples is much faster than that of the linear regression method.

\begin{rem}
The series in \eqref{eq:sum_test} with $\epsilon=0$ becomes
\[S(r)\;=\;\left\|\;\left\{\;2^{j\left(r+\frac{1}{2}\right)}\;\left\|\;\left\{\;\langle\; f,\;\psi_{j,k}\;\rangle\;\right\}_{k\in\mathbb{Z}}\;\right\|_{\ell^\infty}\;\right\}_{j\in\mathbb{N}}\;\right\|_{\ell^1}.\]
This norm appears in the characterization of $B^r_{\infty,1}(\mathbb{R})$ in Theorem \ref{thm:wave_char} and correspond to
$(a,b) \in \ell_{\infty,1}^r$, $r \in (0,\infty)$. Even if the case
 $p=\infty$ and $q=1$ is not covered by Theorem \ref{thm:main}, this observation is consistent with  $B^r_{p,q_1}(\mathbb{R})\subseteq B^r_{p,q_2}(\mathbb{R})$ for $q_1\leq q_2$.
\end{rem}

%%%%%%%%%%%%%%%%%%%%%%%%%%%%%%%%%%%%%%%%%%%
\subsection{Computation of frame coefficients: semi-regular case} \label{subsec:coefficients}
%%%%%%%%%%%%%%%%%%%%%%%%%%%%%%%%%%%%%%%%%%

Assumptions (I)-(V) do not require the semi-regularity of the mesh and all the above results hold even in the irregular case. To use the presented results in practice, however, we need  an efficient method for computing the frame coefficients 
$$
 \{a_k=\langle f,\phi_k \rangle \}_{k\in\mathbb{N}} \quad \hbox{and} \quad \{b_{j,k}=\langle f,\psi_{j,k} \rangle\}_{j \in \mathbb{N}, k\in\mathbb{Z}}. 
$$ 
If the functions $\phi_k$ or $\psi_{j,k}$ are defined implicitly as limits of irregular subdivision, suitable quadrature rules are not 
available in the literature. In what follows, we focus on semi-regular refinable functions, which are e.g. \cite{MR1687779,MR1356994} 
generated by subdivision  schemes on meshes of the type \eqref{eq:mesh}. Such
convergent iterative processes define compactly supported, uniformly continuous basic limit functions $\{\zeta_k(x)\; : \; k\in\mathbb{Z}\}$
with the properties
\begin{itemize}
\item[(i)] there exists a bi-infinite matrix $\mathbf{Z}$ such that the bi-infinite column vector $[\zeta_k(x)\; :\; k\in\mathbb{Z}]$
satisfies
\begin{equation} \label{eq:ref_eq}
[\zeta_k(x)\; :\; k\in\mathbb{Z}]\;=\; (\mathbf{Z}^j)^T \;[\zeta_k(2^jx)\; :\; k\in\mathbb{Z}],\quad x\in\mathbb{R},\quad j\in\mathbb{N};
\end{equation}
\item[(ii)] $1$ is the unique largest eigenvalue of $\mathbf{Z}$ in absolute value with the corresponding eigenvector  $\mathbf{1}$
all of whose entries are equal to $1$ 
\begin{equation} \label{eq:POU}
\mathbf{1}\;=\;\mathbf{Z}\;\mathbf{1}\qquad\textrm{and}\qquad[\zeta_k(2^jx)\; :\; k\in\mathbb{Z}]^T\;\mathbf{1}\;\equiv\;1;
\end{equation}
\item[(iii)] there exist $k_\ell(\mathbf{Z}),\;k_r(\mathbf{Z})\in\mathbb{Z}$, $k_\ell(\mathbf{Z})\leq k_r(\mathbf{Z})$ and bi-infinite vectors $\mathbf{z}_\ell,\;\mathbf{z}_r$ such that
\[\zeta_k(x)\;=\;\left\{\begin{array}{cl}
\zeta_{k_\ell(\mathbf{Z})}(\;x\;+\;h_\ell\;(k_\ell(\mathbf{Z})-k)\;) & \textrm{ for } k \leq k_\ell(\mathbf{Z}) \\ \\
\zeta_{k_r(\mathbf{Z})}(\;x\;+\;h_r\;(k_r(\mathbf{Z})-k)\;) & \textrm{ for } k \geq k_r(\mathbf{Z})
\end{array}\right.,\quad x\in\mathbb{R},\]
and
\[\mathbf{Z}(i,k)\;=\;\left\{\begin{array}{cl}
\mathbf{z}_\ell(\;i\;+\;2\;(k_\ell(\mathbf{Z})-k)\;) & \textrm{ for } k \leq k_\ell(\mathbf{Z}) \\ \\
\mathbf{z}_r(\;i\;+\;2\;(k_r(\mathbf{Z})-k)\;) & \textrm{ for } k \geq k_r(\mathbf{Z})
\end{array}\right.,\quad i\in\mathbb{Z}.\]
\end{itemize}
A wavelet tight frame $\mathcal{F}$ is based on another semi-regular subdivision scheme with limit functions  
$\left\{\varphi_k\; : \; k\in\mathbb{Z}\right\}$ and bi-infinite subdivision matrix $\mathbf{W}$ such that 
the assumptions (i)-(iii) are satisfied.  The framelets $\Psi_j$ that belong to $\mathcal{F}$ are constructed from the re-normalized
bi-infinite column vector 
\begin{equation} \label{eq:renorm}
\Phi\;=\;\mathbf{D}^{-1/2}\;[\varphi_k\; : \; k\in\mathbb{Z}]\quad\textrm{with} \quad \mathbf{D}\;=\;\diag\left(\mathbf{d}\right),\quad\mathbf{d}(k)\;=\;\int_\mathbb{R}\;\varphi_k(x)\;dx,\;k\in\mathbb{Z}.
\end{equation} 
via the the bi-infinite matrix $\mathbf{Q}$ through the relation
\begin{equation} \label{eq:WTF}
\Psi_j\;=\;2^{j/2}\;\mathbf{Q}^T\;\Phi(2^j\cdot)\quad\hbox{i.e.}\quad\Psi_j\;=\;2^{\frac{j-1}{2}}\;\Psi_1(2^{\frac{j-1}{2}}\cdot),\quad j\in\mathbb{N},
\end{equation}
see \cite{MR2110512} for more details. Now we are ready to present an algorithm for determining the frame 
coefficients of the basic limit functions in $[\zeta_k\; : \;k\in\mathbb{Z}]$.

\begin{prop} \label{prop:coeff_comp}
For every $j\in\mathbb{N}$, $i,\;k\in\mathbb{Z}$, we have $<\zeta_i,\psi_{j,k}>\;=\;\mathbf{C}_j(i,k)$, where
\[\mathbf{C}_j\;=\;2^{-j/2}\;(\mathbf{Z}^j)^T\;\mathbf{G}\;\mathbf{D}^{-1/2}\;\mathbf{Q},\]
 with the cross-Gramian $\mathbf{G} = \left(\mathbf{G}(i,k) \right)\;=\;\left(
 \langle\zeta_i,\varphi_k \rangle\right)$.
\end{prop}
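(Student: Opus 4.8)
The plan is to expand $\langle \zeta_i, \psi_{j,k}\rangle$ by first rewriting $\psi_{j,k}$ in terms of the renormalized refinable vector $\Phi$ via \eqref{eq:WTF}, and then rewriting $\Phi$ in terms of the subdivision limit functions $\{\varphi_k\}$ via \eqref{eq:renorm}, thereby reducing everything to the cross-Gramian $\mathbf{G}(i,k)=\langle\zeta_i,\varphi_k\rangle$ together with the refinement structure encoded in $\mathbf{Z}$ and $\mathbf{W}$. Concretely, from \eqref{eq:WTF} we have $\Psi_j = 2^{j/2}\,\mathbf{Q}^T\,\Phi(2^j\cdot)$, so
\[
\langle \zeta_i, \psi_{j,k}\rangle \;=\; 2^{j/2}\,\sum_{m\in\mathbb{Z}}\mathbf{Q}(m,k)\,\langle \zeta_i, \phi_m(2^j\cdot)\rangle
\;=\; 2^{j/2}\,\sum_{m,n}\mathbf{Q}(m,k)\,\mathbf{D}^{-1/2}(m,n)\,\langle \zeta_i, \varphi_n(2^j\cdot)\rangle,
\]
where the interchange of the (locally finite, by assumption (III) / property (iii)) sums with the integral is justified by compact support and uniform boundedness. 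It remains to identify $\langle \zeta_i, \varphi_n(2^j\cdot)\rangle$.

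The key step is to handle the dyadic dilation $\varphi_n(2^j\cdot)$ using the refinement relation \eqref{eq:ref_eq} for the analyzed functions: since $[\zeta_i(x):i\in\mathbb{Z}] = (\mathbf{Z}^j)^T[\zeta_i(2^j x):i\in\mathbb{Z}]$, we substitute $x\mapsto 2^{-j}x$ and integrate against $\varphi_n$ to obtain, for each fixed $i$,
\[
\langle \zeta_i, \varphi_n(2^j\cdot)\rangle \;=\; 2^{-j}\int_{\mathbb{R}}\zeta_i(2^{-j}y)\,\varphi_n(y)\,dy
\;=\; 2^{-j}\sum_{p\in\mathbb{Z}}\mathbf{Z}^j(p,i)\,\langle \zeta_p, \varphi_n\rangle
\;=\; 2^{-j}\big((\mathbf{Z}^j)^T\mathbf{G}\big)(i,n).
\]
Assembling the two displays and using $\mathbf{D}^{-1/2}$ symmetric (it is diagonal), one reads off
\[
\langle \zeta_i, \psi_{j,k}\rangle \;=\; 2^{j/2}\cdot 2^{-j}\,\Big((\mathbf{Z}^j)^T\,\mathbf{G}\,\mathbf{D}^{-1/2}\,\mathbf{Q}\Big)(i,k)
\;=\; 2^{-j/2}\,\big((\mathbf{Z}^j)^T\,\mathbf{G}\,\mathbf{D}^{-1/2}\,\mathbf{Q}\big)(i,k)\;=\;\mathbf{C}_j(i,k),
\]
which is exactly the claimed formula.

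The only genuine obstacle is making the bi-infinite matrix manipulations rigorous: one must check that all the sums appearing are locally finite (so that Fubini and the reordering of summation and integration are legitimate), and that the matrix products $(\mathbf{Z}^j)^T\mathbf{G}\,\mathbf{D}^{-1/2}\mathbf{Q}$ are well defined entrywise. This follows from the compact support of $\zeta_k$, $\varphi_k$ and of the columns of $\mathbf{Q}$, the slantedness/locality properties (iii) of $\mathbf{Z}$ and $\mathbf{W}$, and assumption (II)–(III): each row of $\mathbf{Z}^j$ has finitely many nonzero entries with support growing only linearly in $2^j$, each column of $\mathbf{G}$ has finite support because $\supp(\zeta_i)\cap\supp(\varphi_k)\neq\emptyset$ for only finitely many $i$, and $\mathbf{D}^{-1/2}$ is a bounded diagonal matrix since $\mathbf{d}(k)$ is bounded away from $0$ and $\infty$ by the quasi-uniformity of the semi-regular scheme and \eqref{eq:POU}. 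Everything else is the bookkeeping carried out above.
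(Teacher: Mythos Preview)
Your proof is correct and follows essentially the same approach as the paper: expand $\psi_{j,k}$ via \eqref{eq:WTF} and \eqref{eq:renorm}, change variables $y=2^jx$, and absorb the dilation into the $\zeta_i$ using the refinement relation \eqref{eq:ref_eq}, reducing everything to the cross-Gramian $\mathbf{G}$. The only difference is cosmetic: the paper carries out the computation in vector/matrix form on the whole array $\int_{\mathbb{R}}[\zeta_i(x)]\Psi_j(x)^T\,dx$ at once, whereas you work entrywise; your added paragraph on local finiteness is a welcome justification that the paper leaves implicit.
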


\begin{proof}
Applying \eqref{eq:WTF}, the substitution $y=2^j x$, \eqref{eq:ref_eq} and \eqref{eq:renorm}, we get
\[\begin{array}{rcl}
\int_{\mathbb{R}}\;[\zeta_i(x)\; :\; i\in\mathbb{Z}]\;\Psi_j(x)^T\;dx&=&2^{j/2}\;\int_{\mathbb{R}}\;[\zeta_i(x)\; : \; k\in\mathbb{Z}]\;\Phi(2^j x)^T\;dx\;\mathbf{Q} \\ \\
&=& 2^{-j/2}\;\int_{\mathbb{R}}\;[\zeta_i(2^{-j}y)\; : \; i\in\mathbb{Z}]\;\Phi(y)^T\;dy\;\mathbf{Q} \\ \\
&=& 2^{-j/2}\;(\mathbf{Z}^{j})^T\;\int_{\mathbb{R}}\;[\zeta_i(y) \; : \; i\in\mathbb{Z}]\;\Phi(y)^T\;dy\;\mathbf{Q} \\ \\
&=& 2^{-j/2}\;(\mathbf{Z}^{j})^T\;\int_{\mathbb{R}}\;[\zeta_i(y)\; : \; i\in\mathbb{Z}]\;[\varphi_k(y)\; :\; k\in\mathbb{Z}]^T\;dy\;\mathbf{D}^{-1/2}\;\mathbf{Q} \\ \\
&=& 2^{-j/2}\;(\mathbf{Z}^{j})^T\;\mathbf{G}\;\mathbf{D}^{-1/2}\;\mathbf{Q}.
\end{array}\]
\end{proof}
To determine the cross-Gramian $\mathbf{G}$ in Proposition \ref{prop:coeff_comp} we use a strategy similar to the one in \cite{first_paper} which
yields the linear system 
\[
\mathbf{G}\;=\;\frac{1}{2}\;\mathbf{Z}^T\;\mathbf{G}\;\mathbf{P}.
\]
See \cite{first_paper} for more details.

%%%%%%%%%%%%%%%%%%%%%%%%%%%%%%%%%%%%%%%
\subsection{Numerical estimates } \label{subsec:numerics}
%%%%%%%%%%%%%%%%%%%%%%%%%%%%%%%%%%%%%%

For simplicity of presentation, in this subsection we choose $h_\ell=1$ and $h_r=2$ in \eqref{eq:mesh}. The tight wavelet frame families $\cal{F}$ used for our numerical experiments  are constructed in \cite{first_paper} from the Dubuc-Deslauriers $2L$-point subdivision family, $L\in\mathbb{N}$. These semi-regular subdivision schemes
satisfy (i)-(iii) from Subsection 4.2. 
The frame families $\cal{F}$ fulfill assumptions (I)-(V) and are constructed as in \eqref{eq:WTF}. Moreover,  there exist $k_\ell(\mathbf{Q}),\;k_r(\mathbf{Q})\in\mathbb{Z}$, $k_\ell(\mathbf{Q})<k_r(\mathbf{Q})$ such that
\[\psi_{1,k}(x)\;=\;\left\{\begin{array}{cl}
\psi_{1,k_\ell(\mathbf{Q})}(\;x\;+\;h_\ell\;(k_\ell(\mathbf{Q})-k)/2\;) & \textrm{ for } k\leq k_\ell(\mathbf{Q}),\; k \equiv k_\ell(\mathbf{Q}) \mod 2 \\ \\
\psi_{1,k_\ell(\mathbf{Q})-1}(\;x\;+\;h_\ell\;(k_\ell(\mathbf{Q})-1-k)/2\;) & \textrm{ for } k< k_\ell(\mathbf{Q}),\; k \not\equiv k_\ell(\mathbf{Q}) \mod 2 \\ \\
\psi_{1,k_r(\mathbf{Q})}(\;x\;+\;h_r\;(k_r(\mathbf{Q})-k)/2\;) & \textrm{ for } k\geq k_r(\mathbf{Q}),\; k \equiv k_r(\mathbf{Q}) \mod 2 \\ \\
\psi_{1,k_r(\mathbf{Q})+1}(\;x\;+\;h_r\;(k_r(\mathbf{Q})+1-k)/2\;) & \textrm{ for } k>k_r(\mathbf{Q}),\; k \not\equiv k_r(\mathbf{Q}) \mod 2 \\ \\
\end{array}\right.,\quad x\in\mathbb{R}.\]
In particular, we only have $3+k_r(\mathbf{Q})-k_\ell(\mathbf{Q})$ different framelets, i.e. different refinement equations. This fact together with (iii), \eqref{eq:renorm} and \eqref{eq:WTF} guarantee that assumptions (II)-(V) in Section 1  are fulfilled.

The optimal H\"older-Zygmund exponents of the refinable functions in Subsections \ref{subsec:qB} and \ref{subsec:DD} are known. These examples are
used as a benchmark to test our theoretical results. In Subsection \ref{subsec:RBF}, we present a construction of new families of
semi-regular radial basis (RBF) refinable functions (generated by interpolatory RBF subdivision schemes) and determine their optimal H\"older-Zygmund exponents.  
The numerical computation were been done in MATLAB Version 2018a on a Windows 10 laptop.

%%%%%%%%%%%%%%%%%%%%%%%%%%%%%%%%%%%%%%%%%%%%%%%%%%%%%%%%%%%%%%%%%
%QUADRATIC WITH 6 POINTS h=[1 2]
%%%%%%%%%%%%%%%%%%%%%%%%%%%%%%%%%%%%%%%%%%%%%%%%%%%%%%%%%%%%%%%%%
\subsubsection{Quadratic B-spline scheme} \label{subsec:qB}
%%%%%%%%%%%%%%%%%%%%%%%%%%%%%%%%%%%%%%%%%
The quadratic B-spline scheme generates basic limit functions which are piecewise polynomials of degree two, supported between four consecutive knots of $\mathbf{t}$ in \eqref{eq:mesh}. The corresponding subdivision matrix $\mathbf{Z}$ is constructed to satisfy these conditions. There are $k_r(\mathbf{Z})-k_\ell(\mathbf{Z})-1=2$ irregular functions whose supports contain the point $\mathbf{t}(0)=0$ and it is well known that these functions are $\mathcal{C}^{2-\epsilon}(\mathbb{R})$, $\epsilon>0$, thus their optimal exponent is equal to $2$. 

\begin{figure}[h]
\begin{minipage}[c]{0.5\textwidth}
\[
\mathbf{Z}(-6:3,-3:0)\;=\;
\left[\begin{array}{cccc} 
1/4 \\ 
3/4 \\ 
3/4 & 1/4 & \\  
1/4 & 3/4 & \\  
& 5/6 & 1/6\\  
&1/3 & 2/3\\  
&& 3/4& 1/4\\  
&&1/4 & 3/4 \\ 
&&& 3/4 \\ 
&&&1/4
\end{array}\right]
\]
\end{minipage} 
\begin{minipage}[c]{0.5\textwidth}
\centering
\includegraphics[width=0.75\textwidth,height=0.21\textheight]{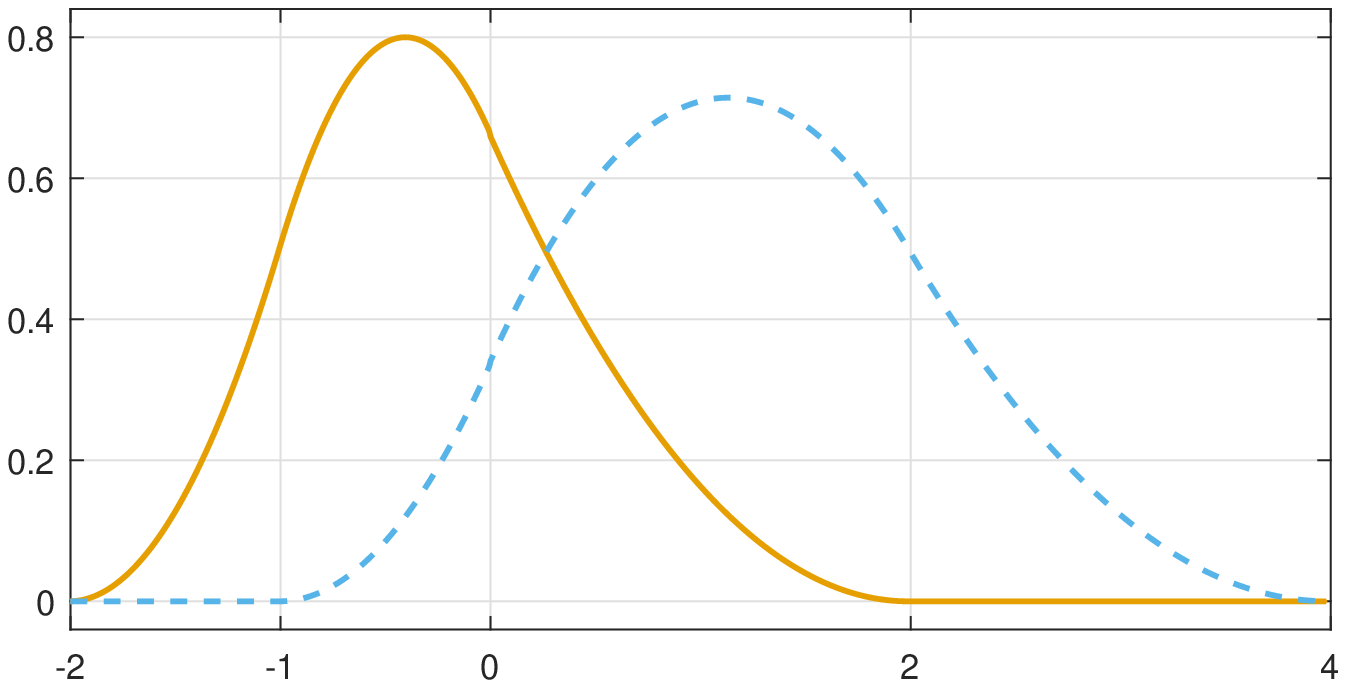}
\end{minipage}
$ $\\

 \begin{minipage}[c]{0.5\textwidth}
\begin{center}\begin{tabular}{|c|c|c|}
\hline
$n$ & $r_n(\zeta_{-2})$ & $r_n(\zeta_{-1})$ \\
\hline
   1&-0.9004    &0.8333\\
   2&-0.0362    &0.5235\\
   3 &0.6611    &0.9355\\
    4&1.0683    &1.2308\\
    5&1.3178    &1.4250\\
\hline
\end{tabular}\end{center}
\begin{center}\includegraphics[height=0.2\textheight,width=0.75\textwidth]{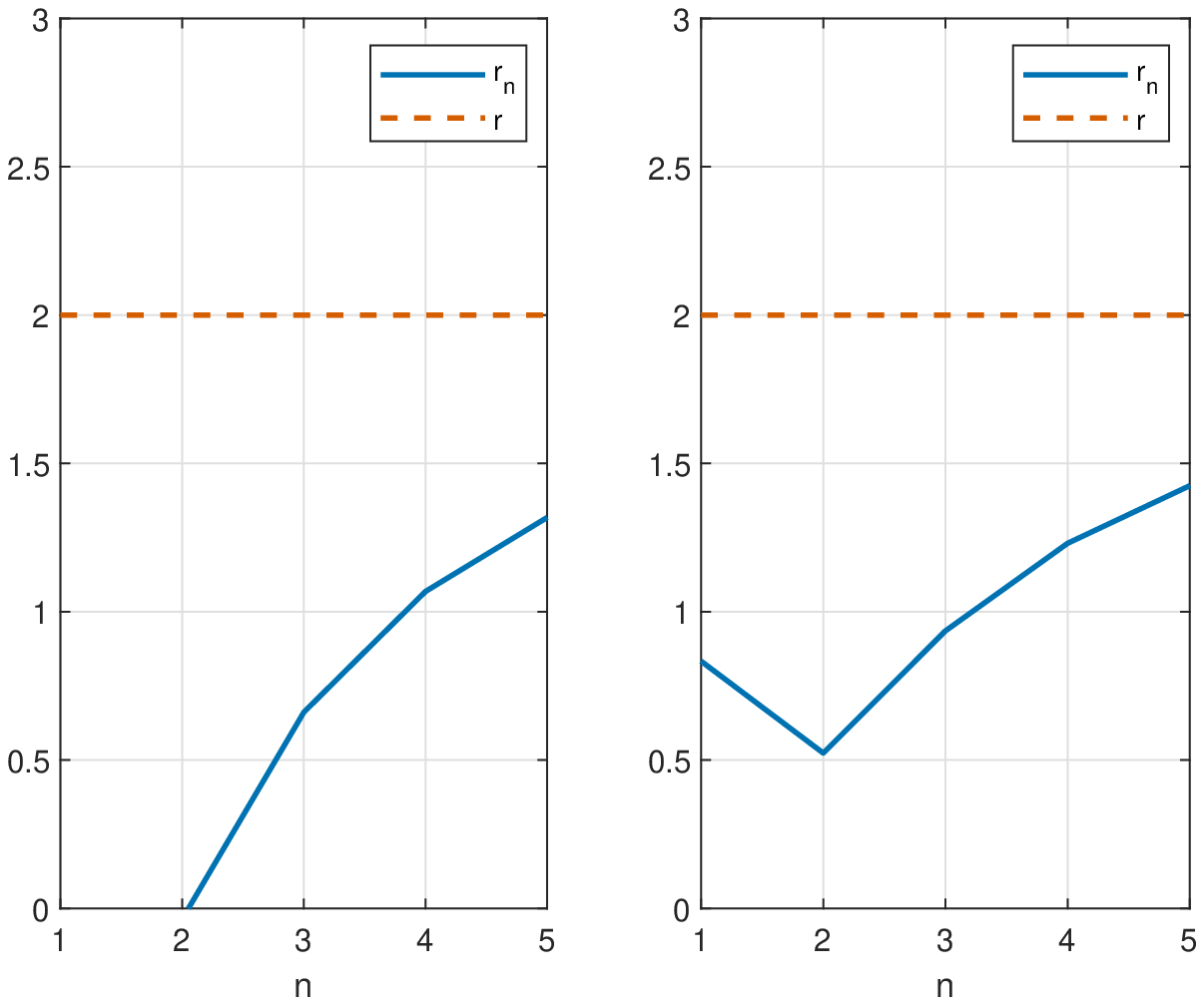} \end{center}
\end{minipage}
\begin{minipage}[c]{0.5\textwidth}
\begin{center}\begin{tabular}{|c|c|c|}
\hline
$n$ & $r^*_n(\zeta_{k_\ell(\mathbf{Z})+1})$ & $r^*_n(\zeta_{k_\ell(\mathbf{Z})+2})$ \\
\hline
   1&-0.9004   &0.8333\\
   2 &0.8280    &0.2136\\
   3 &2.0000    &2.0001\\
   4 &2.0000    &2.0000\\
   5 &2.0000    &2.0000\\
\hline
\end{tabular}\end{center}
\begin{center}\includegraphics[height=0.2\textheight,width=0.75\textwidth]{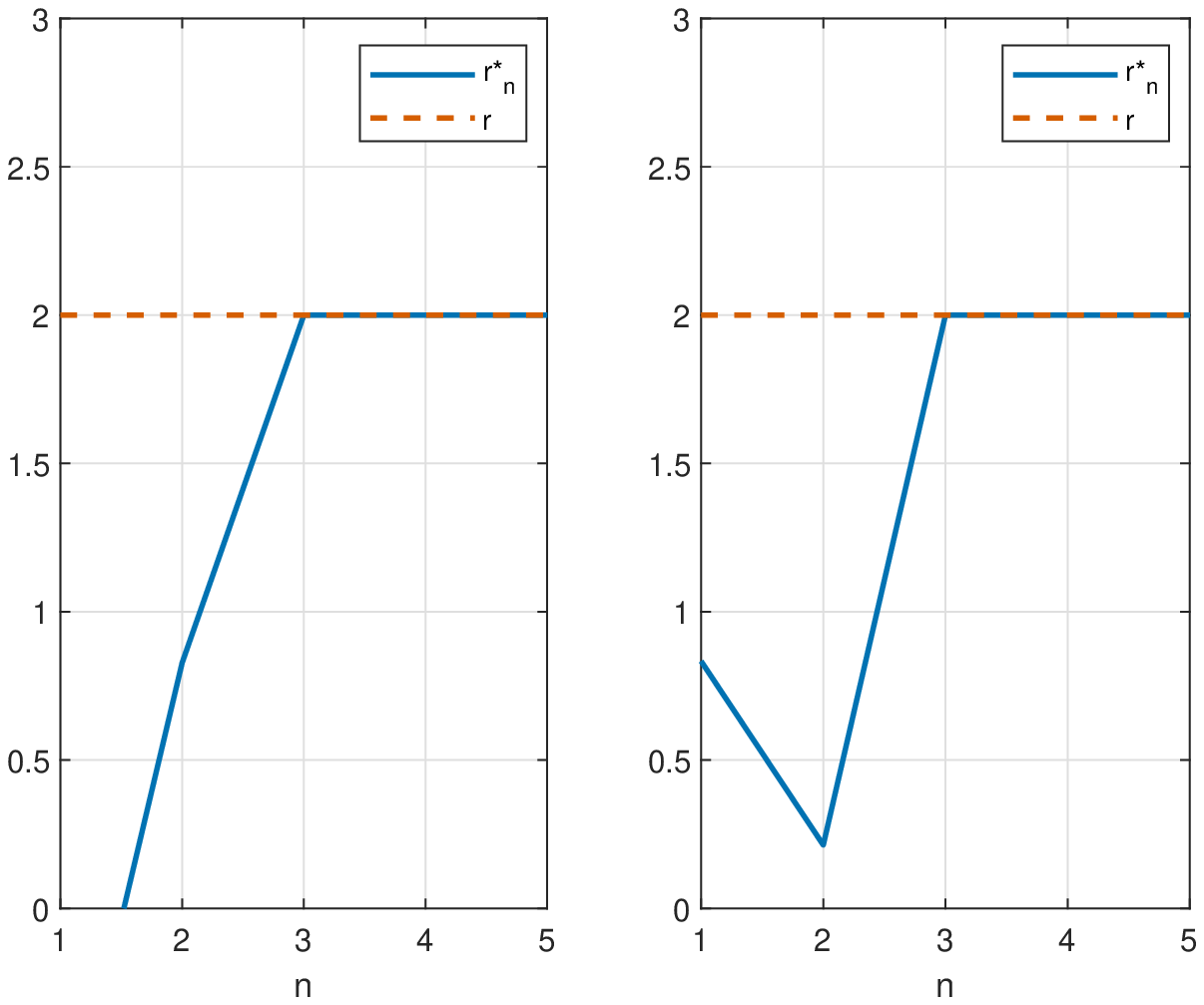}\end{center}
\end{minipage}
\caption{Semi-regular quadratic B-spline functions on the mesh $\bf{t}$ with $h_\ell=1$ and $h_r=2$ analyzed with the semi-regular 
Dubuc-Deslauriers $6$-point tight wavelet frame. Top row: part of the subdivision matrix $\mathbf{Z}$ that corresponds to the non-shift-invariant refinable functions around $0$ and
the graphs of these  functions $\zeta_{-2}$ and $\zeta_{-1}$. Middle row: estimates of the optimal H\"older-Zygmund exponents 
of $\zeta_{-2}$ and $\zeta_{-1}$ via linear regression (on the left) via the method in Proposition \ref{prop:my_test} (on the right).
Bottom row: graphs of the estimates of the H\"older-Zygmund exponents.}
\label{fig:B2_DD6}
\end{figure}

On Figure \ref{fig:B2_DD6} we give the estimates of the optimal H\"older-Zygmund exponents by both the linear regression method and by the
method in Proposition \ref{prop:my_test}.  For the analysis we used the semi-regular tight wavelet frame constructed from the limits of 
the semi-regular Dubuc-Deslauriers $6$-point subdivision scheme. This toy example already illustrates that the method proposed in Proposition \ref{prop:my_test} reaches the optimal exponent in few steps, while the linear regression method converges much slower.

%%%%%%%%%%%%%%%%%%%%%%%%%%%%%%%%%%%%%%%%%%%%%%%%%%%%%%%%%%%%%%%%%
%4 POINTS WITH 6 POINTS h=[1 2]
%%%%%%%%%%%%%%%%%%%%%%%%%%%%%%%%%%%%%%%%%%%%%%%%%%%%%%%%%%%%%%%%%
\subsubsection{Dubuc-Deslauriers $4$-point scheme} \label{subsec:DD}
%%%%%%%%%%%%%%%%%%%%%%%%%%%%%%%%%%%%%%%%%%%%%%%%%%%%%
In \cite{MR1356994}, the subdivision matrix $\mathbf{Z}$ of the semi-regular  Dubuc-Deslauriers $4$-point scheme is obtained by requiring that $\mathbf{Z}$ is $2$-slanted, has at most $7$ non-zero entries in each column and $\mathbf{Z}(2i,k)=\delta_{i,k}$, $i,\;k\in\mathbb{Z}$, $\mathbf{Z}\;\mathbf{t}^\alpha\;=\;(\mathbf{t}/2)^\alpha$ for $\alpha\in\{0,\dots,3\}$ with $\mathbf{t}$ in \eqref{eq:mesh}. In this case, there are $5$ irregular (non-shift-invariant) refinable functions depicted on Figure \ref{fig:DD4_DD6}. Due to results in \cite{MR1687779}, it is well known that the optimal exponent of 
all these irregular functions is equal to $2$. Again, the method in Proposition \ref{prop:my_test} remarkably outperforms the linear regression method.

\begin{figure}[h]

\begin{minipage}[c]{0.5\textwidth}
{\scriptsize$\mathbf{Z}(-9:9,-3:3)\;=\;$
\[
\begin{bmatrix}
-1/16 \\
0\\ 
9/16&-1/16 \\
  1 &  0 \\
 9/16 &9/16 & -1/16 \\
  0 &  1&     0 \\
 -1/16 &9/16&  9/16& -1/16\\
   &  0&     1&     0 \\
 &-5/64&   5/8& 15/32& -1/64 \\
     &&     0&     1&     0 \\
     &&  -1/5&   3/4&   1/2& -1/20 \\
     &&&     0&     1&     0 \\
     &&& -1/16&  9/16&  9/16 & -1/16\\
     &&&&     0&     1& 0\\
     &&&& -1/16&  9/16 & 9/16\\
     &&&&&     0 & 1\\
     &&&&& -1/16 & 9/16\\
     &&&&&& 0 \\
     &&&&&& -1/16
\end{bmatrix}\]}
\end{minipage}
\begin{minipage}[c]{0.5\textwidth}
\centering
\includegraphics[width=0.9\textwidth]{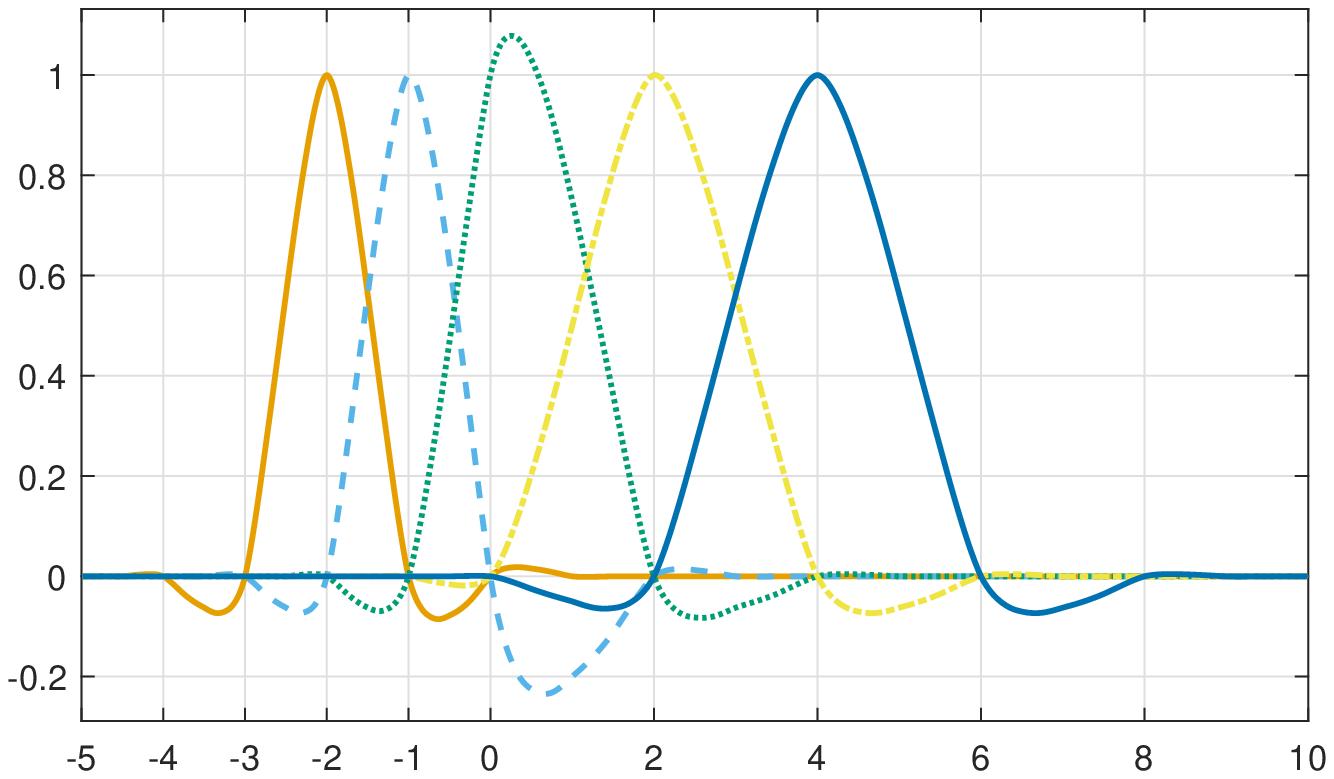}
\end{minipage}

\begin{minipage}[c]{0.5\textwidth}
{\footnotesize\begin{center}\begin{tabular}{|c|c|c|c|c|c|}
\hline
$n$ & $r_n(\zeta_{-2})$ & $r_n(\zeta_{-1})$ & $r_n(\zeta_{0})$ & $r_n(\zeta_{1})$ & $r_n(\zeta_{2})$\\
\hline
   1&-0.0589   &-0.6276   &-0.4561   &-0.6674    &1.4706\\
    2&1.5123    &0.4586    &0.6302    &0.8716    &1.8356\\
    3&1.8333    &1.0375    &1.1799    &1.5818    &2.2240\\
    4&1.9052    &1.3336    &1.4487    &1.8532    &2.3523\\
    5&1.9360    &1.5117    &1.6052    &1.9587    &2.3596\\
    6&1.9516    &1.6266    &1.7032    &2.0022    &2.3239\\
    7&1.9615    &1.7052    &1.7688    &2.0209    &2.2820\\
    8&1.9683    &1.7614    &1.8148    &2.0286    &2.2436\\
    9&1.9734    &1.8030    &1.8484    &2.0310    &2.2108\\
    10&1.9774    &1.8346    &1.8736    &2.0310    &2.1833\\
    11&1.9805    &1.8592    &1.8929    &2.0298    &2.1604\\
    12&1.9830    &1.8786    &1.9082    &2.0281    &2.1413\\
    13&1.9851    &1.8944    &1.9204    &2.0262    &2.1252\\
    14&1.9868    &1.9072    &1.9303    &2.0244    &2.1116\\
    15&1.9882    &1.9179    &1.9385    &2.0226    &2.1001\\
    16&1.9895    &1.9268    &1.9453    &2.0209    &2.0902\\
\hline
\end{tabular}\end{center}}
\end{minipage}
\begin{minipage}[c]{0.5\textwidth}
{\footnotesize\begin{center}\begin{tabular}{|c|c|c|c|c|c|}
\hline
$n$ & $r^*_n(\zeta_{-2})$ & $r^*_n(\zeta_{-1})$ & $r^*_n(\zeta_{0})$ & $r^*_n(\zeta_{1})$ & $r^*_n(\zeta_{2})$\\
\hline
   1&-0.0589   &-0.6276   &-0.4561   &-0.6674    &1.4706\\
    2&3.0835    &1.5448    &1.7165    &2.4105    &2.2007\\
    3&2.0585    &2.0261    &2.1006    &2.7262    &3.0086\\
    4&1.8721    &1.9392    &1.9742    &2.2284    &2.4769\\
    5&1.9764    &1.9883    &2.0064    &2.0489    &2.1474\\
    6&1.9792    &1.9897    &1.9984    &2.0131    &2.0013\\
    7&1.9920    &1.9960    &2.0004    &2.0096    &1.9997\\
    8&1.9954    &1.9977    &1.9999    &2.0041    &2.0001\\
    9&1.9979    &1.9989    &2.0000    &2.0022    &2.0000\\
    10&1.9989    &1.9995    &2.0000    &2.0011    &2.0000\\
    11&1.9995    &1.9997    &2.0000    &2.0005    &2.0000\\
    12&1.9997    &1.9999    &2.0000    &2.0003    &2.0000\\
    13&1.9999    &1.9999    &2.0000    &2.0001    &2.0000\\
    14&1.9999    &2.0000    &2.0000    &2.0001    &2.0000\\
    15&2.0000    &2.0000    &2.0000    &2.0000    &2.0000\\
    16&2.0000    &2.0000    &2.0000    &2.0000    &2.0000\\
\hline
\end{tabular}\end{center}}
\end{minipage}

$ $\\

\begin{minipage}[c]{0.5\textwidth}
\begin{center}\includegraphics[width=0.9\textwidth]{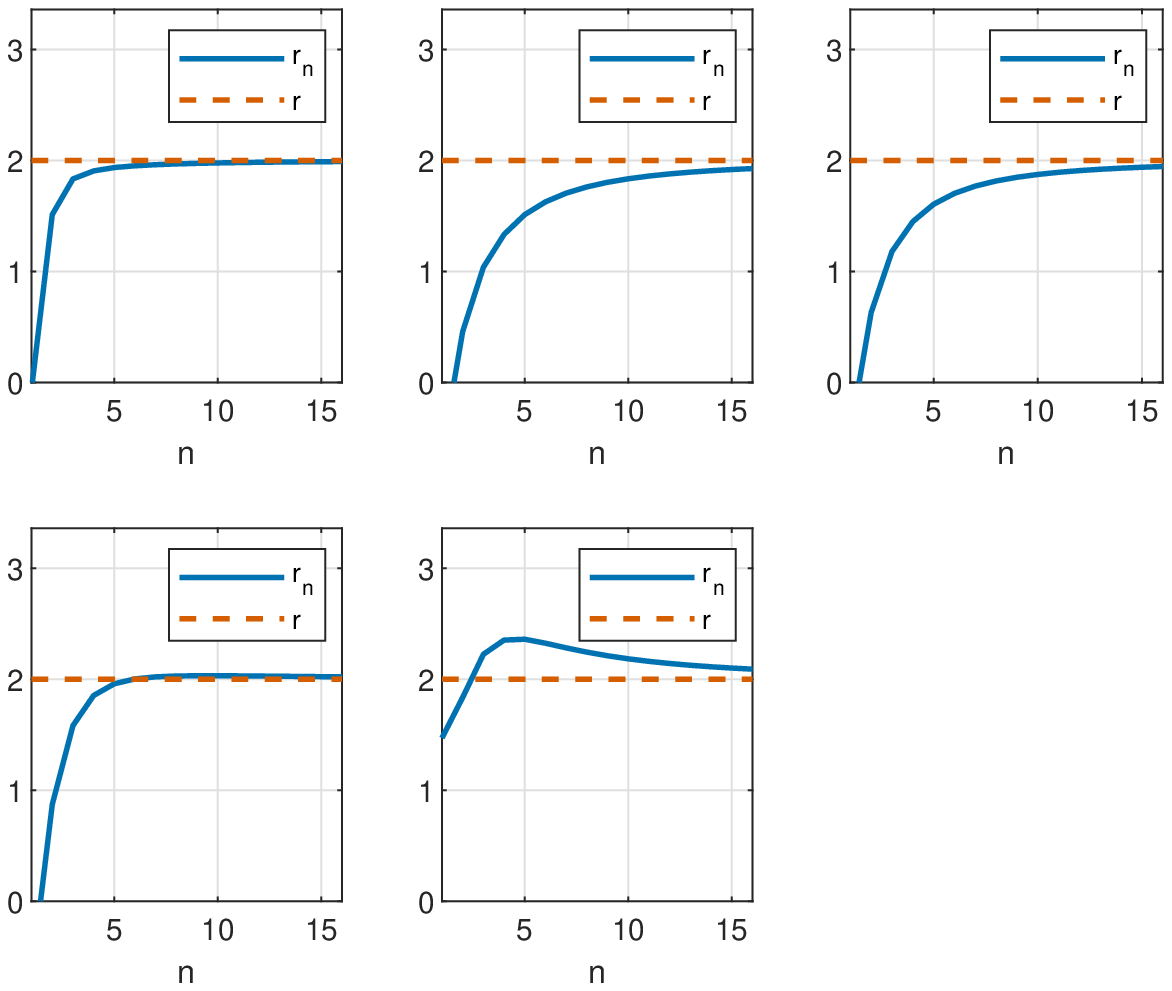} \end{center}
\end{minipage}
\begin{minipage}[c]{0.5\textwidth}
\begin{center}\includegraphics[width=0.9\textwidth]{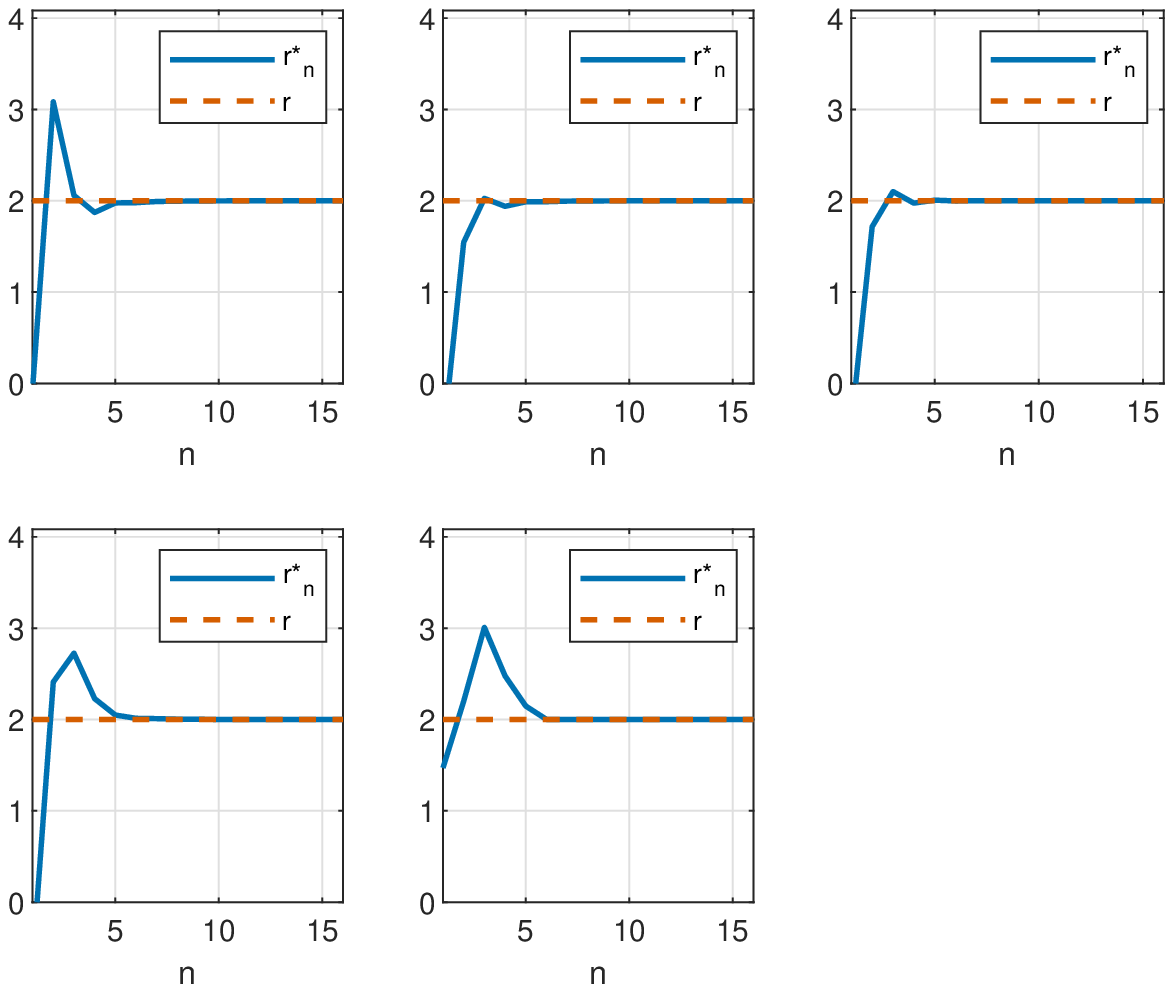}\end{center}
 \end{minipage}

\caption{Semi-regular Dubuc-Deslauriers $4$-point limit functions on the mesh $\bf{t}$ with $h_\ell=1$ and $h_r=2$ analyzed via the semi-regular Dubuc-Deslauriers $6$-point tight wavelet frame.  Top row: part of the subdivision matrix $\mathbf{Z}$ that corresponds to the non-shift-invariant refinable functions around $0$ and the graphs of these functions $\zeta_{-2},\dots,\zeta_{2}$. Middle row: estimates of the optimal H\"older-Zygmund exponents 
of $\zeta_{-2},\dots,\zeta_{2}$ via linear regression (on the left) via the method in Proposition \ref{prop:my_test} (on the right).
Bottom row: graphs of the estimates of the H\"older-Zygmund exponents.}
\label{fig:DD4_DD6}
\end{figure}

%%%%%%%%%%%%%%%%%%%%%%%%%%%%%%%%%%%%%%%%%%%%%%%%%%%%%%%%%%%%%%%%%
%RBFs h=[1 2]
%%%%%%%%%%%%%%%%%%%%%%%%%%%%%%%%%%%%%%%%%%%%%%%%%%%%%%%%%%%%%%%%%
\subsubsection{Radial basis functions based interpolatory schemes} \label{subsec:RBF} 
%%%%%%%%%%%%%%%%%%%%%%%%%%%%%%%%%%%%%%%%%%%%%%%%%%%%%%%%%%%%%%%%%%

Using  techniques similar to \cite{first_paper, Warren:2001:SMG:580358}, we extend the subdivision schemes \cite{MR2231695, MR2578850} based on radial basis functions to the semi-regular setting. Let $L\in\mathbb{N}$. We require that the subdivision matrix $\mathbf{Z}$ satisfies
 $\mathbf{Z}(2i,k)=\delta_{i,k}$ for $i, k\in\mathbb{Z}$. To determine the other entries of the $2$-slanted matrix $\mathbf{Z}$ whose columns 
are centered at $\mathbf{Z}(2k,k)$, $k\in\mathbb{Z}$ and have support length at most $4L-1$, we proceed as follows. We first choose a 
radial basis function $g(x)=g(|x|)$, $x \in \mathbb{R}$, which is conditionally positive definite of order $\eta\in\mathbb{N}$, i.e., for
every set of pairwise distinct points $\{x_i\}_{i=1}^{N}\subset\mathbb{R}$ and coefficients $\{c_i\}_{i=1}^{N}\subset\mathbb{R}$, $N\in\mathbb{N}$, there
exists a polynomial $\pi$ of degree at most $\eta-1$ such that
\[
\sum_{i=1}^{N}\;c_i\;\pi(x_i)\;=\;0
\]
and the function $g$ satisfies
\[
 \sum_{i=1}^{N}\;\sum_{k=1}^{N}\;c_i\;c_k\;g(x_i-x_k)\;\geq\;0.
\]
The next step is to choose the order $m\in\{ \eta,\dots,2L\}$ of polynomial reproduction and, for every set of $2L$ consecutive points $\mathbf{t}(k-L+1),\;\dots,\;\mathbf{t}(k+L)$, $k \in \mathbb{Z}$, of the mesh $\mathbf{t}$ in \eqref{eq:mesh}, solve the linear system of equations
\begin{equation} \label{eq:RBFs_system}
\begin{bmatrix}
\mathbf{A} & \mathbf{B} \\ \\
\mathbf{B}^T & \mathbf{0}
\end{bmatrix}\;\begin{bmatrix}
\mathbf{u} \\ \\
\mathbf{v}
\end{bmatrix}\;=\;\begin{bmatrix}
\mathbf{r} \\ \\
\mathbf{s}
\end{bmatrix}\end{equation}
with
$$ \{\;\mathbf{A}(i,j)\;=\;g(\mathbf{t}(k-L+i)-\mathbf{t}(k-L+j))\;\}_{i,j=1,\dots,2L}, \quad 
 \{\;\mathbf{B}(i,j)\;=\;\mathbf{t}(k-L+i)^{j-1}\;\}_{i=1,\dots,2L,\;j=1,\dots,m},
$$
$$
 \{\;\mathbf{r}(i)\;=\;g(x_k-\mathbf{t}(k-L+i))\;\}_{i=1,\dots,2L},\quad\{\;\mathbf{s}(j)\;=\;x_k^{j-1}\;\}_{j=1,\dots,m}
\quad  \hbox{and} $$
$$ 
 x_k \;=\; (\;\mathbf{t}(k)+\mathbf{t}(k+1)\;)/2.
$$
Lastly, the vector $\mathbf{u}$ contains the entries of the $2k+1$-th row of $\mathbf{Z}$ associated to the columns $k-L+1$ to $k+L$. 

\begin{rem} \label{rem:RBF}

$(i)$ Determining the rows of $\mathbf{Z}$ by solving the linear systems  \eqref{eq:RBFs_system} for $k \in \mathbb{Z}$ guarantees the polynomial reproduction of degree 
at most $m-1$. Indeed, the condition $\mathbf{B}^T\;\mathbf{u}\;=\;\mathbf{s}$ forces $\mathbf{Z}$ to map samples over $\mathbf{t}$ of a polynomial of degree at most $m-1$ onto sample over the finer knots $\mathbf{t}/2$ of the same polynomial. 

\noindent $(ii)$ If $m=2L$, the system of equations $\mathbf{B}^T\;\mathbf{u}\;=\;\mathbf{s}$ coincides with the one defining the Dubuc-Deslauriers $2L$-point scheme in \cite{first_paper}. In this case,
the system  $\mathbf{B}^T\;\mathbf{u}\;=\;\mathbf{s}$ has a unique solution, which makes the presence of $\mathbf{A}$, i.e. of the radial basis function $g$
obsolete.

\noindent $(iii)$ If $m<2L$, the structure of the irregular limit functions around $0$ reflect the transition (blending) between the two (one on the left and one on the right of $0$) subdivision schemes of
different regularity, see Example \ref{ex:Buhmann}. This depends on the properties of the chosen underlying radial basic function $g$. 
For example, the blending produces no visible effect if $g$ is homogeneous, i.e. 
$g(\lambda x )=|\lambda| g(x)$, $\lambda\in\mathbb{R}$. In this case, for $\lambda>0$, the linear system of equations
\begin{equation} \label{eq:RBFs_system2}
\begin{bmatrix}
\lambda\mathbf{I} & \mathbf{0} \\ \\
\mathbf{0} & \mathbf{L}
\end{bmatrix}\;\begin{bmatrix}
\mathbf{A} & \mathbf{B} \\ \\
\mathbf{B}^T & \mathbf{0}
\end{bmatrix}\;\begin{bmatrix}
\mathbf{I} & \mathbf{0} \\ \\
\mathbf{0} & \mathbf{L}/\lambda
\end{bmatrix}\;\begin{bmatrix}
\mathbf{I} & \mathbf{0} \\ \\
\mathbf{0} & \lambda\mathbf{L}^{-1}
\end{bmatrix}\;\begin{bmatrix}
\mathbf{u} \\ \\
\mathbf{v}
\end{bmatrix}\;=\;\begin{bmatrix}
\lambda\mathbf{I} & \mathbf{0} \\ \\
\mathbf{0} & \mathbf{L}
\end{bmatrix}\;\begin{bmatrix}
\mathbf{r} \\ \\
\mathbf{s}
\end{bmatrix}
\end{equation}
with the identity matrix $\mathbf{I}$  and $\mathbf{L}=\diag([\lambda^{j-1}]_{j=1,\dots,m})$ is equivalent to the system in \eqref{eq:RBFs_system} 
for the mesh $\lambda\mathbf{t}$. The structure of the linear system in \eqref{eq:RBFs_system2} implies that $\mathbf{u}$ is the same 
as the one determined by \eqref{eq:RBFs_system}. 

\noindent $(iv)$ The argument in $(iii)$ with $\mathbf{L}=\mathbf{I}$ shows that the subdivision matrix obtained this way does not depend on the normalization of the radial basis function $g$, i.e. all functions $\lambda g$, $\lambda>0$,  lead to the same subdivision scheme. 
\end{rem}

\begin{exmp} \label{ex:Buhmann} We consider the radial basis function introduced by M.~Buhmann in  \cite{MR1883626}
\begin{equation} \label{eq:BU}
	g(x)\;=\;\left\{\begin{array}{rl}
		12x^4\log|x|\;-\;21x^4\;+\;32|x|^3\;-\;12x^2\;+\;1,& \textrm{ if } |x|<1, \\ \\
		0, & \textrm{ otherwise},
	\end{array}\right.
\end{equation}
and choose $L=2$ and $m=1$. The resulting irregular functions $\zeta_{-2},\dots,\zeta_{2}$ are shown on Figure \ref{fig:GA_DD4}. The structure
of $\zeta_{0}$ illustrates the blending effect (described in Remark \ref{rem:RBF} part $(iii)$) of two different subdivision schemes meeting at $0$. 
Figure \ref{fig:GA_DD4} also presents the estimates of the optimal H\"older-Zygmund exponents of $\zeta_{-2},\dots,\zeta_{2}$.
These exponents are determined using the tight wavelet frame \cite{first_paper} based on the Dubuc-Deslauriers $4$-point subdivision scheme. 
We again observe the phenomenon that the method in Proposition \ref{prop:my_test} converges faster than the linear regression. 
\end{exmp}

\begin{figure}[h]

\begin{minipage}[c]{0.5\textwidth}
{\scriptsize$\mathbf{Z}(-9:9,-3:3)\;=\;$
%BUHMANN DOUBLE TABLE
\[\left[\begin{array}{ccccccc}
	0.1662 & & & & & \\ 
	0& & & & & \\
	0.3338& 0.1662 &  &  &  & &\\ 
	1&0 &  &  &  &  &\\ 
	0.3338& 0.3338 & 0.1662 &  &  &  &\\ 
	0&1 & 0 &  &  &  &\\ 
	0.1662& 0.3338 & 0.3338 & 0.1662 &  &  &\\ 
	&0 & 1 & 0 &  &  &\\ 
	&0.1662 & 0.3338 & 0.3338 &0.1662 &  &\\ 
	& & 0 & 1 & 0 &  &\\ 
	& & 0.2500 & 0.2500 & 0.2500 & 0.2500 &\\ 
	& &  & 0 & 1 & 0 &\\ 
	& &  & 0.2500 & 0.2500 & 0.2500 & 0.2500\\ 
	& &  &  & 0 & 1 & 0\\ 
	& &  &  & 0.2500 & 0.2500 & 0.2500\\ 
	& &  &  &  & 0 & 1\\ 
	& &  &  &  & 0.2500 & 0.2500\\
	& & & & & & 0\\
	& & & & & & 0.2500\\
\end{array}\right].\]}

\end{minipage}
\begin{minipage}[c]{0.5\textwidth}
\centering
\includegraphics[width=0.9\textwidth,height=0.2\textheight]{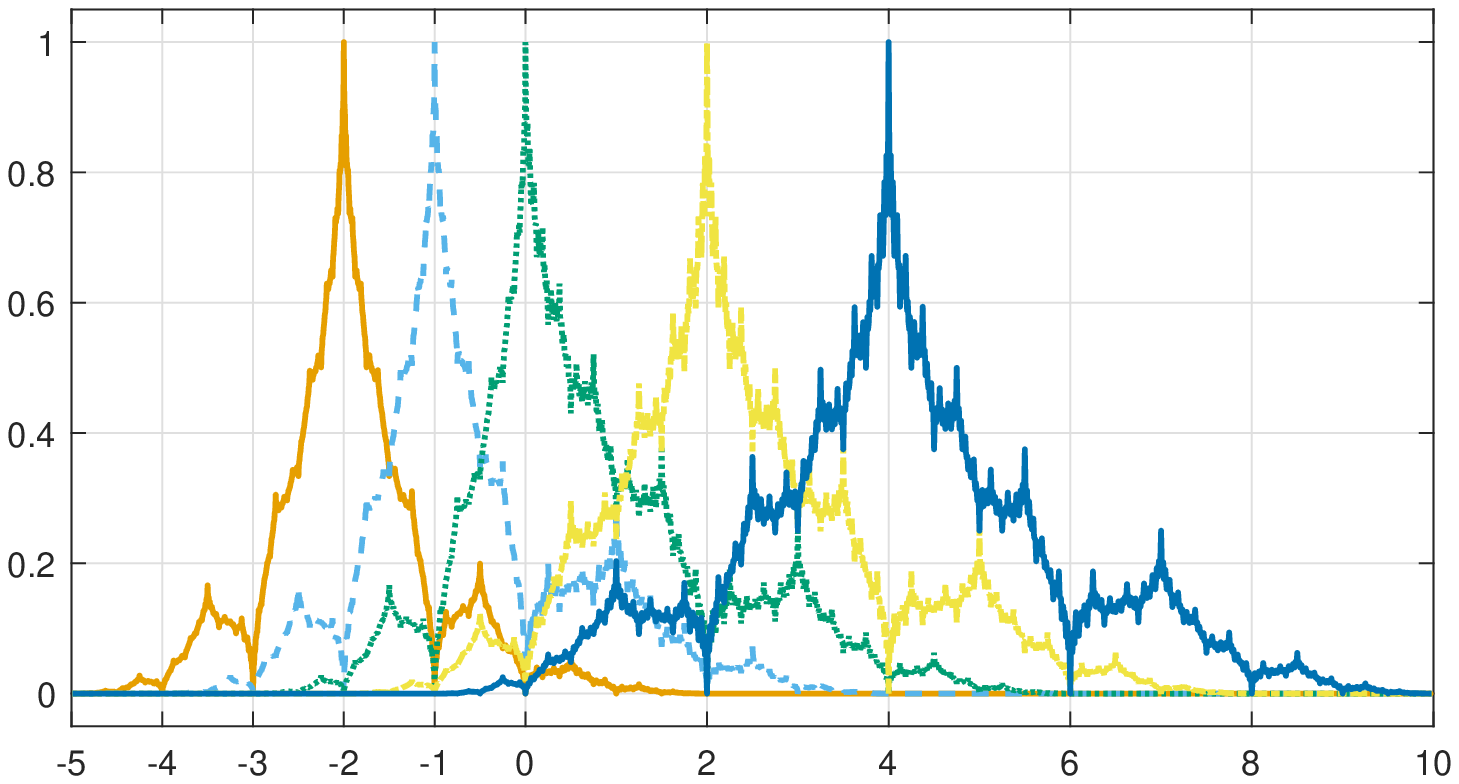} 
\end{minipage}

\begin{minipage}[c]{0.5\textwidth}
{\footnotesize\begin{center}\begin{tabular}{|c|c|c|c|c|c|}
\hline
$n$ & $r_n(\zeta_{-2})$ & $r_n(\zeta_{-1})$ & $r_n(\zeta_{0})$ & $r_n(\zeta_{1})$ & $r_n(\zeta_{2})$\\
\hline
1&    2.3107&    0.0955&   -0.3137&    1.9167&    1.1751\\
2&    1.3713&    0.7568&    0.1025&    1.2398&    0.7305\\
3&    1.0062&    0.5966&    0.2212&    0.9391&    0.5609\\
4&    0.8229&    0.5853&    0.2821&    0.7386&    0.4754\\
5&    0.7185&    0.5437&    0.3146&    0.6290&    0.4269\\
6&    0.6532&    0.5226&    0.3342&    0.5552&    0.3965\\
7&    0.6097&    0.5014&    0.3466&    0.5037&    0.3763\\
8&    0.5793&    0.4860&    0.3550&    0.4664&    0.3622\\
9&    0.5571&    0.4726&    0.3608&    0.4387&    0.3519\\
10&    0.5405&    0.4618&    0.3650&    0.4175&    0.3442\\
11&    0.5278&    0.4525&    0.3681&    0.4010&    0.3383\\
12&    0.5177&    0.4448&    0.3705&    0.3879&    0.3336\\
13&    0.5097&    0.4382&    0.3723&    0.3773&    0.3299\\
14&    0.5032&    0.4325&    0.3737&    0.3686&    0.3269\\
15&    0.4978&    0.4276&    0.3749&    0.3614&    0.3244\\
16&    0.4934&    0.4234&    0.3758&    0.3554&    0.3223\\
\hline
\end{tabular}\end{center}}
\end{minipage}
\begin{minipage}[c]{0.5\textwidth}
{\footnotesize\begin{center}\begin{tabular}{|c|c|c|c|c|c|}
\hline
$n$ & $r^*_n(\zeta_{-2})$ & $r^*_n(\zeta_{-1})$ & $r^*_n(\zeta_{0})$ & $r^*_n(\zeta_{1})$ & $r^*_n(\zeta_{2})$\\
\hline
1&    2.3107&    0.0955&   -0.3137&    1.9167&    1.1751\\
2&    0.4319&    1.4181&    0.5186&    0.5630&    0.2859\\
3&    0.4673&    0.0025&    0.3595&    0.4631&    0.3133\\
4&    0.4549&    0.7000&    0.4071&    0.2370&    0.3029\\
5&    0.4585&    0.3168&    0.3879&    0.3727&    0.3069\\
6&    0.4573&    0.4855&    0.3888&    0.3053&    0.3053\\
7&    0.4577&    0.3743&    0.3860&    0.3059&    0.3059\\
8&    0.4576&    0.4187&    0.3846&    0.3057&    0.3057\\
9&    0.4576&    0.3832&    0.3839&    0.3058&    0.3058\\
10&    0.4576&    0.3957&    0.3831&    0.3057&    0.3057\\
11&    0.4576&    0.3838&    0.3829&    0.3058&    0.3058\\
12&   0.4576&    0.3872&    0.3825&    0.3058&    0.3058\\
13&    0.4576&    0.3832&    0.3824&    0.3058&    0.3058\\
14&    0.4576&    0.3841&    0.3823&    0.3058&    0.3058\\
15&    0.4576&    0.3827&    0.3822&    0.3058&    0.3058\\
16&    0.4576&    0.3829&    0.3822&    0.3058&    0.3058\\
\hline
\end{tabular}\end{center}}
\end{minipage}

$ $\\

\begin{minipage}[c]{0.5\textwidth}
\begin{center}
\includegraphics[width=0.9\textwidth]{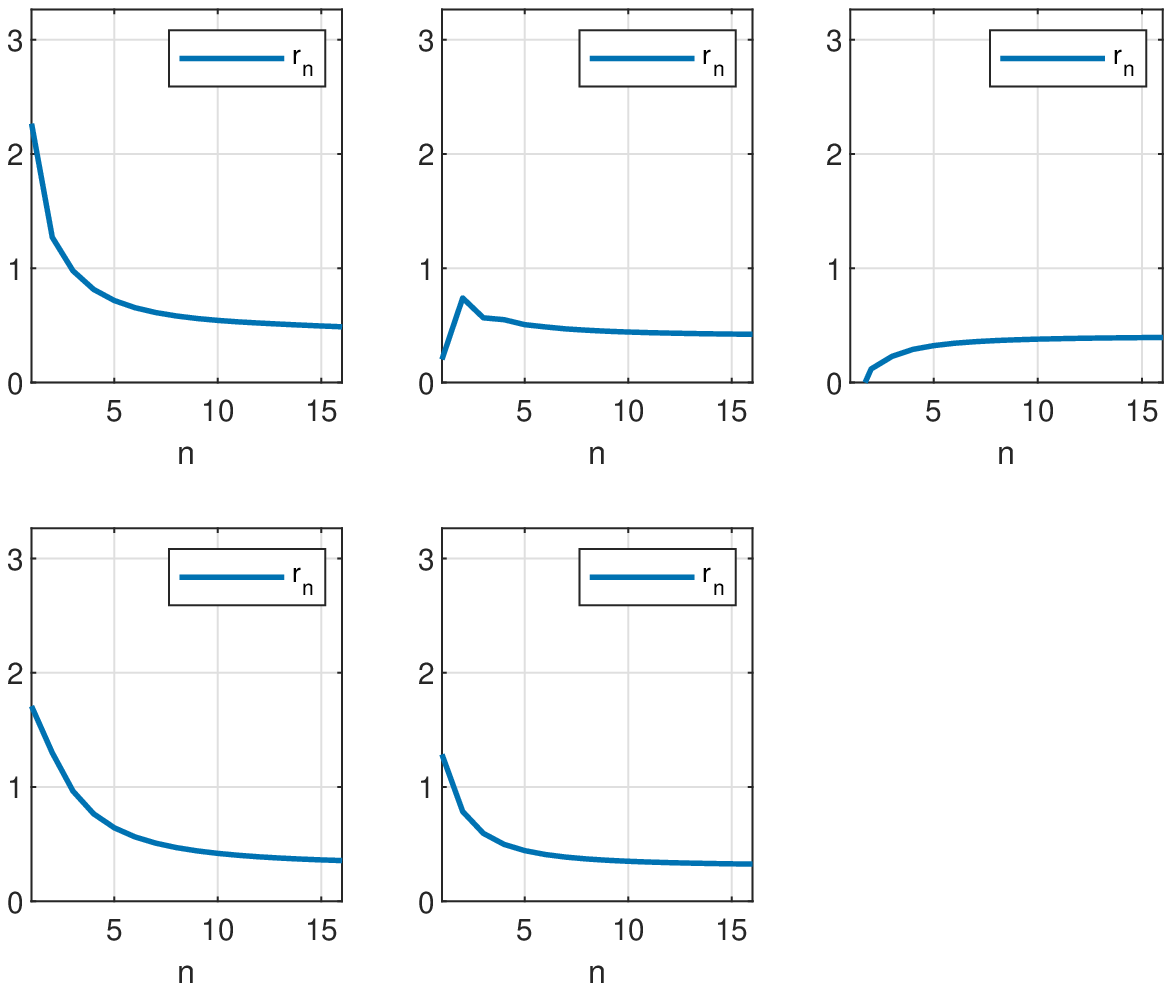}
\end{center}
\end{minipage}
\begin{minipage}[c]{0.5\textwidth}
\begin{center}
\includegraphics[width=0.9\textwidth]{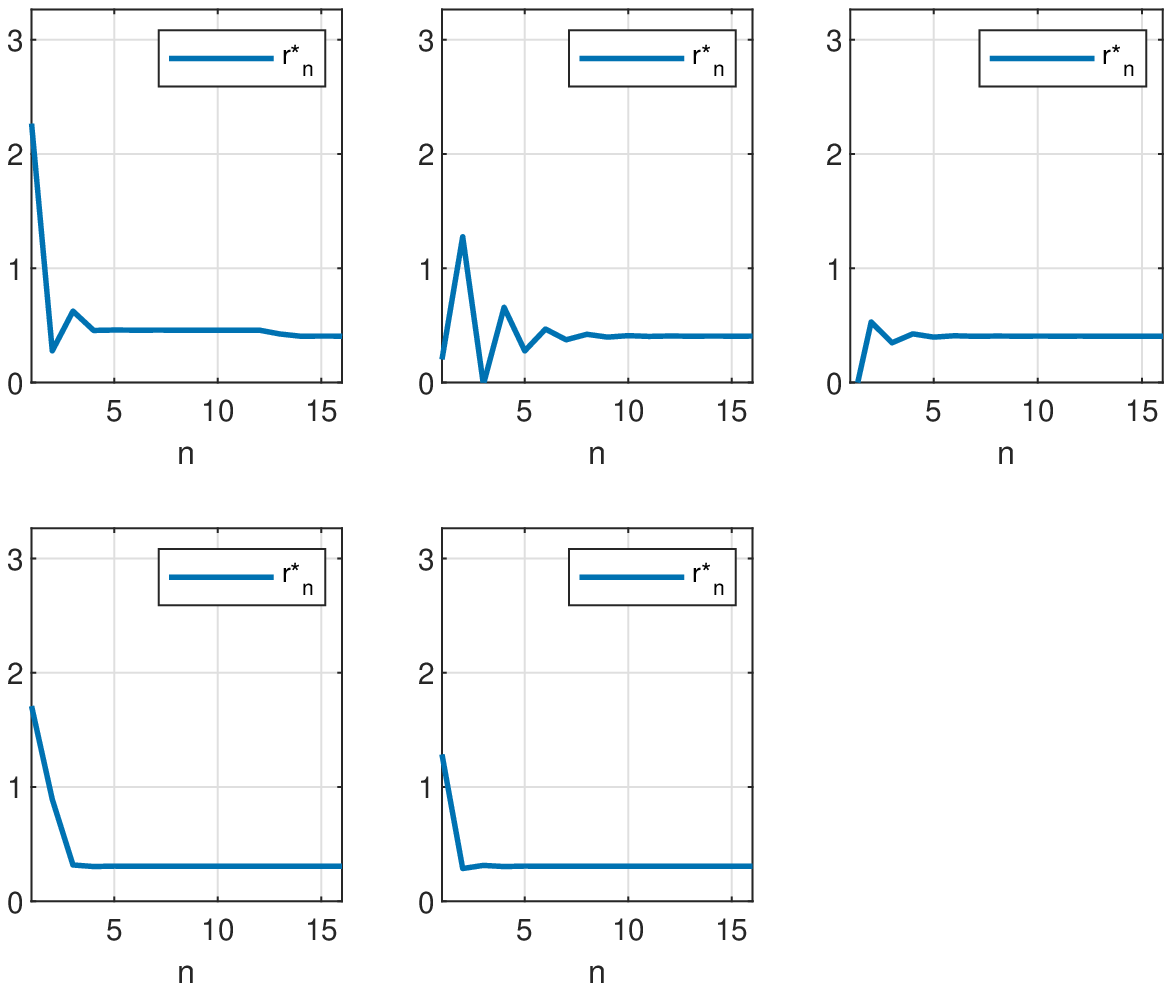} 
\end{center}
\end{minipage}

\caption{Semi-regular interpolatory subdivision scheme based on $g$ in \eqref{eq:BU}, $L=2$, $m=1$, on the mesh $\bf{t}$ with $h_\ell=1$ and $h_r=2$ analyzed via the semi-regular Dubuc-Deslauriers $4$-point tight wavelet frame.  Top row: part of the subdivision matrix $\mathbf{Z}$ that corresponds to the non-shift-invariant refinable functions around $0$ and the graphs of these functions $\zeta_{-2},\dots,\zeta_{2}$. Middle row: estimates of the optimal H\"older-Zygmund exponents 
of $\zeta_{-2},\dots,\zeta_{2}$ via linear regression (on the left) via the method in Proposition \ref{prop:my_test} (on the right).
Bottom row: graphs of the estimates of the H\"older-Zygmund exponents.}
\label{fig:GA_DD4}
\end{figure}

\begin{exmp} Another radial basis function that we consider is the polyharmonic function $g(x)\;=\;|x|$, $x \in \mathbb{R}$. 
The corresponding irregular part of the interpolatory subdivision matrix  $\mathbf{Z}$ is determined for $L=2$ and $m=3$, see Figure~\ref{fig:PS_DD6}.
Note that the regular part of the subdivision matrix $\mathbf{Z}$ (see the first and the last columns corresponding
to the regular parts of the mesh)  coincides with the subdivision matrix of the regular Dubuc-Deslauriers $4$-point scheme. Due to
the observation in Remark \ref{rem:RBF} part $(iii)$, the absence of the blending effect is due to our choice of a homogeneous function $g$. 
We would like to emphasise that the resulting subdivision scheme around $0$ is not the semi-regular Dubuc-Deslauriers $4$-point scheme, compare with Figure \ref{fig:DD4_DD6}. Indeed, the polynomial reproduction around $0$ is of one degree lower. We also lose regularity (the Dubuc-Deslauriers $4$-point scheme is $C^{2-\epsilon}$, $\epsilon>0$) but overall the irregular limit functions on Figure~\ref{fig:PS_DD6} have a more uniform behavior than those on Figure \ref{fig:DD4_DD6}. We again observe that the method in Proposition \ref{prop:my_test} yields better estimates for the optimal H\"older-Zygmund exponent, see tables on Figure~\ref{fig:PS_DD6}.
\end{exmp}

\begin{figure}[h]

\begin{minipage}[c]{0.5\textwidth}
{\scriptsize$\mathbf{Z}(-9:9,-3:3)\;=\;$
%POLYHARMONIC DOUBLE TABLE
\[\left[\begin{array}{ccccccc}
-1/16 \\
0\\ 
9/16&-1/16 \\
  1 &  0 \\
 9/16 &9/16 & -1/16 \\
  0 &  1&     0 \\
 -1/16 &9/16&  9/16& -1/16\\
   &  0&     1&     0 \\
 &-1/24& 19/36& 13/24& -1/36& \\
     &&     0&     1&     0 \\
     &&  -1/9&  7/12& 11/18& -1/12 \\
     &&&     0&     1&     0 \\
     &&& -1/16&  9/16&  9/16 & -1/16\\
     &&&&     0&     1& 0\\
     &&&& -1/16&  9/16 & 9/16\\
     &&&&&     0 & 1\\
     &&&&& -1/16 & 9/16\\
     &&&&&& 0 \\
     &&&&&& -1/16
\end{array}\right].\]}

\end{minipage}
\begin{minipage}[c]{0.5\textwidth}
\centering
\includegraphics[width=0.9\textwidth,height=0.2\textheight]{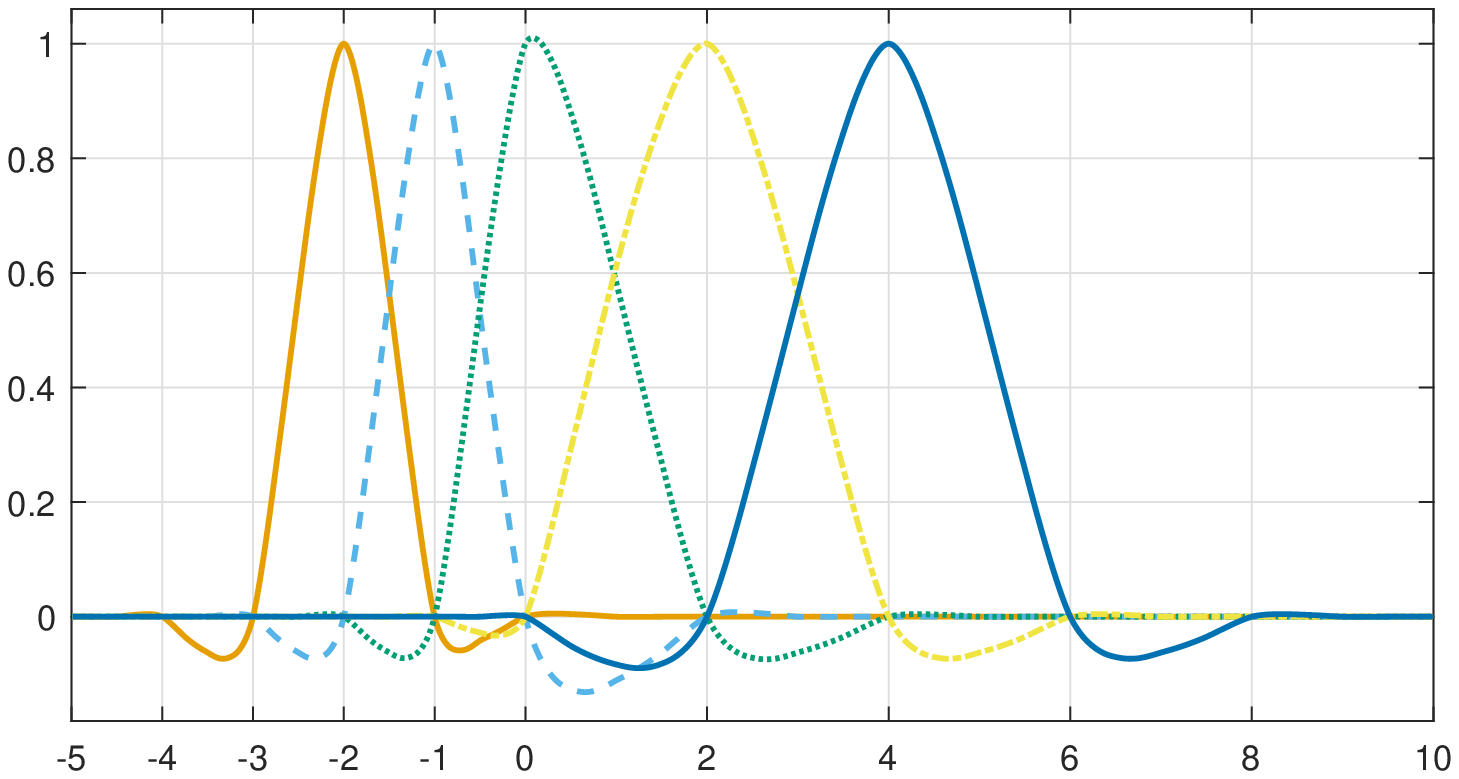} 
\end{minipage}

\begin{minipage}[c]{0.5\textwidth}
{\footnotesize\begin{center}\begin{tabular}{|c|c|c|c|c|c|}
\hline
$n$ & $r_n(\zeta_{-2})$ & $r_n(\zeta_{-1})$ & $r_n(\zeta_{0})$ & $r_n(\zeta_{1})$ & $r_n(\zeta_{2})$\\
\hline
1&   -0.0163&   -0.5174&   -0.3984&   -0.5637&    1.4525\\
2&    1.8110&    0.6301&    0.2509&    0.5646&    1.5324\\
3&    2.3259&    0.9516&    0.7115&    1.0643&    1.6848\\
4&    2.1635&    1.1421&    1.0006&    1.3048&    1.7340\\
5&    2.0346&    1.2779&    1.1895&    1.4400&    1.7552\\
6&    1.9538&    1.3766&    1.3178&    1.5230&    1.7640\\
7&    1.9029&    1.4495&    1.4081&    1.5777&    1.7677\\
8&    1.8695&    1.5043&    1.4740&    1.6155&    1.7692\\
9&    1.8466&    1.5463&    1.5232&    1.6429&    1.7696\\
10&    1.8303&    1.5791&    1.5610&    1.6633&    1.7695\\
11&    1.8183&    1.6050&    1.5906&    1.6789&    1.7693\\
12&    1.8092&    1.6260&    1.6141&    1.6911&    1.7689\\
13&    1.8022&    1.6430&    1.6332&    1.7008&    1.7685\\
14&    1.7967&    1.6571&    1.6488&    1.7087&    1.7681\\
15&    1.7922&    1.6689&    1.6618&    1.7152&    1.7677\\
16&    1.7886&    1.6788&    1.6727&    1.7206&    1.7674\\
\hline
\end{tabular}\end{center}}
\end{minipage}
\begin{minipage}[c]{0.5\textwidth}
{\footnotesize\begin{center}\begin{tabular}{|c|c|c|c|c|c|}
\hline
$n$ & $r^*_n(\zeta_{-2})$ & $r^*_n(\zeta_{-1})$ & $r^*_n(\zeta_{0})$ & $r^*_n(\zeta_{1})$ & $r^*_n(\zeta_{2})$\\
\hline
1&   -0.0163&   -0.5174&   -0.3984&   -0.5637&    1.4525\\
2&    3.6383&    1.7777&    0.9002&    1.6929&    1.6124\\
3&    2.9182&    1.3190&    1.5699&    1.8541&    2.0135\\
4&    0.9991&    1.5830&    1.6965&    1.7672&    1.7787\\
5&    1.5859&    1.7110&    1.7445&    1.7700&    1.7838\\
6&    1.7105&    1.7467&    1.7568&    1.7642&    1.7665\\
7&    1.7468&    1.7580&    1.7612&    1.7638&    1.7649\\
8&    1.7580&    1.7615&    1.7625&    1.7633&    1.7636\\
9&    1.7615&    1.7626&    1.7630&    1.7632&    1.7633\\
10&    1.7626&    1.7630&    1.7631&    1.7632&    1.7632\\
11&    1.7630&    1.7631&    1.7632&    1.7632&    1.7632\\
12&    1.7631&    1.7632&    1.7632&    1.7632&    1.7632\\
13&    1.7632&    1.7632&    1.7632&    1.7632&    1.7632\\
14&    1.7632&    1.7632&    1.7632&    1.7632&    1.7632\\
15&    1.7632&    1.7632&    1.7632&    1.7632&    1.7632\\
16&    1.7632&    1.7632&    1.7632&    1.7632&    1.7632\\
\hline
\end{tabular}\end{center}}
\end{minipage}

$ $\\

\begin{minipage}[c]{0.5\textwidth}
\begin{center}
\includegraphics[width=0.9\textwidth]{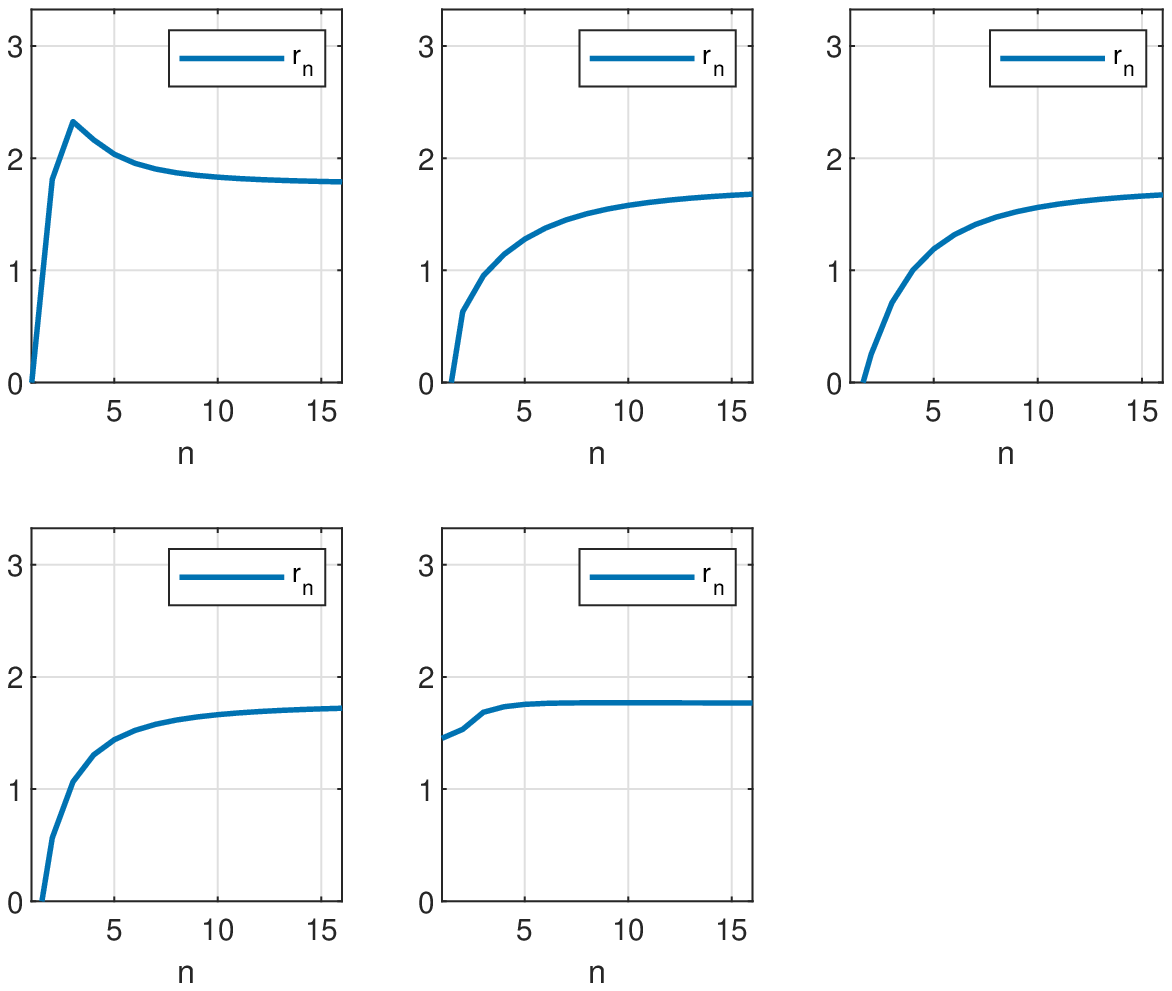} 
\end{center}
\end{minipage}
\begin{minipage}[c]{0.5\textwidth}
\begin{center}
\includegraphics[width=0.9\textwidth]{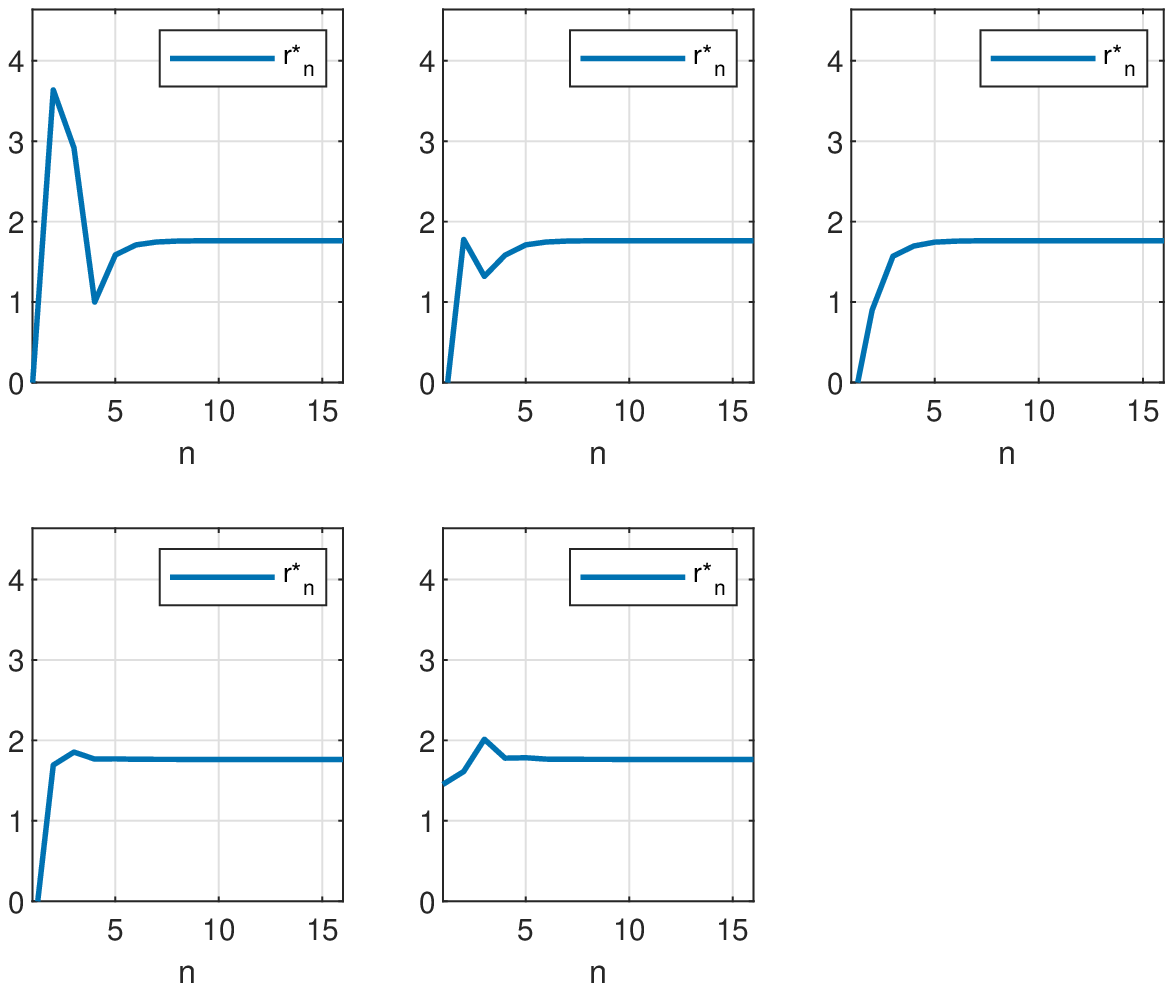}
\end{center}
\end{minipage}

\caption{Semi-regular interpolatory scheme based on the polyharmonic function $g(x)=|x|$, $L=2$ and $m=3$ on the mesh $\bf{t}$  with $h_\ell=1$, $h_r=2$ analyzed with the semi-regular Dubuc-Deslauriers $6$-point scheme.  Top row: part of the subdivision matrix $\mathbf{Z}$ that corresponds to the non-shift-invariant refinable functions around $0$ and the graphs of these functions $\zeta_{-2},\dots,\zeta_{2}$. Middle row: estimates of the optimal H\"older-Zygmund exponents of $\zeta_{-2},\dots,\zeta_{2}$ via linear regression (on the left) via the method in Proposition \ref{prop:my_test} (on the right).
Bottom row: graphs of the estimates of the H\"older-Zygmund exponents.}
\label{fig:PS_DD6}
\end{figure}

Similar results, enlightening the viability of our method,  appear for a large number of other members of the semi-regular families of 
subdivision schemes (i.e. B-splines, Dubuc-Deslauriers and interpolatory schemes based on (inverse) multi-quadrics, gaussians, Wendland's functions, Wu's functions, Buhmann's functions, polyharmonic functions and Euclid's hat functions \cite{MR2357267}). For the interested reader, a MATLAB function for the generation of semi-regular RBFs-based interpolatory schemes is available at \cite{RBFschemes}.

%%%%%%%%%%%%%%%%%%%%%%%%%%%%%%%%%%%%%%%%%%%%%%%%%%%%%%%%%%%%%%%%%%%%%%%%%%%
%%%%%%%%%%%%%%%%%%%%%%%%%%%%%%%%%%%%%%%%%%%%%%%%%%%%%%%%%%%%%%%%%%%%%%%%%%%

$ $\\
{\bf Acknowledgements:} The authors  thank Karlheinz Gr\"ochenig for his fruitful suggestions
and the Erwin Schr\"odinger International Institute for Mathematics and Physics (ESI), Vienna, Austria, for
discussion stimulating environment. Maria Charina
is sponsored by the Austrian Science Foundation (FWF) grant P28287-N35. Costanza Conti, Lucia Romani and Alberto Viscardi
have conducted this research within Research ITalian network on Approximation (RITA).

%%%%%%%%%%%%%%%%%%%%%%%%%%%%%%%%%%%%%%%%%%%%%%%%%%%%%%%%%%%%%%%%%%%%%%%%%%%%
\section*{References}
%%%%%%%%%%%%%%%%%%%%%%%%%%%%%%%%%%%%%%%%%%%%%%%%%%%%%%%%%%%%%%%%%%%%%%%%%%%%

\bibliographystyle{siam}
\bibliography{charina_conti_romani_stoeckler_viscardi}

\end{document}